\DeclareMathOperator{\Cov}{Cov}
\DeclareMathOperator{\Deriv}{D}
\DeclareMathOperator{\Dom}{dom}
\DeclareMathOperator{\Entropy}{\mathcal H}
\DeclareMathOperator{\Expectation}{\mathbb E}
\DeclareMathOperator{\Hessian}{Hess}
\DeclareMathOperator{\Supp}{Supp}
\DeclareMathOperator{\Var}{Var}
\DeclareMathOperator{\diag}{diag}
\newcommand{\KL}[2]{\operatorname{D}\left(#1\,\Vert#2\right)}
\newcommand{\R}{\mathbb{R}}
\newcommand{\affineof}[1]{\operatorname{Affine}\left(#1\right)}
\newcommand{\be}{\bm e}
\newcommand{\btheta}{\bm{\theta}}
\newcommand{\cosof}[1]{\cos\left(#1\right)}
\newcommand{\cossqof}[1]{\cosof{#1}}
\newcommand{\covat}[3]{\Cov_{#1}\left(#2,#3\right)}
\newcommand{\dderivby}[1]{\frac{d^2}{d{#1}^2}}
\newcommand{\derivby}[1]{\frac{d}{d#1}}
\newcommand{\diagof}[1]{\diag\left(#1\right)}
\newcommand{\domof}[1]{\Dom\left(#1\right)}
\newcommand{\eacc}{\prescript{e}{}\Deriv^2}
\newcommand{\ehessianat}[2]{\prescript{e}{} {\Hessian}_{#1} {#2}}
\newcommand{\entropyof}[1]{\Entropy\left(#1\right)}
\newcommand{\epiof}[1]{\operatorname{epi}\left(#1\right)}
\newcommand{\etransport}[2]{\prescript{\text{e}}{} {\mathbb U} _ {#1} ^ {#2}}
\newcommand{\euler}{\mathrm{e}}
\newcommand{\expectat}[2]{\Expectation_{#1}\left[#2\right]} 
\newcommand{\expof}[1]{\exp\left(#1\right)}
\newcommand{\grad}{\operatorname{grad}}
\newcommand{\logof}[1]{\log\left(#1\right)}
\newcommand{\macc}{\prescript{m}{}\Deriv^2}
\newcommand{\mhessianat}[2]{\prescript{m}{} {\Hessian}_{#1} {#2}}
\newcommand{\mtransport}[2]{\prescript{\text{m}}{} {\mathbb U} _ {#1} ^ {#2}}
\newcommand{\nonnegreals}{\R_{\ge 0}}
\newcommand{\normat}[2]{\left\Vert#2\right\Vert_{#1}}
\newcommand{\normof}[1]{\left\Vert#1\right\Vert}
\newcommand{\one}{\bm 1}
\newcommand{\osimplexof}[1]{\Delta^\circ\left(#1\right)}
\newcommand{\partiald}[2]{\frac{\partial}{\partial #1} #2}
\newcommand{\pderivby}[1]{\frac {\partial} {\partial #1}}
\newcommand{\saffineof}[1]{S\operatorname{Affine}\left(#1\right)}
\newcommand{\scalarat}[3]{\left\langle#2,#3\right\rangle_{#1}}
\newcommand{\scalarof}[2]{\left\langle#1,#2\right\rangle}
\newcommand{\setof}[2]{\left\{#1 \, \middle| \, #2 \right\}}
\newcommand{\set}[1]{\left\{#1\right\}}
\newcommand{\simplexof}[1]{\Delta\left(#1\right)}
\newcommand{\sinof}[1]{\sin\left(#1\right)}
\newcommand{\sinsqof}[1]{\sinof{#1}}
\newcommand{\smallo}{\operatorname{o}}
\newcommand{\suppof}[1]{\Supp\left(#1\right)}
\newcommand{\transport}[2]{\prescript{0}{} {\mathbb U} _ {#1} ^ {#2}}
\newcommand{\varat}[2]{\Var_{#1}\left(#2\right)}
\theoremstyle{plain}
\newtheorem{theorem}{Theorem}
\newtheorem{proposition}{Proposition}
\theoremstyle{definition}
\newtheorem{definition}{Definition}
\theoremstyle{remark}
\begin{document}

\title{Information Geometry of the Probability Simplex: A Short Course}

\author{Giovanni Pistone}
\email[]{giovanni.pistone@carloalberto.org}
\homepage[]{https://www.giannidiorestino.it/}

\thanks{Lectures notes after the 6th Ph.D. School/Conference on
Mathematical Modeling of Complex Systems
Universit\`a ``G. d’Annunzio'', Pescara (IT), July 3--11, 2019. GP is supported by de Castro Statistics and Collegio Carlo Alberto and is a member of INdAM--GNAMPA}

\affiliation{de Castro Statistics, Collegio Carlo Alberto, Piazza Vincenzo Arbarello 8, 10122 Torino, Italy}


\begin{abstract}
This set of notes is intended for a short course aiming to provide an (almost) self-contained and (almost) elementary introduction to the topic of Information Geometry (IG) of the probability simplex. Such a course can be considered an introduction to the original monograph by \citet{amari|nagaoka:2000}, and to the recent monographs by \citet{amari:2016} and by \citet*{Ay|Jost|Le|Schwachhofer:2017IGbook}. The focus is on a non-parametric approach, that is, I consider the geometry of the full probability simplex and compare the IG formalism with what is classically done in Statistical Physics.  
\end{abstract}


\maketitle

\section{Introduction}
\label{sec:introduction}

The key Amari's contribution to our subject has been convincingly showing that Fisher-Rao Riemannian structure on the probability simplex is just one among the geometric structures of interest. In fact, Amari describes the geometry of the probability simplex as an affine space with a natural system of dually flat connections. My aim here is to present Amari's ideas while avoiding the use of parametric differential geometry, my point being that such presentation better reveals the substantial connection with standard arguments in Boltzmann-Gibbs theory as introduced, for example, in \citet[Ch.~I-III]{landau|lifshits:1980}. A second positive effect of the non-parametric approach is to provide a better preparation for interesting generalization, namely, infinite sample space, deformed logarithmic representation, Wasserstein geometry. In the text, I am freely using material from a number of papers that followed \citet{pistone|sempi:95}.

I will not consider here any specific application. In fact, the presentation is limited to the consideration of the basic formalism. If Chance is to be accepted as a real object, as are Space, Time, Space-Time, \dots, then something like IG should be the mathematics of Chance, in the same sense Cartesian Geometry is a mathematics of classical Space.

A statistical model is a parametrised set of probability functions. The point of view of Information geometry (IG) is that a statistical model must be viewed as a sub-manifold of a manifold on all probability functions. This statement requires a number of qualification to be technically feasible. Let us start by considering a few basic examples. 

\paragraph*{1)} On the sample space of two binary trial, $\Omega = \set{0,1}^2$, the set of all possible probability functions is the probability simplex $\Delta(\Omega)$. It is a convex set whose dimension is 3, conveniently represented as 2-way table with elements $p(x,y) \ge 0$, $x,y = 0,1$, $\sum p(x,y) = 1$.

\paragraph*{2)} The model of two independent identically distributed binary trials is a 1-parameter model that can be seen as a curve in the probability simplex of \emph{1)}. One possible parametrization is
\begin{equation*}
[0,1] \ni \theta \mapsto p(x,y;\theta) = ((1-\theta)x  + \theta x)  ((1-\theta)y  + \theta y) \ .
\end{equation*}
Another one is $\theta \mapsto (1-\theta)^{1 - x - y} \theta^{x+y}$. 

\paragraph*{3)} The model of two independent binary trial has 2 independent parameters and can be seen as a surface in the probability simplex. The quadratic homogeneous equation $p(0,0)p(1,1) = p(0,1)p(1,0)$ defines the model as a semi-algebraic surface. It is a ruled surface that can be parametrized on the unit square by
\begin{equation*}
[0,1] \ni (\theta_1,\theta_2) \mapsto ((1-\theta_1)^{1-x}\theta_1^x)((1-\theta_2)^{1-y}\theta_2^y) \ .
\end{equation*}

\paragraph*{4)} The set of all probability functions of the interior of the simplex of \emph{1)} with a given entropy, $\entropyof{p} = - \sum_{x,y} p(x,y) \log p(x,y) = \text{const}$, is a surface of dimension 2.

IG provides the tools for discussing in a geometric language a number of interesting problems about the examples above. For examples, I will define at each point of the open simplex an inner product such that the trajectories that are orthogonal to the surfaces of equal entropy are Gibbs models. 

The language of IG is the language of differential geometry. All the IG monograph quoted above contain a short introduction to differential geometry. Non-parametric presentations of differential geometry can be found in \citet{lang:1995} and \citet{klingenberg:1995}. In these approaches, one the model space (coordinates space) can be any Banach space and different charts of the atlas are not required to have the same image space.

\section{Calculus on the simplex}
\label{sec:recap-convexity}

Convex analysis is a relevant topic in IG. Standard references are the monographs \citet{rockafellar:1970} and \citet{barvinok:2002}. Find below a short review of what is needed for IG.

A subset $H$ of a vector space $V$ is an \emph{affine space} if $TH = \setof{x-y}{x,y \in H}$ is a sub-vector space of $V$. $TH$ is called the vector subspace parallel to $H$ (or tangent to $H$).

Our main example is $V = \R^n$ and $H = \setof{x \in \R^n}{\one^tx = 1}$. If $x,y \in H$, then $\one^t(x-y) = 0$. Conversely, if $\one^t z = 0$, then $z = (z + \be_1) - \be_1$, hence $TH = \setof{z \in \R^n}{\one^t z = 0}$.

The dimension of the affine space $H$ is the dimension of its parallel vector subspace. Given $x_0,\dots,x_n \in V$, the set of all vectors of the form $x_0 + \sum_{j=1}^n \lambda_j x_j$, $\lambda_j \in \R$, is the affine space generated by the given vectors. An affine space of dimension $(n-1)$ in $\R^n$ is an \emph{hyper-plane}.

A subset $C$ of the vector space $V$ is \emph{convex} if for all $x,y \in C$ the segment $(1-\lambda)x + \lambda y$, $\lambda \in [0,1]$ is in $C$. The intersection of two convex sets is clearly convex. Given $x_0,\dots,x_n \in V$ the set of all $\lambda_0 x_0 + \cdots +\lambda_n x_n$ with $\lambda_0 + \cdots + \lambda_n = 1$ is the convex set generated by the given vectors. Such a set is called a \emph{polytope} (or convex polytope). Notice that $\sum_{j=0}^n \lambda_j x_j = (1 - \sum_{j=1}^n \lambda_j) x_0 + \sum_{j=1}^n \lambda_j x_j = x_0 + \sum_{j=1}^n \lambda_j(x_j-x_0)$ that is, the polytope is a part of the affine space generated.

A notable example of convex set is the \emph{half-space} of $v \in V$ such that $\scalarof c v \leq b$ with $c \in V$ and $b \in \R$. A finite intersection of half-spaces is a convex set called a \emph{polyhedron}.

The vectors $x_0,\dots,x_m$ are \emph{affinely independent} if the vectors $x_1-x_0,\dots,x_m-x_0$ are linearly independent. They form a vector basis of the sub-space parallel to the generated polytope which in this case is called a \emph{simplex}. Two simplexes of the same dimension can be mapped one onto the other by an affine transformation that map their respective generators (the vertexes).    

For example, the probability simplex $\Delta(\set{1,2,3})$ and its graphical representation as an equilateral triangle are well known in statistics.

\paragraph*{Example} Let us define more formally the example already used, the probability simplex on $\set{0,1}^2$. Let $V$ be the vector space of real functions on $\set{0,1}^2$, $\R^{\set{0,1}^2} \simeq \R^4$. The four functions $\delta_{ij} = \delta_i \otimes \delta_j$, $i,j=0,1$, are linearly independent, in particular, affinely independent. The convex set generated is the probability simplex $ \simplexof{\set{0,1}^2} =$
  \begin{equation*}
\setof{\sum_{i,j=0,1} p(i,j) \delta_{i,j}}{p(i,j) \geq 0, \sum_{i,j=0,1} p(i,j) = 1} \ .    
\end{equation*}

\paragraph*{Exercise}
Any other set of 4 affinely independent vectors can be used to represent the same probability simplex. For example, the 4 vertexes in $\R^3$ of the tetrahedron are affinely independent,
\begin{equation*}\small
  \begin{array}{cccccc}
    ij & \theta & \phi & x & y & z \\
    \hline
    00 & 0 & 0 & \sinof{0} \cosof{0} & \sinof{0} \cosof{0} & \cosof{0}\\
    01 & \frac23\pi & 0 & \sinof{\frac23\pi} \cosof{0} & \sinof{\frac23\pi} \cosof{0} & \cosof{\frac23\pi}\\
    10 & \frac23\pi & \frac23\pi & \sinof{\frac23\pi} \cosof{\frac23\pi} & \sinof{\frac23\pi} \cosof{\frac23\pi} & \cosof{\frac23\pi}\\
    11 & \frac23\pi & \frac43\pi & \sinof{\frac23\pi} \cosof{\frac43\pi} & \sinof{\frac23\pi} \cosof{\frac43\pi} & \cosof{\frac23\pi}
  \end{array}
\end{equation*}

The possibly most common representation uses the 0 vector together with the vectors of the standard basis, that is the set $\set{p(0,1)\delta_{01}+p(1,0)\delta_{10}+p(11)\delta_{11}}$ with conditions $p(i,j) \geq 0$,  $p(0,1)+p(1,0) + p(1,1) \leq 1$ \ .

\bigskip
Let us recall two basic results about convex sets.

\begin{theorem}
Let $K$ be a convex set of the finite dimensional vector space $V$. Assume $K$ is closed, $K = \overline K$, and its interior is not empty, $K^\circ \neq \emptyset$. Let $x$ be a point of the boundary, $x \in \partial K = \overline K - K^\circ$. The there exist $b \in \R$ and $A \in L(V)$, such that the affine function $h = A + b$ \emph{supports $K$ at $x$}, namely $h(x) = 0$ and $h(y) \geq 0$ if $y \in K$.
\end{theorem}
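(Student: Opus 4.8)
The plan is to prove this as the classical supporting hyperplane theorem, using the nearest-point (metric projection) method, which is particularly clean in finite dimension. Fix an inner product $\scalarof{\cdot}{\cdot}$ on $V \simeq \R^n$ with associated norm $\normof{\cdot}$; I will produce the supporting functional as a limit of outward normals.

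\textbf{Step 1 (projection lemma).} First I would establish that for every $z \in V$ there is a unique nearest point $\pi(z) \in K$ realizing $\normof{z - \pi(z)} = \inf_{y \in K}\normof{z-y}$. Existence follows because $y \mapsto \normof{z-y}$ is continuous and its minimization can be restricted to the intersection of $K$ with a large closed ball, which is compact; uniqueness follows from strict convexity of the Euclidean norm via the parallelogram law. The crucial output is the variational inequality: since moving from $\pi(z)$ toward any $y \in K$ along the segment cannot decrease the distance, one gets $\scalarof{z - \pi(z)}{y - \pi(z)} \leq 0$ for all $y \in K$.

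\textbf{Step 2 (approach $x$ from outside).} Since $x \in \partial K = \overline K - K^\circ$ and $K = \overline K$ is closed, we have $x \in K$ but $x \notin K^\circ$, so there is a sequence $z_k \to x$ with $z_k \notin K$. Put $u_k = (z_k - \pi(z_k))/\normof{z_k - \pi(z_k)}$, which is well defined because $z_k \notin K$ forces $z_k \neq \pi(z_k)$. Step 1 yields $\scalarof{u_k}{y - \pi(z_k)} \leq 0$ for all $y \in K$. Moreover, because $x \in K$, we have $\normof{z_k - \pi(z_k)} \leq \normof{z_k - x} \to 0$, whence $\pi(z_k) \to x$ as well.

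\textbf{Step 3 (pass to the limit).} The vectors $u_k$ all lie on the unit sphere of $V$, which is compact in finite dimension, so along a subsequence $u_k \to u$ with $\normof{u} = 1$. Letting $k \to \infty$ in $\scalarof{u_k}{y - \pi(z_k)} \leq 0$ gives $\scalarof{u}{y - x} \leq 0$ for every $y \in K$. I then set $h(y) = \scalarof{u}{x} - \scalarof{u}{y}$, an affine function whose linear part is $y \mapsto -\scalarof{u}{y} \in L(V)$ and whose constant is $b = \scalarof{u}{x}$. By construction $h(x) = 0$ and $h(y) \geq 0$ for all $y \in K$, which is exactly the support property; the functional is nontrivial since $\normof{u} = 1$.

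The hard part will be Step 1 together with the compactness passage of Step 3, and it is precisely here that the two hypotheses are consumed. Finite-dimensionality makes both the nearest-point minimization (compactness of closed bounded sets) and the extraction of a limiting \emph{unit} normal (compactness of the sphere, so the limit does not degenerate to $0$) work. Closedness of $K$ is what guarantees $x \in K$ and $\pi(z_k) \to x$, letting me evaluate $h$ at $x$; and $K^\circ \neq \emptyset$ is what ensures $x$ is genuinely approachable by exterior points, so that the construction is not vacuous and the supporting functional is nontrivial.
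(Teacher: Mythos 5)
Your proof is correct and complete, but note that the paper never actually proves this theorem: its ``proof'' is a bare citation to Barvinok, \S II.1--2. Your metric-projection argument is the standard self-contained route in finite dimension and supplies exactly what the text omits: the variational inequality $\scalarof{z - \pi(z)}{y - \pi(z)} \leq 0$ from Step 1 is right (the segment from $\pi(z)$ to $y$ stays in $K$ by convexity, so the squared distance is minimized at $t=0$), the bound $\normof{z_k - \pi(z_k)} \leq \normof{z_k - x}$ correctly forces $\pi(z_k) \to x$, and compactness of the unit sphere delivers a genuine unit normal so the limit functional cannot degenerate. The alternative the paper gestures at is its later Isolation Theorem (strict separation of an open convex set from a boundary point, also cited to Barvinok), from which the present statement follows by applying it to $K^\circ$ and passing to the closure; your approach buys independence from that theorem and, incidentally, does not even need uniqueness of the projection --- existence plus the variational inequality suffice, so the parallelogram-law step is dispensable.

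One correction to your closing commentary about which hypotheses are consumed: $K^\circ \neq \emptyset$ is not what makes $x$ approachable by exterior points. That follows directly from $x \notin K^\circ$ --- by definition of the interior, no neighborhood of $x$ lies inside $K$, so every ball around $x$ meets the complement, and the sequence $z_k$ exists regardless. Nor is nontriviality of $h$ tied to the interior hypothesis; it comes from $\normof{u} = 1$ via sphere compactness. In fact your argument proves the conclusion even when $K^\circ = \emptyset$ (a convex set with empty interior in finite dimension is a proper subset of $V$, so exterior points exist). The hypothesis $K^\circ \neq \emptyset$ is needed for the Isolation-Theorem route and for infinite-dimensional versions, but your proof never uses it; saying it is ``consumed'' in Step 2 misattributes the logic, though it does not affect correctness.
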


\begin{proof}
  \cite[\S II.1-2]{barvinok:2002}
\end{proof}

\begin{theorem}
Every polytope is a polyhedron and every bounded polyhedron is a polytope.  
\end{theorem}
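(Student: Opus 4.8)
This is the Minkowski--Weyl theorem, and the plan is to establish the two implications separately.

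\textbf{Polytope $\Rightarrow$ polyhedron.} Let $P = \operatorname{conv}\set{x_0,\dots,x_n}$. The idea is to realise $P$ as the image, under the linear projection $(x,\lambda)\mapsto x$, of the set
\[
Q = \setof{(x,\lambda)\in V\times\R^{n+1}}{\lambda_j\ge 0,\ \textstyle\sum_{j}\lambda_j = 1,\ x = \textstyle\sum_j \lambda_j x_j},
\]
which is a polyhedron because each scalar equality is a pair of opposite $\le$ inequalities. The work then reduces to one lemma: the coordinate projection of a polyhedron is again a polyhedron. I would prove this by \emph{Fourier--Motzkin elimination} of one variable at a time. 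Given a finite system of inequalities and a variable $t$ to eliminate, split the inequalities into those in which $t$ has positive, negative, or zero coefficient; keep the zero-coefficient ones unchanged, and replace every positive/negative pair by the single inequality obtained by cancelling $t$. One checks that a choice of the remaining coordinates satisfies the reduced system if and only if some value of $t$ completes it to a solution of the original system, i.e.\ the reduced system describes exactly the projection. Eliminating $\lambda_0,\dots,\lambda_n$ in turn leaves a finite system in $x$ alone whose solution set is $P$, so $P$ is a polyhedron.

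\textbf{Bounded polyhedron $\Rightarrow$ polytope.} Here I would induct on the dimension $d$ of the affine hull of $P$. After restricting all defining half-spaces to that affine hull I may assume $P$ is full-dimensional in an affine space of dimension $d$; being a finite intersection of closed half-spaces it is closed, and it is bounded, hence compact. The cases $d\le 1$ are immediate (a point, or a segment $\operatorname{conv}\set{a,b}$). For the inductive step write $P=\bigcap_{i=1}^m H_i$ with $H_i=\setof{v}{h_i(v)\ge 0}$ for affine $h_i$, and set $F_i = P\cap\setof{v}{h_i(v)=0}$. Each $F_i$ sits in a hyperplane, so it is a bounded polyhedron in an affine space of dimension $d-1$, and by the induction hypothesis $F_i=\operatorname{conv}(S_i)$ for a finite set $S_i$. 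Put $S=\bigcup_i S_i$; I claim $P=\operatorname{conv}(S)$. The inclusion $\operatorname{conv}(S)\subseteq P$ is clear. Conversely, if $x\in\partial P$ then some constraint is tight (otherwise every $h_i(x)>0$ and $x$ would be interior), so $x\in F_i\subseteq\operatorname{conv}(S)$ for some $i$; and if $x$ is interior, any line through $x$ meets the compact set $P$ in a nondegenerate segment whose two endpoints lie on $\partial P$, exhibiting $x$ as a convex combination of two points of $\operatorname{conv}(S)$. In either case $x\in\operatorname{conv}(S)$, so $P$ is a polytope. The supporting-hyperplane theorem above enters to guarantee that the boundary is genuinely covered by the facets $F_i$.

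\textbf{Main obstacle.} The routine-looking but delicate points are, in the first part, the sign bookkeeping in the elimination step and the proof that the reduced system describes \emph{exactly} the projection (both inclusions); and in the second part, the verification that each facet $F_i$ really lies in an affine space of strictly smaller dimension, so that the induction is well founded, together with the degenerate situations where some $F_i$ is empty or lower-dimensional and the initial reduction to the affine hull when $P$ is not full-dimensional.
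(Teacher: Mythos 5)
Your plan is correct, and it is worth noting at the outset that the paper does not actually prove this statement: its ``proof'' is a pointer to \S II.3 of Barvinok's \emph{A Course in Convexity}, so the comparison is with that source rather than with an argument in the text. Your route is the classical Minkowski--Weyl proof: for polytope $\Rightarrow$ polyhedron you lift to the simplex of weights and invoke the lemma that a coordinate projection of a polyhedron is a polyhedron, proved by Fourier--Motzkin elimination (your description of the elimination step, including the case where only positive or only negative coefficients of $t$ occur, is the right one); for the converse you induct on the dimension of the affine hull, sending boundary points into facets and writing interior points as combinations of two boundary points. Barvinok's cited treatment goes a different way: a bounded polyhedron is compact and has only finitely many extreme points (each extreme point is pinned down by a subsystem of tight constraints), so it is the convex hull of a finite set by the finite-dimensional Krein--Milman theorem, while the polytope $\Rightarrow$ polyhedron direction is obtained via polar duality, which exchanges the two descriptions. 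The trade-off: your elimination argument is elementary, constructive, and algorithmic --- it needs no separation or compactness machinery in the first half and works verbatim over any ordered field --- whereas the extreme-point/polarity route yields structural by-products (vertices, faces, the polar correspondence) that the elimination proof leaves invisible, and those by-products are closer in spirit to the convex-geometric toolkit the paper assembles around this theorem. Two small repairs to your sketch, neither fatal: the closing appeal to the supporting-hyperplane theorem is unnecessary, since your own tightness argument (``if every $h_i(x)>0$ then $x$ is interior'') already covers the boundary; and in the induction you should discard constraints with constant $h_i$ (so that each $\set{v}{h_i(v)=0}$ is a genuine hyperplane) and run strong induction on the dimension of the affine hull, since a nonempty $F_i$ may well have dimension strictly less than $d-1$ --- points you essentially flag yourself under ``main obstacle.''
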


\begin{proof}
  \cite[\S II.3]{barvinok:2002}
\end{proof}

At this point, I recap the basic notations of the affine geometry of the probability simplex. Let $\lambda$ be a probability function on $\Omega$. As $\lambda \in \R^\Omega$, one can write $\lambda = \sum_{x \in \Omega} \lambda(x) \delta_x$, so that the set $\Delta(\Omega)$ is the convex set generated by the probability functions associated to the Dirac probability measures. Let us code $\Omega$ as $\set{1,\dots,N}$ and write $\lambda= \sum_{j=1}^n \lambda_j e_j$. The vectors $e_j - e_m$, $j=1,\dots,N-1$ are linearly independent so that $\Delta(\Omega)$ is a special simplex which is called the \emph{probability simplex}. The parallel vector space is the vector space of the vectors of the form $\sum_{j=1}^n \alpha_j (e_j-e_1)$ that is of the form $\sum_{j=1}^n \alpha_j e_j$ with $\sum_{j=1}^n \alpha_j =0$. These are the vectors which are orthogonal to the constant vectors.

The set of probability functions with support $\Omega_1 \subset \Omega$ form a simplex of dimension $\# \Omega_1 -1$. If $\#\Omega_1 = n-1$ this sub-simplex is a \emph{face} of $\Delta(\Omega)$.

There is another simplex that represents the probability simplex $\Delta(\Omega)$ namely, the \emph{solid probability simplex}. In fact, one can represent a probability function by its $n-1$ values $\lambda_j,\dots,\lambda_{n-1}$ which form a vector in $\R^{n-1}$ satisfying the conditions $\lambda_j \geq 0$ and $\sum_{j=1}^{n-1} \lambda_j \leq 1$. The vectors $e_1,\dots,e_{n-1},0 \in \R^{n-1}$ are affinely independent and generate a simplex of dimension $n-1$ as $\sum_{J=1}^{n-1} \lambda_j e_j + \lambda_n 0$. The mapping between the two representations is given by $\R^n \ni e_j \mapsto e_j \in \R^{n-1}$ for $j=1,\dots,n-1$ and $\R^n \ni e_n \mapsto 0 \in \R^{n-1}$.

\bigskip
Let us turn to the calculus on the simplex. Let $f \colon \mathcal O \to \R^n$, where $\mathcal O$ is an open sub-set of $\R^m$. The function is differentiable at $\bar x \in \mathcal O$ if there exists a linear mapping $df(\bar x) \in L(\R^m,\R^n)$ such that
\begin{equation*}
f(\bar x + h) - f(\bar x) - df(\bar x)[h] = \smallo(h) \ .
\end{equation*}
The matrix representing the linear operator $df(\bar x)$ is called the Jacobian matrix of $f$, $Jf(\bar x)$, whose elements are the partial derivatives
\begin{equation*}
  Jf(\bar x) =
  \begin{bmatrix}
    \frac{\partial}{\partial x_j} f_i(x_1,\dots,x_n)
  \end{bmatrix}_{i=1,\dots,n; j=1,\dots m} \ .
\end{equation*}

The derivative of the composite function $f \circ g$ at $x$ is $df\circ g(x) = df(g(x)) \circ dg(x)$.

\paragraph*{Example} Here is a fundamental remark. \emph{Let $I \ni \theta \mapsto \lambda(\theta)$ be a curve which stays in the probability simplex $\Delta(\Omega)$ and which is differentiable in $\R^\Omega$.} The derivative
\begin{equation*}
\lambda'(\theta) = \lim_{h \to 0} h^{-1} (\lambda(\theta+h) - \lambda(\theta)
\end{equation*}
belongs to the subspace parallel to the simplex. If $\lambda(\bar \omega;\bar \theta) = 0$, then the real differentiable function $\theta \mapsto \lambda(\bar\omega,\theta)$ has a minimum at $\theta=\bar\theta$, so that $\lambda'(\bar\omega,\bar\theta)=0$ and $\lambda'(\bar\theta)$ belong to the space parallel to the face of the simplex characterised by $\lambda(\bar\omega) = 0$. In the language of measure theory, \emph{$\lambda'(t)$ is absolutely continuous with respect to $\lambda(t)$, that is, there exists a curve $t \mapsto s(t)$ such that $\lambda'(x;t) = s(x;t) \lambda(x;t)$ for all $x$ and $t$.}  Notice that, if $\lambda(x;t)$ stays positive in some time interval, then one can take $s(x;t) = \derivby t \log \lambda(x;t)$ on that interval.

\paragraph*{Exercise.} The entropy $\entropyof \lambda = -\sum_{\omega} \lambda(\omega) \log \lambda(\omega)$ is defined on the convex set $\Delta^\circ(\Omega)$, $\# \Omega = N$, of strictly positive probability functions. As $\phi(x) = - x\log x$, $x > 0$,  is concave,
  \begin{equation*}
    \frac1{N} \entropyof\lambda) = \frac 1 {N} \sum_{\omega\in\Omega} \phi(\lambda(\omega)) \leq \phi\left(\frac 1 {N} \sum_{\omega\in\Omega}\right) = \phi\left(\frac1{N}\right) \ , 
  \end{equation*}
and the uniform probability function is a maximum of the entropy. Let us show that this maximum is unique. Assume there is a $\bar \lambda$ which is a maximum for the entropy and let $\theta \mapsto \lambda(\theta)$ be a differentiable curve in $\Delta^0(\Omega)$ such that $\lambda(0) = \bar\lambda$. Let us compute the derivative
  \begin{multline*}
   \left. \derivby \theta H(\lambda(\theta))\right|_{\lambda=0} = - \left. \sum_{\omega \in \Omega} (\log \lambda(\omega;\theta)+1) \lambda'(\omega;\theta) \right|_{\theta=0} = \\ - \sum_{\omega \in \Omega} (\log \bar \lambda(\omega) + 1) \lambda'(\omega;0) = 0 \ .
 \end{multline*}

As $\bar\lambda$ is in the $\Delta^0(\Omega)$, for each $v$ in the space parallel to the simplex one can consider the curve $\theta \mapsto \bar\lambda + \theta v$ whose derivative at $\theta=0$ is $v$. It follows that for each $v$ one has
\begin{equation*}
  \sum_{\omega \in \Omega} (\log \bar \lambda(\omega) + 1) v(\omega) = 0
\end{equation*}
hence, $\log \bar \lambda$ is constant that is, $\bar\lambda$ is constant $\bar\lambda(\omega) = 1/N$. 

\medskip

Let us move now to the discussion of convex functions. If a convex set $A \in \R^m$ is open, then every straight line intersects $C$ in an open interval or an empty interval. For example, the subset of the solid probability simplex consisting of strictly positive probability functions is an open convex set. The closure $\overline A$ of an open convex set $A$ is a convex set. The difference $\overline A \setminus A$ is the boundary of the convex set. Let $x$ be a point of the boundary. A unit vector $u$ applied at $x$ enters $A$ if there is a $y \in A$ such that $u = (y-x)/\normof{y-x}$. The set of all entering vectors cannot contain two antipodal elements so that there is a unit vector $w$ such that $\scalarof w u < 0$ for all entering unit vector.

\begin{theorem}[Isolation Theorem]
  Let $A$ be an open convex set in $\R^m$ and let $x$ be in the border of $A$. There exists a unit vector $w$ such that  $\scalarof w {y-x} < 0$ for all $y \in A$ that is, the half-space contains the convex set
\end{theorem}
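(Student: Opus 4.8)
The plan is to derive $w$ from the supporting hyperplane furnished by Theorem~1, applied not to the open set $A$ but to its closure, and then to sharpen the resulting weak inequality to a strict one by exploiting that $A$ is open.

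First I would replace $A$ by the closed convex set $K = \overline A$. Since $A$ is nonempty (it has a boundary point), open, and convex, the standard description of the closure of a convex set gives $K^\circ = A$ and $\partial K = \overline A \setminus A$; in particular $x \in \partial K$ and $K^\circ \neq \emptyset$, so $K$ meets the hypotheses of Theorem~1. That theorem produces an affine $h(y) = \scalarof{c}{y} + b$ supporting $K$ at $x$, i.e. $h(x)=0$ and $h(y)\geq 0$ for all $y\in K$. Subtracting, $\scalarof{c}{y-x}\geq 0$ for every $y\in\overline A$; here $c\neq 0$, since $c=0$ would force $h\equiv 0$ and supply no half-space. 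Setting $w=-c/\normof{c}$ gives a unit vector with $\scalarof{w}{y-x}\leq 0$ for all $y\in\overline A$, in particular for all $y\in A$.

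Next I would upgrade $\leq$ to $<$ on $A$, which is the only point where openness is used. If $\scalarof{w}{y_0-x}=0$ for some $y_0\in A$, then, $A$ being open, $y_0+\varepsilon w\in A$ for all small $\varepsilon>0$, whence $\scalarof{w}{(y_0+\varepsilon w)-x}=\varepsilon\normof{w}^2=\varepsilon>0$, contradicting the weak inequality just established on $A$. Hence $\scalarof{w}{y-x}<0$ for every $y\in A$, which is the asserted isolation.

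Given Theorem~1, the only genuine work is this last strictness step together with the two routine convex-geometry facts $K^\circ=A$ and $c\neq 0$, so I do not expect a serious obstacle. For intuition I would also record the self-contained picture suggested by the discussion preceding the statement: the entering unit vectors $u=(y-x)/\normof{y-x}$, $y\in A$, are exactly the unit vectors of the open convex cone $D=\setof{\lambda(y-x)}{y\in A,\ \lambda>0}$, and $D$ contains no antipodal pair $u,-u$ --- for then $x$ would be an interior point of a segment with both endpoints in $A$, forcing the boundary point $x$ into $A$. Separating $D$ from the origin yields the same $w$, and $\scalarof{w}{u}<0$ for every entering $u$ is precisely $\scalarof{w}{y-x}<0$ for every $y\in A$.
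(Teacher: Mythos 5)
Your proof is correct, but it takes a genuinely different route from the paper: the paper's ``proof'' of the Isolation Theorem is a pure citation to Barvinok (pp.~45--46), where the result is established directly---essentially by the entering-vector picture sketched in the paragraph preceding the statement (the open convex cone $\setof{\lambda(y-x)}{y \in A,\ \lambda>0}$ contains no antipodal pair), which is exactly what you record as intuition in your final paragraph. Your main argument instead bootstraps from Theorem~1, applied to $K=\overline A$ via the standard facts $(\overline A)^\circ = A$ and $\partial K = \overline A \setminus A$ for an open convex $A$, and then upgrades the weak inequality $\scalarof w {y-x}\leq 0$ to a strict one by perturbing a putative equality point $y_0$ to $y_0+\varepsilon w \in A$; that strictness step is precisely where openness enters, and you carry it out correctly. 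Two caveats are worth making explicit. First, the literal statement of Theorem~1 does not assert that the linear part $A \in L(V)$ of the supporting affine function is nonzero, so your argument needs the (correct, and intended by Barvinok) reading that the theorem produces a genuine supporting hyperplane; you flag this with your remark that $c=0$ would force $h\equiv 0$, which is the right observation, though it is an appeal to the intended meaning rather than to the stated hypotheses. Second, in Barvinok the Isolation Theorem (\S II.1) comes first and is what underlies the supporting-hyperplane theorem (\S II.2) that the paper quotes as Theorem~1, so if the citations were unfolded your derivation would run in a circle; within the paper's own logical order, however, Theorem~1 is stated earlier as a black box, so your reduction is legitimate. What each approach buys: Barvinok's direct proof gives logical independence and works as the foundation for the separation machinery, while your reduction is shorter and self-contained relative to what the paper has already asserted, isolating the single new ingredient---strictness from openness---in a two-line argument.
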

\begin{proof}See a full proof in \citet[p 45-46]{barvinok:2002}.
\end{proof}

A function $\phi$ defined on $\R^n$ with values in $\overline\R = \R \cup \set{+\infty}$ is convex if the \emph{epigraph} $\epiof \phi = \setof {(x,t)}{ x \in \domof \phi, t \in \R, \phi(x) \leq t}$ is a convex subset of $\R^{n+1}$. Define $\domof \phi$ to be the set where $\phi$ takes finite values. \emph{If $\phi$ is convex, then $\domof \phi$ is a convex subset of $\R^n$.} If $x_1,x_2 \in \domof \phi$, then there exist $(x_1,t_1), (x_2,t_2) \in \epiof \phi$ and for all $\lambda \in [0,1]$ it holds $((1-\lambda)x_1+\lambda x_2,(1-\lambda)t_1+\lambda t_2) \in \epiof \phi$. In particular, $\phi((1-\lambda)x_1+\lambda x_2) < + \infty$. \emph{If $\phi$ is  convex, then $(1-\lambda)\phi(x_1) + \lambda \phi(x_2) \leq \phi((1-\lambda)x_1+\lambda x_2)$ for all $x_1,x_2 \in \R^n$ and $\lambda \in [0,1]$.} If any of $x_1$, $x_2$ in nor in $\domof \phi$ the inequality is trivially satisfied. Otherwise, it is the same computation as above. Conversely, if $\phi \colon \domof \phi \to \R$ and $(1-\lambda)\phi(x_1) + \lambda \phi(x_2) \leq \phi((1-\lambda)x_1+\lambda x_2)$ for all $x_1,x_2 \in \domof \phi$ and $\lambda \in [0,1]$, then the function extended with value $+\infty$ outside the domain is convex.

Let $\phi$ be convex, and define the strict epigraph be open convex set $\setof {(x,t)}{ x \in \domof \phi, t \in \R, \phi(x) < t}$. Assume that at a point $(x,\phi(x))$ the entering unit vectors are not all horizontal. Then the Isolation Theorem implies that there exist at least a \emph{supporting hyper-plane}. In such a case, $\phi$ on all such points \emph{$\phi$ is the point-wise maximum of the supporting affine functions}. In the differentiable case, the tangent plane is the unique supporting hyperplane. If $\phi \in C^2(\mathcal O)$ then the Hessian matrix is non-negative definite.

The following result is important in the theory of exponential families. \emph{Let $\phi$ be convex and let $\phi$ be differentiable on an open $\mathcal O$. Then $\nabla \phi \colon \mathcal O \to \R^n$ is \emph{monotone} i.e., $\scalarof {\nabla \phi(x) - \nabla \phi(y)}{x-y} \geq 0$ for $x,y \in \mathcal O$.} The basic inequality can be rewritten as
\begin{equation*}
  \lambda^{-1}\left(\phi(x + \lambda(y-x))-\phi(x)\right) \leq \phi(y) - \phi(x) \ . 
\end{equation*}
If $\lambda \to 0$, then $\scalarof{\nabla \phi(x)}{y-x} \leq \phi(y) - \phi(x)$. By adding the same inequality with $x$ and $y$ exchanged, one obtains the monotonicity.

Conversely, \emph{if $\phi$ is differentiable and monotone on an open set $\mathcal O$, then $\phi$ is convex on $\mathcal O$.} Write $z = (1-\lambda)x + \lambda y$ and assume $0 < \lambda < 1$ because otherwise there is nothing to prove. Observe that
\begin{widetext}
\begin{multline*}
  \phi(z) - \phi(x) = \int_0^1 \scalarof {\nabla\phi(x + t (z-x))}{z-x} \ dt = \\
  \int_0^1 \scalarof {\nabla\phi(x + t (z-x)) - \nabla\phi(z)}{z-x} \ dt + \scalarof{\nabla\phi(z)}{z-x} \leq \scalarof{\nabla\phi(z)}{z-x} = \lambda \scalarof{\nabla\phi(z)}{y-x} \ .
\end{multline*}
In fact, $z-x$ and $(x + t(x-z)) - z$ are proportional with factor $-(1-t) \leq 0$. In a similar way,
\begin{multline*}
  \phi(y) - \phi(z) = \int_0^1 \scalarof {\nabla\phi(z + t  (y-z))}{y-z} \ dt = \\ \int_0^1 \scalarof {\nabla\phi(z + t (y-z))-\nabla\phi(z)}{y-z} \ dt + \scalarof{\nabla\phi(z)}{y-z} \geq \scalarof{\nabla\phi(z)}{y-z} = (1-\lambda) \scalarof{\nabla\phi(z)}{y-x}\ ,
\end{multline*}
as $y-z$ and $(z+t(y-z)) - z$ are proportional with a factor $t \geq 0$. Rearrange the two inequalities as
\begin{equation*}
  \phi((1-\lambda)x+\lambda y) \leq \phi(x) + \lambda \scalarof{\nabla\phi(z)}{y-x} \text{and} 
  \phi((1-\lambda)x+\lambda y) \leq \phi(y) + (1-\lambda) \scalarof{\nabla\phi(z)}{y-x} 
\end{equation*}
and take the convex combination to conclude the proof. This proof is taken from \citet[p. 26]{rockafellar:1970}.
\end{widetext}

\section{The open simplex}
\label{sec:1a}

Let $\Omega$ be given a finite set with $N = \#\Omega$ points, the \emph{sample space}. Denote by $\simplexof{\Omega}$ the set of the \emph{probability functions} $p \colon \Omega \to \nonnegreals$, $\sum_{x\in\Omega} p(x) = 1$. It is a $(N-1)$-simplex of $\R^\Omega$ that is, an $(N-1)$-dimensional polytope which is the convex hull of its $N$ vertexes $\delta_x$, $x \in \Omega$. It is a closed and convex subset of the affine space $\affineof{\Omega} = \setof{q \in \R^\Omega}{\sum_{x\in\Omega} q(x) = 1}$, the space of \emph{signed probability functions}. It has a non empty relative topological interior $\osimplexof{\Omega}$, which is the set of the strictly positive probability functions,
\begin{equation*}
 \osimplexof{\Omega} = \setof{p \in \R^{\Omega}}{\sum_{x\in\Omega} p(x) = 1, p(x) > 0}.
\end{equation*}
The border of the simplex $\simplexof{\Omega}$ is the union of all its faces as a convex set. Recall that  a face of maximal dimension $(n-1)$ is called \emph{facet}. Each face is itself a simplex. An \emph{edge} is a face of dimension 1. The focus will be now on the geometry of the open simplex $\osimplexof{\Omega}$.

\bigskip
Recall that our aim here is to provide a presentation of Information Geometry in the sense of the monographs by \citet{amari|nagaoka:2000}, \citet{amari:2016}, and \citet*{Ay|Jost|Le|Schwachhofer:2017IGbook}. Our presentation below does not use explicitly any specific parameterization of the sets $\osimplexof{\Omega}$, $\simplexof{\Omega}$, $\affineof{\Omega}$, whose topological and geometrical structure is inherited from $\R^\Omega$. The basic arguments have a ``kinetic'' flavour, in contrast with the more frequently used ``metric'' approach. I.e., I consider curves $t \mapsto p(t) \in \simplexof{\Omega}$ and look for a proper definition of velocity and acceleration.

\bigskip
The actual extension of this theory to non finite sample space requires a careful handling as most of the topological features of the finite case do not hold in the infinite case. 

One possibility is given by the so called \emph{exponential manifold}, which were first introduced in \citet{pistone|sempi:95}, and which are Banach manifolds modeled on Orlicz spaces, see the review paper \citet{pistone:2013GSI}. A different, more inclusive, option has been developed in the monograph by \citet*{ay|jost|le|schwachhofer:2017}. They use as basic topological framework the Banach space of finite signed measures with the total variation norm. The two approaches coincide when the state space is finite.

Another option would be to consider differentiable densities as the image of a geometric measure under the action of a diffeomorphism and push-forward the geometry od the group of diffeomorphism to the densities. This approach is quite interesting because the distributions are identifies with their simulation. A further possibility is the use of Kantorovich and Wasserstein geometries. \citet{montrucchio|pistone:2019-arXiv:1905.07547} discusses the finite sample space case. There is an important literature about the general case, see in particular, the monograph by \citet*{ambrosio|gigli|savare:2008}.

\subsection{The Fisher-Rao square root embedding}
\label{sec:FR}

In 1945 C.R.~Rao suggested the following construction of a Riemannian geometry on the open probability simplex $\osimplexof{\Omega}$. Nowadays, it is more commonly known under the joint name of Fisher-Rao.

Let us consider the strictly positive orthant of a sphere of radius 2 in $\R^\Omega$,
\begin{equation*}
  S_> = \setof{a \in \R^\Omega}{\normof a = 2, a(x) > 0} \ .
\end{equation*}
One has the 1-to-1 mapping of $S_>0$ to the open simplex
\begin{equation*}
\sigma \colon  a \mapsto \frac14 a^2 = \frac14 (a^2(x) \colon x \in \Omega) \ .
\end{equation*}
This mapping is a smooth mapping from the sub-manifold $S_>$ to the sub-manifold $\osimplexof{\Omega}$. In fact, the tangent at $a$ is expressed as $T_aS_> = \setof{u \in \R^\Omega}{\scalarof u a = 0}$ and the tangent space at $p = \sigma(a)$ is expressed as $T_p\osimplexof{\Omega} = \setof{U \in  \R^\Omega}{\sum_{x \in \Omega} U(x) = 0}$. In such charts, the tangent application of $\sigma$ at $a$ is the ordinary differential, $d\sigma(a)[u] = \frac12 a u$.

The same construction is frequently presented in the literature starting from the so-called embedding $\sigma^{-1} \colon p \mapsto 2 \sqrt p = a$.

Now, I want to identify the push-forward $g^\text{FR}$ with $\sigma$ of the Riemannian metric defined by $g(u,v) = \scalarof u v$ on $S_>$. For that, I require $g_p^\text{FR}(U,V) = g_a(u,v)$ if $p = \sigma(a)$, $U = d\sigma(a)[u]$, $V = d\sigma(a)[v]$, that is,
\begin{equation*}
  g_p^\text{FR}(U,V) = \sum_{x,y\in\Omega} \frac{U(x)V(x)}{p(x)} \ .
\end{equation*}

Here, I follow another approach, that leads to the same construction expressed in a different tangent bundle.

\subsection{Statistical bundle}

The main feature of this presentation of IG consists in the joint geometrical structure given to the probability simplex, that is the set of probability functions, together with the set of integrable functions. Precisely, the set of all couples $(p,f)$ where $p$ is a probability function and $f$ is a random variable is the domain of the mapping $(p,f) \mapsto \sum_x f(x) p(x)$. The two, taken together, form a \emph{vector bundle}. In the finite state case the bundle is trivial because all random variables $f$ are $p$-integrable. This is not the case when the sample space is infinite. 

This concept variously appears in the literature of IG with the name of Hilbert bundle. Cf. \citet{amari:85}, \citet{lauritzen:87ims}, \citet{murray|rice:93}, \citet{kass|vos:1997}, \citet{gibilisco|pistone:98}, \citet{amari|nagaoka:2000}, \citet{le:2017AISM}.

More precisely, let us associate to each probability function $p \in \simplexof{\Omega}$ a sub-space of the vector space of real random variables $L(p)$. In our finite setting, $L(p)$ is identified with the vector space $\R^\Omega$ if the support is full $\suppof p = \Omega$; otherwise, with $\Omega_0 = \suppof p \subset \Omega$, $L(p)$ is identified with $\R^{\Omega_0}$.

\begin{definition}\label{def:opensimplex}\ 
  \begin{enumerate}
  \item For each $p \in \osimplexof{\Omega}$, $L^2(p)$ is the vector space of real functions of $\Omega$ endowed with the inner product $\scalarat p U V = \expectat p {UV}$. It holds $L^2(p) = \R \oplus L^2_0(p)$.
\item   The \emph{statistical bundle} with base $\osimplexof{\Omega}$ is
  \begin{equation*}
    S\osimplexof{\Omega} = \setof{(p,U)}{p \in \osimplexof{\Omega}, U \in L^2_0(p)} \ .
  \end{equation*}
  \end{enumerate}
\end{definition}

\paragraph*{Remark.} Notice that $S\osimplexof{\Omega}$ is a semi-algebraic subset of the polynomial ring
  \begin{equation*}
\R[p(x),U(x) \colon x \in \Omega] \ .    
  \end{equation*}
The geometry of statistical models on a finite sample space can be studied with the tool of real algebraic geometry i.e., as \emph{Algebraic Statistics}. Cf. the monographs \citet*{pistone|riccomagno|wynn:2001}, \citet{pachter|sturmfels:2005}, \citet*{drton|sturmfels|sullivan:2009}, \citet{watanabe:2009}, \citet*{aoki|hara|takemura:2012}, \citet*{zwiernick:2016}, \citet{sullivan:2018book}. The interplay between algebraic geometry and differential geometry is discussed in the conference proceedings \citet{gibilisco|riccomagno|rogantin|wynn:2010}.

\bigskip
The geometry of the statistical bundle $S\simplexof{\Omega}$ propts for a peculiar form of velocity vectors which are defined in terms of statistical \emph{scores}, a name introduced by R.~Fisher.

Let $t \mapsto p(t) \in \simplexof{\Omega}$ be a curve which is differentiable as a curve in $\affineof{\Omega}$. Observe that $\scalarof \one {\dot p(t)} = 0$  and call $t \mapsto (p(t),\dot p(t))$ the \emph{velocity curve} which takes values in the trivial bundle $\simplexof{\Omega} \times A_0(\Omega)$, $A_0(\Omega) = \setof{v}{\one^t \cdot v = 0}$.

The following is the finite state space version of a result in \citet{ay|jost|le|schwachhofer:2017}.

\begin{proposition}
At each $t$ the support of $\dot p(t)$ is contained in the support of $p(t)$, so that there exists a curve $t \mapsto (p(t),Sp(t))$ in $S\simplexof{\Omega}$ such that 
 $\dot p(t) = Sp(t) \cdot p(t)$. The expected value of $Sp(t)$ with respect to $p(t)$ is zero.
\end{proposition}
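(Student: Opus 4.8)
The plan is to reduce everything to a pointwise, coordinate-by-coordinate argument, which is exactly the one already sketched in the ``fundamental remark'' above. First I would fix a parameter value $t$ in the (open) interval on which the curve is defined, together with an outcome $x \in \Omega$ satisfying $p(x;t) = 0$. Since $p(\tau) \in \simplexof{\Omega}$ for every $\tau$, the real-valued differentiable function $\tau \mapsto p(x;\tau)$ is nonnegative and attains the value $0$ at $\tau = t$; hence $t$ is an interior minimiser, and the first-order condition forces $\dot p(x;t) = 0$. Letting $x$ range over all such outcomes yields $\suppof{\dot p(t)} \subseteq \suppof{p(t)}$, which is the first assertion.

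Given this support containment, I would construct the score by division on the support and by an arbitrary (say zero) assignment off it. Concretely, set $Sp(x;t) = \dot p(x;t)/p(x;t)$ for $x \in \suppof{p(t)}$, where $p(x;t) > 0$ makes the quotient legitimate, and $Sp(x;t)=0$ for $x \notin \suppof{p(t)}$. The factorisation $\dot p(x;t) = Sp(x;t)\, p(x;t)$ then holds on the support by construction, and off the support trivially, because there $p(x;t)=0$ makes the right-hand side zero while the left-hand side already vanishes by the first step. This exhibits $Sp(t)$ as an element of $L^2(p(t))$ supported on $\suppof{p(t)}$, as required by the identification of $L(p)$ with $\R^{\suppof p}$, and hence the pair $(p(t),Sp(t))$ as a point of the statistical bundle.

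The mean-zero property is the only genuinely ``global'' computation, and it follows directly from the constraint defining the affine hull. I would compute $\expectat{p(t)}{Sp(t)} = \sum_{x} Sp(x;t)\, p(x;t) = \sum_x \dot p(x;t) = \scalarof{\one}{\dot p(t)}$, and the last quantity vanishes because differentiating the identity $\sum_x p(x;\tau) \equiv 1$ gives $\scalarof{\one}{\dot p(\tau)} = 0$ for all $\tau$ --- precisely the observation $\scalarof{\one}{\dot p(t)}=0$ recorded just before the statement. Therefore $Sp(t) \in L^2_0(p(t))$, which completes the claim.

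The hard part is really the first step, and there the only subtlety worth flagging is the appeal to an \emph{interior} minimum: the vanishing of the one-variable derivative at a boundary zero of $p(x;\cdot)$ requires $t$ to lie in the interior of the parameter interval (at an endpoint, the statement should be read with the appropriate one-sided derivative). Everything following that is the routine division of the second step and the one-line expectation computation of the third.
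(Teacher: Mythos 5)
Your proposal is correct and follows essentially the same route as the paper's proof: the interior-minimum argument at zeros of $p(x;\cdot)$ giving $\dot p(x;t)=0$, the case-defined score as in \eqref{eq:score}, and the expectation computation $\sum_x Sp(x;t)\,p(x;t)=\sum_x \dot p(x;t)=0$ via the differentiated normalisation constraint. Your explicit caveat about one-sided derivatives at endpoints of the parameter interval is a sound refinement the paper leaves implicit, but it does not change the argument.
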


\begin{proof}
For each $t$ and $x \in \Omega$ the condition $p(x;t) = 0$ implies that $t$ is a minimum, hence $\dot p(x;t) = 0$. It follows for all $t$ that $\dot p(t) = Sp(t) \cdot p(t)$ where $Sp(t)$ is defined by
\begin{equation}\label{eq:score}
Sp(x;t) = 
\begin{cases} 0 & \text{if $p(x;t) = 0$}, \\ \frac{\dot p(x;t)}{p(x;t)} = \derivby t \log p(x;t) & \text{if $p(x;t) > 0$.}
  \end{cases} 
\end{equation}
The expected value of $Sp(t)$ at $p(t)$ is $\sum_x Sp(x;t) \ p(x;t) = \sum_x \dot p(x;t) = 0$.\end{proof}

\begin{definition}
The (differential) \emph{score} of the differentiable curve $t \mapsto p(t) \in \simplexof{\Omega}$ is the curve in the statistical bundle $t \mapsto (p(t),Sp(t)) \in S\simplexof{\Omega}$.
\end{definition}

I first discuss the statistical geometry on the open simplex by deriving it from a \emph{vector bundle} with base $\osimplexof{\Omega}$. Later I will show that such a bundle can be identified with the tangent bundle of proper manifold structure. It is nevertheless interesting to observe that a number of geometrical properties do not require the actual definition of the statistical manifold, possibly opening the way to a new type of generalization outside the basic finite state space case.

\paragraph*{Comment.} For each $p \in \osimplexof{\Omega}$ consider the plane through the origin, orthogonal to the vector $\overrightarrow{\text{O}p}$. The set of positive probabilities each one associated to its plane forms a vector bundle which is the basic structure of our presentation of Information Geometry. Note that, because of our orientation to Statistics, we call each element of $\R^\Omega = L(\Omega)$ a \emph{random variable}.

In geometry, a mapping $F$ defined on the  probabilities $p \in \simplexof{\Omega}$ to the bundle, compatible with the bundle structure, that is
\begin{equation*}
  F \colon p \mapsto (p, F(p)) \in \simplexof{\Omega} \times \cup_{p\in\simplexof{\Omega}} S_p\simplexof{\Omega}) \ ,
\end{equation*}
such that $F(p) \in S_p\simplexof{\Omega}$---that is, $F(p)$ is a random variable and $\expectat p {F(p)} = 0$,--- is called a \emph{section}  of the vector bundle. In Statistics, such a mapping is called an \emph{estimating function} as the equation $F(\hat p,x) = 0$, $x \in \Omega$, provides an \emph{estimator}, that is a distinguished mapping from the sample space $\Omega$ to the simplex of probabilities $\simplexof{\Omega}$.
   
\paragraph*{Comment.}
The previous definition is suggested by the classical set-up of statistics, as it is revealed by the Fisher-Rao computation that leads to the notion of score. However this set-up is too narrow in a number of situation.
\begin{enumerate}
\item The probability functions in the application of interest could have zero values at some $x \in \Omega$, that is the set of interest could be the full simplex $\simplexof{\Omega}$. 
\item There are simple examples where one wants to study a neighborhood of the border of the simplex, namely something in the full affine space $\affineof{\Omega}$. See below the discussion of optimization.
\end{enumerate}

\paragraph*{Exercise}  A formal extended definition is as follows. \emph{A)} For each $\eta \in \affineof{\Omega}$ let $B_\eta$ be the vector space of random variables $U$ that are $\mu$-centered,
\begin{equation*}
  B_\eta = \setof {U \colon \Omega \to \R} {\expectat \eta U = \sum_{x\in\Omega} U(x)\ \eta(x) = 0} \ .
\end{equation*}
\emph{B)} Each $B_\eta$ is endowed with the bi-linear form
\begin{equation*}
  \scalarat \eta {U}{V} = \expectat \eta {UV} = \sum_{\setof{x\in\Omega}{\eta(x) \ne 0}} U(x)V(x) \ \eta(x) \ .
\end{equation*}
\emph{C)} The \emph{statistical bundle} of the affine space $\affineof{\Omega}$ is the linear bundle on $\affineof{\Omega}$
\begin{equation*}
  \saffineof{\Omega} = \setof{(\eta,U)}{\eta \in \affineof{\Omega}, U \in  B_\eta} \ .
\end{equation*}
\emph{D)} It is a manifold isomorphic to the open subset of the Grassmanian manifold $\operatorname{Grass}(\R^\Omega,\#\Omega-1)$ of sub-spaces $B$ that do not contain constant vectors. In fact, each fiber $B_\eta$ is a subspace of $\R^\Omega$ of co-dimension 1; Viceversa, for each subspace $B$ of dimension $(n-1)$ and not containing the constant, there is a unique complement vector $\eta$ such that $\sum_{x\in\Omega} \eta_x = 1$. 

\subsection{Natural gradient}
Let us  now discuss the notion of gradient in the statistical bundle of the open simplex.

\begin{proposition}
Let $I \ni t \mapsto p(t)$ be a $C^1$ curve in $\osimplexof{\Omega}$. For each $f \colon \Omega \to \R$,
    \begin{equation*}
      \derivby t \expectat {p(t)} {f} = \scalarat {p(t)} {f -\expectat {p(t)} f} {Sp(t)} \ ,
    \end{equation*}
    where $Sp(t) = \derivby t \logof{p(t)}$
  \end{proposition}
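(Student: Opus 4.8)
The plan is to differentiate the expectation directly and then invoke the two properties of the score that were established in the preceding Proposition: namely, that $\dot p(t) = Sp(t) \cdot p(t)$ and that $\expectat{p(t)}{Sp(t)} = 0$.

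First I would write the expectation as the finite sum $\expectat{p(t)}{f} = \sum_{x\in\Omega} f(x)\, p(x;t)$. Since $\Omega$ is finite and the curve $t \mapsto p(t)$ is $C^1$, differentiation commutes with the (finite) summation, giving $\derivby t \expectat{p(t)}{f} = \sum_{x\in\Omega} f(x)\, \dot p(x;t)$. Next I would substitute the relation $\dot p(x;t) = Sp(x;t)\, p(x;t)$ valid on the open simplex, where $p$ stays strictly positive so that $Sp(x;t) = \derivby t \logof{p(x;t)}$. The derivative then becomes $\sum_{x\in\Omega} f(x)\, Sp(x;t)\, p(x;t) = \expectat{p(t)}{f\cdot Sp(t)} = \scalarat{p(t)}{f}{Sp(t)}$, which already expresses the derivative as a fiber inner product of $f$ against the score.

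The final step is to replace $f$ by its centered version $f - \expectat{p(t)}{f}$. Since $\expectat{p(t)}{f}$ is a constant in $x$, it factors out of the inner product, and the resulting correction term is $\expectat{p(t)}{f}\cdot\expectat{p(t)}{Sp(t)} = 0$ by the vanishing expectation of the score. Hence $\scalarat{p(t)}{f}{Sp(t)} = \scalarat{p(t)}{f - \expectat{p(t)}{f}}{Sp(t)}$, which is the asserted identity.

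There is no genuine obstacle here; the computation is routine, and the only point deserving care is the reason for writing the centered form. The natural output of the calculation is $\scalarat{p(t)}{f}{Sp(t)}$, and the passage to $f - \expectat{p(t)}{f}$ leaves the numerical value unchanged precisely because of the centering of the score. Conceptually, however, this step is the whole point: $f - \expectat{p(t)}{f}$ is the genuine fiber representative lying in $L^2_0(p(t))$, the space into which $Sp(t)$ falls, so the identity should be read as saying that the \emph{natural gradient} of the function $p \mapsto \expectat{p}{f}$ in the statistical bundle is the centered random variable $f - \expectat{p}{f}$, paired against the velocity score through the fiber inner product $\scalarat{p}{\cdot}{\cdot}$.
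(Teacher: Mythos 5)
Your proposal is correct and follows essentially the same route as the paper: differentiate the finite sum term by term, substitute $\dot p(x;t) = Sp(x;t)\,p(x;t)$, and then center $f$ using $\expectat{p(t)}{Sp(t)} = 0$ to obtain the fiber inner product $\scalarat{p(t)}{f - \expectat{p(t)}{f}}{Sp(t)}$. Your closing remark on why the centered form matters---that $f - \expectat{p}{f}$ is the fiber representative in $L^2_0(p)$, anticipating the natural gradient of $p \mapsto \expectat{p}{f}$---is a sound reading of the point the paper makes immediately after the proof.
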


\begin{proof}
  \begin{align*}
    \derivby t \expectat {p(t)} {f} &= \derivby t \sum_{x \in \Omega} f(x) p(x;t) \\
 &= \sum_{x \in \Omega} f(x) \derivby t p(x;t) \\
 &= \sum_{x \in \Omega} f(x) \derivby t \log p(x;t) \ p(x;t) \\
                                    &= \expectat {p(t)} {f Sp(t)}   \quad \text{(using $\expectat {p(t)} {Sp(t)} = 0$)} \\
                                    &= \expectat {p(t)} {(f-\expectat {p(t)} f) Sp(t)} \\
    &= \scalarat {p(t)}{f - \expectat {p(t)} f}{Sp(t)} \ .
  \end{align*}
\end{proof}

Notice that $p \mapsto f - \expectat p f$ is a \emph{section} of $S\osimplexof{\Omega}$ and  $t \mapsto Sp(\cdot)$ is a \emph{lift} of $p(\cdot)$.  

\paragraph*{Example.} I have chosen not discuss here the case of the closed simplex. The condition for the existence of the differential score means that the differential score exists if and only if the curve $t \mapsto \eta(t) \in \affineof{\Omega}$ hits the faces of $\simplexof{\Omega}$ only \emph{tangentially}. For example: $n = 3$, $p(0;t) = t$, $p(1;t) = \sqrt{\frac12 - t^2}$, $p(2;t) = 1 - t - \sqrt{\frac12 - t^2}$. 

\begin{definition}[Natural gradient]
\item Given a function $f \colon \osimplexof{\Omega} \to \R$, its \emph{natural gradient} is a section
  \begin{equation*}
    \osimplexof{\Omega} \ni p \mapsto (p,\grad F(p)) \in S\osimplexof{\Omega} \ .
  \end{equation*}
 such that for each regular curve $I \ni t \mapsto p(t)$ it holds
  \begin{equation*}
    \derivby t f(p(t)) = \scalarat {p(t)} {\grad f(p(t))} {Sp(t)}, \quad t \in I \ .
  \end{equation*}
\end{definition}

\begin{proposition}[Computing $\grad$]
\item
If $f$ is a $C^1$ function on an open subset of $\R^\Omega$ containing $\osimplexof{\Omega}$, by writing $\nabla f(p) \colon \Omega \ni x \mapsto \partiald {p(x)} {f(p)}$, the following relation between the statistical gradient and the ordinary gradient holds:
\begin{equation*}
  \grad f (p) = \nabla f(p) - \expectat p {\nabla f(p)} \ .
\end{equation*}
\end{proposition}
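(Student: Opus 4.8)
The plan is to verify that the proposed section $p \mapsto \nabla f(p) - \expectat p {\nabla f(p)}$ is a legitimate element of the fibre $L^2_0(p)$, that it satisfies the defining relation of the natural gradient along every regular curve, and then to invoke uniqueness. The guiding idea is that the defining identity pairs $\grad f$ against scores through the $p(t)$-weighted inner product, whereas the ordinary gradient arises from the unweighted Euclidean pairing on $\R^\Omega$; the score is exactly the bridge between the two.

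First I would differentiate $f$ along a curve by the ordinary chain rule in $\R^\Omega$. Since $f$ is $C^1$ on an open neighbourhood of $\osimplexof{\Omega}$ and $t \mapsto p(t)$ is $C^1$ with values there,
\begin{equation*}
\derivby t f(p(t)) = \sum_{x \in \Omega} \partiald{p(x)}{f(p(t))}\, \dot p(x;t) = \scalarof{\nabla f(p(t))}{\dot p(t)} \ ,
\end{equation*}
where on the right $\scalarof{\cdot}{\cdot}$ denotes the unweighted pairing on $\R^\Omega$. Next I would insert the score: by the preceding Proposition, $\dot p(x;t) = Sp(x;t)\,p(x;t)$ with $Sp(t) = \derivby t \logof{p(t)}$, so the unweighted pairing turns into the $p(t)$-weighted one,
\begin{equation*}
\derivby t f(p(t)) = \sum_{x\in\Omega} \nabla f(p(t))(x)\, Sp(x;t)\, p(x;t) = \scalarat{p(t)}{\nabla f(p(t))}{Sp(t)} \ .
\end{equation*}
Because $\expectat{p(t)}{Sp(t)} = 0$, subtracting the constant $\expectat{p(t)}{\nabla f(p(t))}$ does not change the pairing, giving
\begin{equation*}
\derivby t f(p(t)) = \scalarat{p(t)}{\nabla f(p(t)) - \expectat{p(t)}{\nabla f(p(t))}}{Sp(t)} \ .
\end{equation*}
This exhibits $\nabla f(p) - \expectat p {\nabla f(p)}$ as a candidate that is manifestly centred, hence a genuine section of $S\osimplexof{\Omega}$.

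It then remains to match the candidate against the definition, which determines $\grad f$ uniquely. The step I expect to require the most care is precisely this uniqueness, since the defining identity pins down $\grad f(p)$ only once one knows that the scores $Sp(t)$ of regular curves through $p$ fill the entire fibre $L^2_0(p)$. I would establish this by exhibiting, for an arbitrary $V \in L^2_0(p)$, the curve $p(t)(x) = p(x)(1 + tV(x))$: it stays in $\osimplexof{\Omega}$ for small $t$ because $\sum_{x\in\Omega} p(x)(1+tV(x)) = 1 + t\expectat p V = 1$ and the entries stay positive, and its score at $t=0$ is exactly $V$. Consequently the identity $\scalarat p {\grad f(p)}{V} = \scalarat p {\nabla f(p) - \expectat p {\nabla f(p)}}{V}$ holds for every $V \in L^2_0(p)$; since both sides lie in $L^2_0(p)$ and $\scalarat p \cdot \cdot$ is an inner product there, the two centred random variables coincide, which is the asserted formula.
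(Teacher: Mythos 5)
Your computation coincides with the paper's own proof: differentiate $t\mapsto f(p(t))$ by the chain rule in $\R^\Omega$, rewrite $\dot p(x;t)=Sp(x;t)\,p(x;t)$ to convert the unweighted pairing into the $p(t)$-weighted one, and center using $\expectat{p(t)}{Sp(t)}=0$. The only point where you go beyond the paper is the uniqueness step, which you settle by exhibiting, for each $V\in L^2_0(p)$, the mixture curve $t\mapsto(1+tV)\cdot p$ with score $V$ at $t=0$, so that the scores of regular curves fill the fibre; the paper leaves this (correct, and consistent with its later use of the same mixture curves) step implicit, so your version is essentially the same argument carried out slightly more completely.
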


\begin{proof}
  \begin{align*}
    \derivby t &f(p(t)) = \derivby t f(p(x;t) \colon x\in \Omega) \\ &= \sum_{x\in\Omega} \partiald {p(x)} f(p(x;t) \colon x\in \Omega) \derivby t p(x;t) \\ &=  \sum_{x\in\Omega} \partiald {p(x)} f(p(x;t) \colon x\in \Omega) \derivby t \log p(x;t) \ p(x;t) \\ &= \scalarat {p(t)} {\nabla f(p(t))} {Sp(t)} \\ &=  \scalarat {p(t)} {\nabla f(p(t))-\expectat {p(t)} {\nabla f(p(t))}} {Sp(t)} \\ &= \scalarat {p(t)} {\grad f(\eta(t))} {S\eta(t)} \ .  
  \end{align*}
\end{proof}

\paragraph*{Example: Natural gradient of the entropy}
Here is our basic example. The function
\begin{equation*}
  \entropyof p = - \sum_{x \in \Omega} p(x) \log p(x)
\end{equation*}
satisfies the conditions of the proposition with
\begin{equation*}
  \nabla \entropyof p = (x \mapsto - \log p(x) - 1) . \ 
\end{equation*}
Moreover,
\begin{equation*}
  \expectat p {\nabla \mathcal H} = \sum_{x \in \Omega} (- \log p(x) - 1) p(x) = \entropyof p - 1 \ .
\end{equation*}
It follows that
\begin{equation*}
  \grad \entropyof p = - \log p - 1 - \entropyof p + 1 = -\log p - \entropyof p \ .
\end{equation*}

The condition $\grad \entropyof q = 0$ is satisfied by a constant $\log p$.

\paragraph*{Remarks.} The Information Geometry on the simplex does not coincide with the geometry of the embedding of the simplex $\osimplexof{\Omega} \to \R^\Omega$, in the sense the statistical bundle is not the tangent bundle of these embeddingIt will become the tangent bundle of the proper geometric structure which is given by special atlases.

The vector $Sp(t) \in S_{p(t)}\osimplexof{\Omega}$ is meant to represent the relative variation of the information in a one dimensional statistical model in the sense it is a relative derivative. Geometrically, the differential score is a representation of the velocity along a curve.

Consider the level surface of $f \colon \affineof{\Omega} \to \R$ at $\eta_0 \in \affineof{\Omega}$, that is the surface $\setof{\eta \in \affineof{\Omega}}{f(\eta) = f(\eta_0)}$, and assume $\eta_0$ is not a critical point, $\grad f(\eta_0) \ne 0$. Then for each curve through $\eta_0$, $I \mapsto \eta(t)$ with $\eta(0) = \eta_0$, such that $f(\eta(t)) = f(\eta(0))$,
\begin{multline*}
  0 = \left. \derivby t f(\eta(t)) \right|_{t=0} = \\ \left. \scalarat {\eta(t)} {\grad f(p(t))} {p(t)} \right|_{t=0} = \scalarat {\eta_0} {\nabla f(\eta_0)} {S\eta(t_0)} \ ,
\end{multline*}
that is, \emph{all velocities $Sp(t_0)$ tangential to the level set are orthogonal to the statistical gradient}. Note that I have not jet defined a manifold such that the statistical bundle is equal to the tangent bundle. 

If the function $f \colon \affineof{\Omega} \to \R$ extends to a $C^1$ function on an open subset of $\R^\Omega$, then one can compute the statistical gradient via the ordinary gradient in the geometry of $\R^\Omega$, namely $\nabla f(\eta) \colon \Omega \ni x \mapsto \partiald {\eta(x)} {f(\eta)}$.  Note that the statistical gradient is zero if, and only if, the ordinary gradient is constant.

\subsection{Flows}

As already said, our emphasis is on a kinematic  approach to IG. Let us start to consider differential equations.  

\begin{definition}[Flow]\ 
\begin{enumerate}
\item Given a section $F \colon \osimplexof{\Omega}$ the \emph{trajectories along the section} are the solution of the (statistical) \emph{differential equation}
  \begin{equation*} 
    Sp(t) = F(p(t)) \ .
  \end{equation*}
\item If $F$ is defined on a open set of $\R^\Omega$ containg $\osimplexof{\Omega}$ with values in $\R^\Omega$, the statistical differential equation is equivalent to the system of ordinary differential equations
\begin{equation*} 
  \derivby t p(x;t) = p(x;t)F(x, \bm p(t)) \qquad x \in \Omega \ .
\end{equation*}
\item
The \emph{gradient flow} is the flow of the section $F = \grad f$.
\end{enumerate}
\end{definition}

The right-end-side of differential equation is 
\begin{equation*}
  G(x, \bm p) = p(x)F(x,\bm p(y)) \ ,
\end{equation*}
so that $\sum_{x \in \Omega} G(x,\bm p) = 0$. And conversely. This class of differential equations is well studied in the literature under various names, e.g., replicator equation.

\bigskip
\paragraph*{Example : Gradient flow of the expected value}
Given a random variable $f$, consider the section $F(p) = f - \expectat p f$. The flow of $F$ is the solution of
\begin{equation*}
  \dot p(x;t) = p(x;t) (f(x) - \sum_{y \in \Omega} f(y) p(y;t)) \ .
\end{equation*}

The solution is an exponential family. Consider the 1-dimensional statistical model 
\begin{equation}\label{eq:exp-model}
  p(x;t) = \expof{t f(x) - \psi(t)} p_0(x) \ ,
\end{equation}
with $\psi(t)$ normalising constant.
\begin{equation*}
  \expof {\psi(t)}  = \sum_{x \in \Omega} \expof{t f(x)} \ .
\end{equation*}
 It is a curve in $\osimplexof{\Omega}$ with $p(x;0) = p_0(x)$. The differential score is
 \begin{equation*}
   Sp(t) = \derivby t \log p(t) = f(x) - \derivby t \psi(t) \ .
 \end{equation*}
As $\expectat {p(t)} {S p(t)} = 0$, $\derivby t \psi(t) = \expectat {p(t)} f$ and we have that \Cref{{eq:exp-model}} is the solution of the natural flow starting at $p_0$.

Notice that the natural gradient of $f(p) = \expectat p f$ is precisely
   \begin{equation*}
     \grad f(p) = \nabla f(p) - \expectat p {\nabla f(p) } = F(p) \ . 
   \end{equation*}
   This is the solution of a gradient flow equation.

\bigskip
\paragraph*{Example: Gradient flow of the entropy}
Consider the equation
\begin{equation*}
Sp(t) = \grad \entropyof{p(t)} = - \log p(t) - \entropyof {p(t)} \ ,  
\end{equation*}
or
\begin{equation*}
  \derivby t \log p(t) = - \log p(t) - \entropyof {p(t)} \ .
\end{equation*}

By setting $v(x,t) = \log p(x;t)$ the equation becomes
\begin{equation*}
  \dot v(x;t) = - v(x;t) + \sum_{x \in \Omega} v(x;t) \euler^{v(x;t)} \ .
\end{equation*}

Let us look for a solution of the form
\begin{equation*}
  p(x;t) = \expof{a(t) \log p_0(x) - \psi(t)} \ .
\end{equation*}
with $a(0) = 1$, hence $p(0) = p_0$ and $\psi(0) = 0$.

In this case,
\begin{equation*}
  Sp(t) = \dot a(t) \log p_0 - \dot \psi(t) = \dot a(t) (\log p_0 - \expectat {p(t)}{\log p_0})
\end{equation*}
and
\begin{multline*}
  \entropyof{p(t)} = -\expectat {p(t)}{a(t) \log p_0 - \psi(t)} = \\ - a(t) \expectat {p(t)} {\log p_0} + \psi(t)  \ .
\end{multline*}

Plugging the previous computations into the equation,
\begin{multline*}
  \dot a(t) (\log p_0 - \expectat {p(t)}{\log p_0}) = \\ - (a(t) \log p_0 - \psi(t)) -
  (- \expectat {p(t)} {\log p_0} + \psi(t) )
\end{multline*}
which is satisfied if $\dot a(t) = -a(t)$. As $a(0)=1$,
\begin{equation*}
  p(x;t) \propto \expof{\euler^{-t} \log p_0(x)} = p_0(x)^{\euler^{-t}} \ .
\end{equation*}

In conclusion, the natural gradient flow of the entropy is an exponential family with parameter $a(t) = \euler^{-t}$, sufficient statistics $\log p_0$ and cumulant function $\psi$. 

The same exponential family as before is, in the canonical parameter,
\begin{equation*}
  p(\theta) = \expof{\theta \log p_0 - \Psi(\theta)} \propto p_0^\theta \ , \quad \theta > 0 \ .
\end{equation*}
The differential score is
\begin{equation*}
  Sp(\theta) = \log p_0 - \dot\Psi(\theta) = \log p_0 - \expectat {p(t)}{\log p_0} \ . 
\end{equation*}

\paragraph*{Example: KL-divergence}
  Consider the Kulback-Leibler divergences $p \mapsto \KL p {p_0}$ and $p \mapsto \KL {p_0} p$ and compute the respective natural gradient.

In the first case,
\begin{multline*}
    \pderivby {p(x)} \KL p {p_0} = \\ \pderivby {p(x)} \sum_{y \in \Omega} p(y) \log \frac {p(y)}{p_0(y)} = \log \frac {p(x)}{p_0(x)} + 1 \ ,
  \end{multline*}
  so that the natural gradient is
  \begin{equation*}
    \grad (p \mapsto \KL p {p_0}) = \left(p \mapsto \log \frac p {p_0} - \KL p {p_0}\right) \ .
  \end{equation*}
  The solution on the gradient flow is similar to the solution for the entropy.
  
  In the second case,
 \begin{multline*}
    \pderivby {p(x)} \KL {p_0} p = \\ \pderivby {p(x)} \sum_{y \in \Omega} p_0(y) \log \frac {p_0(y)}{p(y)} = 1 - \frac{p_0(x)}{p(x)} ,
  \end{multline*}
  so that the natural gradient is
  \begin{equation*}
    \grad (p \mapsto \KL {p_0} p ) = \left(p \mapsto 1 - \frac{p_0(x)}{p(x)} \right)\ .
  \end{equation*}

The gradient flow equation $Sp -= - \grad \KL {p_0}p$ is
\begin{equation*}
  \frac {\dot p(t)} {p(t)}= \frac{p_0}{p(t)} - 1 \quad \text{that is} \quad 
\dot p(t) = p_0 - p(t) \ ,
\end{equation*}
and the solution is
\begin{equation*}
  p(t) = p_0 + (p(0) - p_0) \euler^{-t} \ .
\end{equation*}

\paragraph*{Comment.} It is remarkable that the two variables $p$ and $q$ in $\KL p q$ are clearly associated with two different affine geometries on the statistical bundle. In fact, it is possible to derive the structure of IG from the divergence. I do not discuss this approach here and refer to the general monographs for this development.

\begin{proposition}\label{prop:gradflow}
\begin{enumerate}\ 
\item \label{item:gradflow1}
Let $f \colon \osimplexof{\Omega} \to \R$ be bounded and let $\R_+ \ni t \mapsto p(t)$ be a solution of the natural gradient flow
  \begin{equation}\label{eq:gradflow}
Sp(t) = -\grad f(p(t)) \ , \quad t > 0 \ .   
  \end{equation}
The value of $f$ along the solution, $t \mapsto f(p(t))$, is decreasing and bounded below by $\min f$. 
\item \label{item:gradflow2} Moreover, if $t \mapsto \normat {p(t)} {\grad f(p(t))} ^2 = \normat {p(t)} {Sp(t)} ^2$ is uniformly continuous, then $\lim_{t \to \infty} \normat {p(t)} {Sp(t)} = 0$. 
\item \label{item:gradflow3} Assume in addition that $p \mapsto \normat p {\grad f(p)}$ continuously extend to a the full simplex as a function $L \colon \simplexof{\Omega} \to \R$ and there exists a level set $\setof{p \in \osimplexof{\Omega}}{L(p) \le a}$ where $L$ has an unique zero $\bar p \in \simplexof{\Omega}$. In such a case, $f(p(0)) \le \alpha$ implies $\lim_{t\to\infty} p(t) = \bar p$.
\end{enumerate}
\end{proposition}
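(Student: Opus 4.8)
The plan is to handle the three items in order, the first furnishing the energy-dissipation identity that powers the rest. For item~\ref{item:gradflow1} I would differentiate $t \mapsto f(p(t))$ by the defining property of the natural gradient and then insert the flow equation $Sp(t) = -\grad f(p(t))$:
\begin{equation*}
  \derivby t f(p(t)) = \scalarat {p(t)} {\grad f(p(t))} {Sp(t)} = - \normat {p(t)} {\grad f(p(t))}^2 \le 0 \ .
\end{equation*}
Hence $t \mapsto f(p(t))$ is non-increasing, and since $f$ is bounded it stays $\ge \min f$; a bounded monotone function converges, so $f(p(t)) \to f_\infty$ for some finite $f_\infty \ge \min f$.

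For item~\ref{item:gradflow2} I would integrate this identity,
\begin{equation*}
  \int_0^T \normat {p(s)} {Sp(s)}^2 \, ds = f(p(0)) - f(p(T)) \le f(p(0)) - f_\infty \ ,
\end{equation*}
so $g(t) := \normat {p(t)} {Sp(t)}^2 \ge 0$ is integrable on $\R_{\ge 0}$. The conclusion $g(t) \to 0$ is then Barbalat's lemma: a non-negative, uniformly continuous function with finite integral tends to zero. Uniform continuity is precisely the standing hypothesis, so the step is immediate; if one prefers a self-contained argument, suppose $g(t_n) \ge \varepsilon$ along some $t_n \to \infty$, use uniform continuity to get $g \ge \varepsilon/2$ on intervals of a fixed positive length around each $t_n$, and obtain a contradiction with integrability.

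For item~\ref{item:gradflow3} I would pass to the closed simplex $\simplexof{\Omega}$, which is compact. By item~\ref{item:gradflow1} the monotonicity of $f$ confines the whole trajectory to the sub-level set fixed by $f(p(0)) \le \alpha$, on (the closure of) which the hypothesis says $L$ vanishes only at $\bar p$; by item~\ref{item:gradflow2} and $\normat {p(t)} {Sp(t)} = L(p(t))$ we have $L(p(t)) \to 0$. Now let $p^*$ be any accumulation point of the trajectory in $\simplexof{\Omega}$; continuity of $L$ gives $L(p^*) = 0$, whence $p^* = \bar p$ by uniqueness. A bounded curve whose only accumulation point is $\bar p$ converges to $\bar p$, which is the claim.

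The crux is item~\ref{item:gradflow2}: integrability of $g$ by itself does not force $g(t) \to 0$, since the integrand could carry increasingly thin, tall spikes, and it is exactly to rule this out that uniform continuity is assumed---Barbalat's lemma is the real content. A secondary point is that the flow lives in the open simplex whereas $\bar p$ may lie on the boundary; the continuous extension of $L$ to $\simplexof{\Omega}$ is what lets the compactness argument of item~\ref{item:gradflow3} pin the limit down on the closed simplex.
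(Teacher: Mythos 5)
Your proof is correct and takes essentially the same route as the paper's: differentiate $f(p(t))$ via the defining property of the natural gradient to get $\frac{d}{dt} f(p(t)) = -\normat{p(t)}{\grad f(p(t))}^2$, integrate this dissipation identity to obtain integrability of $\normat{p(t)}{Sp(t)}^2$, invoke Barbalat's lemma (exactly the paper's step) for the second item, and combine $L(p(t)) \to 0$ with confinement to the sub-level set and uniqueness of the zero for the third. The only difference is cosmetic: you spell out the compactness/accumulation-point argument on the closed simplex that the paper's very terse third step leaves implicit, which is a useful clarification rather than a departure.
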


\begin{proof} \emph{\Cref{item:gradflow1}.} From \eqref{eq:gradflow} and the definition of $\grad$, 
  \begin{multline*}
    \derivby t f(p(t)) = \scalarat {p(t)} {\grad f(p(t))} {Sp(t)} = \\ - \normat {p(t)} {\grad f(p(t))} ^2 = - \normat {p(t)} {Sp(t)} ^2 \ ,
  \end{multline*}
hence
\begin{multline*}
  f(p(t)) - f(p(0)) = \\ - \int_0^t \normat {p(t)} {\grad f(p(t))} ^2 \ dt = - \int_0^t \normat {p(t)} {Sp(t)}^2 \ dt \ ,
\end{multline*}
so that $t \mapsto f(p(t))$ is decreasing and converging to a limit $\alpha \ge \min f$. 

\emph{\Cref{item:gradflow2}.} It follows from the boundedness below of $f$ that $\alpha$ is finite and moreover
\begin{multline*}
 \int_0^\infty \normat {p(t)} {\grad f(p(t))} ^2 \ dt =  \\ \int_0^\infty \normat {p(t)} {Sp(t)}^2 \ dt =  f(p_0) - \alpha \leq \max f < \infty \ .
\end{multline*}
If $t \mapsto \normat {p(t)} {\grad f(p(t))} ^2 = \normat {p(t)} {Sp(t)} ^2$ is uniformly continuous, it follows from Barbalat's lemma that $\lim_{t \to \infty} \normat {p(t)} {\grad f(p(t))} = \lim_{t \to \infty} \normat {p(t)} {Sp(t)} = 0$. 

\emph{\Cref{item:gradflow3}.} It holds $\lim_{t \to \infty} \normat {p(t)} {\grad f(p(t))} = \lim_{t \to \infty} L(p(t)) = 0$. Every solution that starts inside $\set{f \le a}$ stays in the level set. If $p \mapsto L(p)$ has a unique isolated zero at $\bar p$, then $\lim_{t\to\infty} p(t) = \bar p$.
\end{proof}

\paragraph*{Example: Expected value.} Let $f \colon \Omega \to \R$ have a unique maximum at $\bar x$ and relax $F(p) = \expectat p f$, $p \in \osimplexof{\Omega}$. It holds $\grad F(p) = f - \expectat p f$. The function $F \colon \osimplexof{\Omega}$ is bounded and $p \mapsto \normat p {\grad f}^2 = \varat p f$ is bounded and continuous on $\simplexof{\Omega}$. The trajectory is the exponential family $p(t) = \euler^{tf - \psi(t)} p_0$ and $Sp(t) = f - \psi'(t)$ is uniformly continuous because its derivative $\derivby t Sp(t) = - \psi''(t) = \varat {p(t)} f$ is bounded.

The natural gradient flow of the expected value has been intensively used as an optimization algorithm, see \citet*{malago|matteucci|pistone:2009NIPS,malago|matteucci|pistone:2011CEC,malago|matteucci|pistone:2011FOGA,malago|matteucci|pistone:2013CEC}.
\section{Connections}
\label{sec:finite-state-space-connections}

I am now going to discuss now the notion of of differentiable function on the statistical bundle which provides a sort of second order calculus. Cf.~\citet{kass|vos:1997}. 

For each random variable $U \in S_p\osimplexof{\Omega} = B_p$, it holds
\begin{equation*}
\expectat q {U - \expectat q U} = 0 \quad \text{and} \quad \expectat q{\frac pq U} = 0 \ ,
\end{equation*}
so that both $U - \expectat q U$ and $\frac pq U$ belong to $S_q\osimplexof{\Omega} = L^2_0(q)$. This prompts for the following definition. 

\begin{definition} [e- and m-transport]\ 
\label{def:transport-em}
  \begin{enumerate}
  \item The \emph{exponential transport}, or \emph{e-transport}, is the family of linear mappings defined for each $p,q \in \osimplexof{\Omega}$ by
    \begin{equation*}
      \etransport p q \colon S_p\osimplexof{\Omega} \ni U \mapsto U - \expectat q U \in S_q\osimplexof{\Omega} \ .
    \end{equation*}
  \item The \emph{mixture transport}, or \emph{m-transport}, is the family of linear mappings  for each $p,q \in \osimplexof{\Omega}$ by
    \begin{equation*}
      \mtransport p q \colon S_p\osimplexof{\Omega} \ni U \mapsto \frac p q U \in S_q\osimplexof{\Omega} \ .
    \end{equation*}
  \end{enumerate}
\end{definition}

Let us check now that the e-transport and the m-trasport are semi-groups of affine transformations which are compatible with the statistical bundle and are dual of each other with respect to the scalar product on each fiber.

\begin{theorem} \label{prop:affine-transports}The following properties hold for all $p, q, r \in \osimplexof{\Omega}$.
  \begin{enumerate}
  \item Exponential semi-group property: $\etransport q r \etransport p q = \etransport p r$.
  \item Mixture semi-group property: $\mtransport q r \mtransport p q = \mtransport p r$.
  \item Duality: $\scalarat q {\etransport p q U} {V} = \scalarat p {U} {\mtransport q p V}$, $U \in S_p \osimplexof{\Omega}$ and $V \in S_q \osimplexof{\Omega}$.
  \item \label{item:em-conservation} Conservation of the scalar product: $\scalarat q {\etransport p q U}{\mtransport p q V} = \scalarat p U V$, $U,V \in S_p\osimplexof{\Omega}$.
  \end{enumerate}
  \end{theorem}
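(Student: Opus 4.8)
The plan is to establish each of the four properties by direct substitution of the definitions in \Cref{def:transport-em}, relying on only two facts already recorded just above the definition: that $\expectat q U$ is a scalar (so applying a further expectation to it returns the same scalar), and that every vector in a fiber $S_p\osimplexof{\Omega} = L^2_0(p)$ is $p$-centered, i.e.\ $\expectat p U = 0$. The well-definedness of the two transports---that $\etransport p q U$ and $\mtransport p q V$ indeed land in $S_q\osimplexof{\Omega}$---is exactly the observation made immediately before the definition, so nothing further is needed on that point.

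For the two semi-group properties I would simply compose the maps. For the e-transport, writing $\etransport q r (\etransport p q U) = (U - \expectat q U) - \expectat r {U - \expectat q U}$ and using that $\expectat q U$ is constant collapses the two copies of $\expectat q U$, leaving $U - \expectat r U = \etransport p r U$. For the m-transport the multiplicative factors telescope, $\frac q r \cdot \frac p q = \frac p r$, giving $\mtransport q r \mtransport p q = \mtransport p r$ at once.

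For the duality I would compute both sides and recognize a common value $\expectat q {UV}$. On the left, expanding $\etransport p q U = U - \expectat q U$ produces a cross term $\expectat q U \cdot \expectat q V$ that vanishes because $V \in S_q\osimplexof{\Omega}$ is $q$-centered; on the right, the factor $\frac q p$ hidden in $\mtransport q p V$ converts a $p$-expectation into a $q$-expectation, yielding the same $\expectat q {UV}$. Finally, conservation of the scalar product follows cleanly by combining duality with the mixture semi-group property: taking $W = \mtransport p q V \in S_q\osimplexof{\Omega}$ in the duality identity and then invoking property~2 gives $\scalarat q {\etransport p q U}{\mtransport p q V} = \scalarat p U {\mtransport q p \mtransport p q V} = \scalarat p U {\mtransport p p V} = \scalarat p U V$, since $\mtransport p p$ is the identity. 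A one-line direct computation, using $\expectat p V = 0$, works equally well.

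None of the four steps presents a genuine obstacle; the only point requiring care is the bookkeeping of which base point each expectation and each centering condition refers to, since every cancellation depends on applying $\expectat p \cdot = 0$ or $\expectat q \cdot = 0$ to the correct vector in the correct fiber.
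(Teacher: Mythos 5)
Your proposal is correct and follows essentially the same route as the paper: direct verification from the definitions, duality via expansion with the cross term killed by $\expectat q V = 0$, and conservation of the scalar product deduced from duality combined with a semi-group property. The only (inessential) difference is that you invoke the mixture semi-group property to cancel $\mtransport q p \mtransport p q$, whereas the paper moves the e-transport across the pairing and cancels $\etransport q p \etransport p q$ --- mirror images of the same argument.
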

  \begin{proof}
These are simple checks of the definitions. For example, the duality follows from
\begin{multline*}
  \scalarat q {\etransport p q U} {V} = \\ \expectat q {(U - \expectat q U)V} =  \expectat q {UV} - \expectat q U \expectat q V = \\ \expectat q {UV} = \expectat p {U\left(\frac q p V\right)} = \\ \scalarat p {U} {\mtransport q p V} \ . 
\end{multline*}

The conservation of the scalar product follows from the duality and the semi-group property:
\begin{equation*}
  \scalarat q {\etransport p q U}{\mtransport p q V} = \scalarat q {\etransport q p \etransport p q U}{V} = \scalarat p U V \ .
\end{equation*}

  \end{proof}

Each transport defines a section of the statistical bundle: given $U \in S_q\osimplexof{\Omega}$, one has the sections
\begin{equation*}
  p \mapsto \etransport q p U \quad \text{and} \quad p \mapsto \mtransport q p U \ ,
\end{equation*}
and can compute their respective flows as follows. 

\begin{proposition}\label{prop:em-flows} Let be given a random variable $U \in \ S_q\osimplexof{\Omega}$.
  \begin{enumerate}
  \item \label{item:e-flow} The flow of the section $p \mapsto \etransport q p U$, i.e., the solution of 
    \begin{equation*}
      Sp(t) = \etransport q {p(t)} U \ , \quad p(0) = p \ ,
    \end{equation*}
 is 
 \begin{equation*}
  \osimplexof{\Omega} \times \R \ni (p,t) \mapsto \euler^{t(\etransport q p U) - \psi(t)} \cdot p \ , \quad
\end{equation*}
with  $\psi(t) = \logof{\expectat p {\euler^{\etransport q p U}}}$.
\item \label{item:m-flow} The flow of the section $p \mapsto \mtransport q p U$, $U \in \ S_q\osimplexof{\Omega}$ i.e., the solution of 
    \begin{equation*}
      Sp(t) = \mtransport q {p(t)} U, \quad p(0) = p,
    \end{equation*}
 is 
 \begin{equation*}
  \osimplexof{\Omega} \times I \ni (p,t) \mapsto (1+t\mtransport q p U)p \ ,
 \end{equation*}
where $I = ]- (\max \mtransport q p U)^{-1}, - (\min \mtransport q p U)^{-1}[$.
\end{enumerate}
\end{proposition}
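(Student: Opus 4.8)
The plan is to treat both parts as initial-value problems for the score equation $Sp(t) = \derivby t \logof{p(t)}$ and to exploit uniqueness: the associated system of ordinary differential equations $\derivby t p(x;t) = p(x;t) F(x,\bm p(t))$ has a right-hand side $G(x,\bm p) = p(x) F(x,\bm p)$ that is smooth in $\bm p$ on $\osimplexof{\Omega}$---polynomial in the $e$-case and in fact constant in the $m$-case---hence locally Lipschitz, so a curve satisfying the equation together with $p(0)=p$ is \emph{the} solution. It therefore suffices to guess the trajectory and verify it. The form of each transport dictates the guess: the $e$-transport adds a constant, pointing to a log-affine (exponential) family, whereas the $m$-transport multiplies, pointing to an affine straight-line trajectory.

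For \Cref{item:e-flow} I would take the exponential family $p(t) = \euler^{tU - \psi(t)} \cdot p$ with $\psi(t) = \logof{\expectat p {\euler^{tU}}}$, which is strictly positive, has total mass $1$, is defined for every $t \in \R$ because $\Omega$ is finite, and equals $p$ at $t = 0$ since $\psi(0)=0$. Taking logarithms gives $Sp(t) = U - \psi'(t)$, and differentiating the cumulant function yields $\psi'(t) = \expectat p {U \euler^{tU}}/\expectat p {\euler^{tU}} = \expectat {p(t)} U$. Hence $Sp(t) = U - \expectat {p(t)} U = \etransport q {p(t)} U$, which is the claimed equation. I would reconcile this with the formula as stated by noting that replacing $U$ by $\etransport q p U = U - \expectat p U$ in the exponent changes $\psi$ only by the additive constant $t\,\expectat p U$ and so leaves the curve unchanged.

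For \Cref{item:m-flow} I would take the affine trajectory $p(t) = (1 + t\,\mtransport q p U)\,p$, that is $p(x;t) = p(x) + t\,q(x) U(x)$ since $\mtransport q p U = \frac q p U$. Then $\dot p(t) = qU$ is constant, so $Sp(t) = \dot p(t)/p(t) = \frac q {p(t)} U = \mtransport q {p(t)} U$ and $p(0)=p$, as required. The one substantive check that the curve stays in $\affineof{\Omega}$ uses that $U$ is $q$-centered: $\sum_x p(x;t) = 1 + t \sum_x q(x) U(x) = 1 + t\,\expectat q U = 1$.

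The main---and essentially only nontrivial---point is to determine the maximal interval $I$ on which $p(t)$ is strictly positive. Setting $W = \mtransport q p U$, positivity of $p(x;t) = (1 + tW(x)) p(x)$ amounts to $1 + tW(x) > 0$ for all $x$. Because $\expectat p W = \expectat p {\frac q p U} = \expectat q U = 0$, the centered variable $W$ assumes both signs whenever $U \ne 0$, so $\min W < 0 < \max W$; solving the binding inequality separately for $t>0$ and $t<0$ then yields exactly $I = \,]-(\max W)^{-1}, -(\min W)^{-1}[\,$. I expect the two score identities to be routine differentiations, with this sign-bookkeeping for the endpoints of $I$ being the only step that requires attention.
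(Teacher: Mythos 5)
Your proposal is correct and takes essentially the same route as the paper: guess the exponential-family and straight-line trajectories, verify the score equation by direct differentiation, and derive the interval $I$ from the same sign analysis of the $p$-centered variable $\mtransport q p U$ (the paper, like you, notes $\expectat{p}{\mtransport q p U}=0$ forces both signs). Your two small additions are harmless refinements rather than a different method: the explicit local-Lipschitz uniqueness remark makes precise why verification suffices, and your observation that putting $tU$ instead of $t\,\etransport q p U$ in the exponent only shifts $\psi$ by $t\expectat{p}{U}$ replaces the paper's appeal to the semigroup identity $\etransport p {p(t)}\etransport q p U = \etransport q {p(t)} U$, while also quietly correcting the missing factor $t$ in the statement's formula for $\psi(t)$.
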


\begin{proof} \emph{\Cref{item:e-flow}.} This is a direct check:
    \begin{multline*}
      \derivby t (t\etransport q p U - \psi(t))  = \etransport q p U - \dot \psi(t) = \\ \etransport q p U - \expectat {p(t)} {\etransport q p U} = \etransport p {p(t)} \etransport q p U =  \etransport q {p(t)} U \ . 
    \end{multline*}

\emph{\Cref{item:m-flow}.} Assume $U \ne 0$ and let $V(x) = \mtransport q p U(x)$. As $\expectat p V = 0$, it holds both $V(x) < 0$ and $V(x) > 0$. In the first case, $1 + tV(x) > 0$ if $t \le 0$ or $t>0$ and $t < -(\min V)^{-1} \le -V(x)^{-1}$. Similarly in the other case. If $t \in I$, then $p(t) = (1+tV)p \in \osimplexof{\Omega}$ and
    \begin{multline*}
      \derivby t \logof{(1+t\mtransport q p)p} = \frac{\mtransport q p U}{1+t\mtransport q p U} = \\ \frac p {p(t)} \mtransport q p U = \mtransport p {p(t)} \mtransport q p U = \mtransport q {p(t)} U \ .
    \end{multline*}
\end{proof}

The proposition, justifies the names given to the transports.

\bigskip
Other transports are of interest. In particular, look for an \emph{isometry}
\begin{equation*}
\transport p q \colon S_p\osimplexof{\Omega} \to S_q\osimplexof{\Omega} \ , 
\end{equation*}
so that $\scalarat q {\transport p q U}{\transport p q V} = \scalarat p U V$. Compare with \cref{item:em-conservation} of \cref{prop:affine-transports}. The remaining part of this section is essentially a long exercise.

Note that $\normat q {\sqrt{\frac pq} U}^2 = \normat p U ^2$ for $U \in S_p\osimplexof{\Omega}$, but $\expectat q {\sqrt{\frac pq} U} = \expectat p {\sqrt{pq} U} = \covat p {\sqrt{pq}}{U}$ would not be zero in general. Hence, there is a linear mapping of the form
\begin{equation*}
  S_p\osimplexof{\Omega} \ni U \mapsto \sqrt{\frac pq} U + A\expectat q {\sqrt{\frac pq} U} \ .
\end{equation*}
The expected value at $q$ is 
\begin{equation*}
  \expectat q {\sqrt{\frac pq} U + A\expectat q {\sqrt{\frac pq} U}} = \left(1+\expectat q A\right)\expectat q {\sqrt{\frac pq} U} \ ,
\end{equation*}
which is zero if $\expectat q A = -1$. Under this condition, it holds $\sqrt{\frac pq} U + A\expectat q {\sqrt{\frac pq} U} \in S_q\osimplexof{\Omega}$.

\begin{widetext} Let us compute the squared norm:
\begin{multline*}
  \normat q {\sqrt{\frac pq} U + A\expectat q {\sqrt{\frac pq} U}}^2 = \normat p U ^2 + 2\expectat q {\sqrt{\frac pq} U A} \expectat q {\sqrt{\frac pq} U} + \expectat q {A^2} \expectat q {\sqrt{\frac pq} U}^2 = \\ \normat p U ^2 + \left(2\expectat q {\sqrt{\frac pq} U A}  + \expectat q {A^2} \expectat q {\sqrt{\frac pq} U} \right) \expectat q {\sqrt{\frac pq} U} = \normat p U ^2 + \expectat q {\sqrt{\frac pq} U \left(2A + \expectat q {A^2} \right) } \expectat q {\sqrt{\frac pq} U} \ .
\end{multline*}

Taking $A = - (1+\sqrt{\frac pq})(1 + \expectat q {\sqrt{\frac pq}})$ one has both $\expectat q A = -1$ and $\expectat q {\sqrt{\frac pq} U \left(2A + \expectat q {A^2} \right) } =$
\begin{multline*}
 \expectat q {\sqrt{\frac pq} U \left(-2\frac{1+\sqrt{\frac pq}}{1 + \expectat q {\sqrt{\frac pq}}} + \frac{\expectat q {\left(1+ \sqrt{\frac pq}\right)^2}}{\left(1+\expectat q {\sqrt{\frac pq}}\right)^2} \right) } = \expectat q {\sqrt{\frac pq} U \left(-2\frac{1+\sqrt{\frac pq}}{1 + \expectat q {\sqrt{\frac pq}}} + 2 \frac{1+ \expectat q {\sqrt{\frac pq}}}{\left(1+\expectat q {\sqrt{\frac pq}}\right)^2} \right) } = \\ \expectat q {\sqrt{\frac pq} U \left(-2\frac{1+\sqrt{\frac pq}}{1 + \expectat q {\sqrt{\frac pq}}} + 2 \frac{1}{1+\expectat q {\sqrt{\frac pq}}} \right) } = -2 \frac{\expectat q {\sqrt{\frac pq} U \sqrt{\frac pq} }}{1+\expectat q {\sqrt{\frac pq}}} = 0 \ .
\end{multline*}

The previous computation justifies the following definition and proposition.

\begin{definition}
The \emph{Hilbert transport}, or \emph{h-transport}, is the family of linear mappings
    \begin{equation*}
  \transport p q \colon S_p\osimplexof{\Omega} \ni U \mapsto \sqrt{\frac pq} U  -  \left(1 +  \expectat q {\sqrt{\frac pq}}\right)^{-1} \left(1 + \sqrt{\frac pq}\right) \expectat q {\sqrt{\frac pq}U} \in S_q\osimplexof{\Omega} \ ,
\end{equation*}
for all $p,q \in \osimplexof{\Omega}$.
\end{definition}

\begin{proposition} \label{prop:h-transport} The following properties hold for all $p,q \in \osimplexof{\Omega}$.
  \begin{enumerate}
  \item \label{item:h-transport-1} Inverse: $\transport q p \transport p q U = U$.
  \item \label{item:h-transport-2} Isometry: $\scalarat q {\transport p q U}{\transport p q V} = \scalarat p U V$.
  \end{enumerate}
  \end{proposition}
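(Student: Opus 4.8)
The plan is to dispatch the isometry \cref{item:h-transport-2} first, since the hard computational work is already done in the \texttt{widetext} display preceding the definition, and then to treat the inverse \cref{item:h-transport-1} by a direct substitution that exploits two symmetry identities between the roles of $p$ and $q$.

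For \cref{item:h-transport-2} I would first observe that the computation carried out just before the definition establishes the \emph{norm} identity $\normat q {\transport p q U}^2 = \normat p U^2$ for every $U \in S_p\osimplexof{\Omega}$. Since $\transport p q$ is a \emph{linear} map, the full bilinear statement follows by polarization: apply the norm identity to $U+V$ and to $U-V$, use $\transport p q(U\pm V) = \transport p q U \pm \transport p q V$, and combine via $\scalarof a b = \frac14\left(\normof{a+b}^2 - \normof{a-b}^2\right)$ to get $\scalarat q {\transport p q U}{\transport p q V} = \frac14\left(\normat p {U+V}^2 - \normat p {U-V}^2\right) = \scalarat p U V$. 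No further computation is needed.

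For \cref{item:h-transport-1}, writing $r = \sqrt{\frac pq}$ (so that $\sqrt{\frac qp} = \frac1r$) and $c = \expectat q r$, I would record three elementary identities that make the whole cancellation work: (i) $\expectat q {r^2} = \expectat q {\frac pq} = \expectat p 1 = 1$; (ii) the symmetry $\expectat p {\frac1r W} = \expectat q {r W} = \sum_x W(x)\sqrt{p(x)q(x)}$ for any $W$, which in particular gives $\expectat p{\frac1r} = \expectat q r = c$, so that $\transport q p$ carries the \emph{same} denominator $1+c$ as $\transport p q$; and (iii) $\expectat q{r^2 U} = \expectat p U = 0$, valid because $U \in S_p\osimplexof{\Omega}$ is $p$-centered. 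With these, set $V = \transport p q U = rU - \frac{(1+r)m}{1+c}$ where $m = \expectat q {rU}$. A short calculation using (i) and (iii) shows $\expectat q {rV} = \expectat q{r^2U} - \frac{m}{1+c}(\expectat q r + \expectat q{r^2}) = -m$, and then, substituting into $\transport q p V = \frac1r V - \frac{1+\frac1r}{1+c}\expectat q {rV}$, the two correction terms cancel exactly, leaving $\transport q p V = U$.

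The isometry is essentially free given the earlier display, so the only real work is \cref{item:h-transport-1}. The main obstacle there is not any single hard estimate but rather spotting the symmetry identity (ii), namely that the affinity coefficient $c = \sum_x\sqrt{p(x)q(x)}$ and the pairing $\sum_x W(x)\sqrt{p(x)q(x)}$ are invariant under exchanging $p$ and $q$; this is what guarantees that $\transport q p$ has the same normalizing constant $1+c$ as $\transport p q$ and is what ultimately forces the clean cancellation. The $p$-centering of $U$ in (iii) is the other indispensable ingredient, since it annihilates the $\expectat q{r^2 U}$ term that would otherwise survive.
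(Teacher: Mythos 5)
Your proposal is correct and takes essentially the same route as the paper: the isometry is obtained, exactly as there, from the norm conservation already established in the preceding computation together with linearity (polarization), and the inverse property is the same direct substitution--cancellation computation with $V=\transport p q U$, resting on $\expectat p U = 0$ and the coincidence of the normalizing constants. Your only (harmless) stylistic difference is making the symmetric pairing $\sum_x W(x)\sqrt{p(x)q(x)}$ and the shared normalizer $1+c$ explicit via identity (ii), where the paper instead lets the factor $1+\expectat p {\sqrt{q/p}}$ cancel internally without ever naming it.
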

  \begin{proof}
 \emph{\Cref{item:h-transport-1}.} It is a long computation. Let $V = \transport p q U$, so that
     \begin{multline*}
\sqrt \frac qp V = 
\sqrt{\frac qp}\left(\sqrt{\frac pq} U  -  \left(1 +  \expectat q {\sqrt{\frac pq}}\right)^{-1} \left(1 + \sqrt{\frac pq}\right) \expectat q {\sqrt{\frac pq}U}\right) = \\ U  -  \left(1 +  \expectat q {\sqrt{\frac pq}}\right)^{-1} \left(1 + \sqrt{\frac qp}\right) \expectat q {\sqrt{\frac pq}U} \ ,
      \end{multline*}
      \begin{multline*}
 \expectat p {\sqrt \frac qp V} = \expectat p {U  -  \left(1 +  \expectat q {\sqrt{\frac pq}}\right)^{-1} \left(1 + \sqrt{\frac qp}\right) \expectat q {\sqrt{\frac pq}U}} = \\ -  \left(1 +  \expectat q {\sqrt{\frac pq}}\right)^{-1} \expectat p {1 + \sqrt{\frac qp}} \expectat q {\sqrt{\frac pq}U} = \\ -  \left(1 +  \expectat q {\sqrt{\frac pq}}\right)^{-1} \left(1 + \expectat p {\sqrt{\frac qp}}\right) \expectat q {\sqrt{\frac pq}U} \ ,
      \end{multline*}
      \begin{multline*}
\left(1+\expectat p {\sqrt\frac qp}\right)^{-1} \left(1+ \sqrt\frac qp \right) \expectat p {\sqrt \frac qp V} = \\
- \left(1+\expectat p {\sqrt\frac qp}\right)^{-1} \left(1+ \sqrt\frac qp \right) \left(1 +  \expectat q {\sqrt{\frac pq}}\right)^{-1} \left(1 + \expectat p {\sqrt{\frac qp}}\right) \expectat q {\sqrt{\frac pq}U} = \\
-  \left(1+ \sqrt\frac qp \right) \left(1 +  \expectat q {\sqrt{\frac pq}}\right)^{-1} \expectat q {\sqrt{\frac pq}U} \ ,
      \end{multline*}
and the required equality follows.
 \emph{\Cref{item:h-transport-2}.} The conservation of the norm has been already proved above. The conservation of the scalar product follows from that and from the linearity.
  \end{proof}

  Let us consider now the flow induced by the h-transport.
  
  \begin{proposition}
    Given $p \in \osimplexof{\Omega}$ and $U \in S_p\osimplexof{\Omega}$ with $\expectat p {U^2} = 1$, consider the mapping on an open interval $I$ containing 0 defined by $I \ni t \mapsto \left(\cosof{\frac t2}+\sinof{\frac t2} U\right)^2 \cdot p$, with moreover $\cosof{\frac t2}+\sinof{\frac t2} U > 0$ for all $t \in I$. Such a mapping is a regular curve in $\osimplexof{\Omega}$ such that $p(0) = p$ and $Sp(t) = \transport p {p(t)} U$.
  \end{proposition}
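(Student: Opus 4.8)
The plan is to reduce all three assertions to the two moment conditions $\expectat p U = 0$ and $\expectat p {U^2} = 1$. Abbreviate $c = \cosof{\frac t2}$ and $s = \sinof{\frac t2}$, so that $p(x;t) = (c + sU(x))^2\, p(x)$. Then $p(0) = p$ is immediate since $c=1$, $s=0$ at $t=0$. Normalization is a one-line expansion of the square,
\begin{equation*}
  \sum_{x\in\Omega} p(x;t) = c^2 + 2cs\,\expectat p U + s^2\,\expectat p {U^2} = c^2 + s^2 = 1 \ ,
\end{equation*}
while strict positivity is exactly the standing hypothesis $c + sU > 0$ on $I$; hence $p(t) \in \osimplexof{\Omega}$ for all $t \in I$, and the curve is smooth in $t$ because $c+sU$ stays bounded away from $0$.

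Next I would compute the score directly. Since $\logof{p(x;t)} = \logof{p(x)} + 2\logof{c + sU(x)}$, differentiating in $t$ gives
\begin{equation*}
  Sp(x;t) = \derivby t \logof{p(x;t)} = \frac{\cosof{\frac t2}\, U(x) - \sinof{\frac t2}}{\cosof{\frac t2} + \sinof{\frac t2}\, U(x)} \ ,
\end{equation*}
which at $t=0$ returns $Sp(0) = U$, consistent with $\transport p p$ being the identity.

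The core of the argument is to evaluate $\transport p {p(t)} U$ from its defining formula and match it with the expression above. The key observation is that $\sqrt{\frac p{p(t)}} = (c + sU)^{-1}$, so that the weight $(c+sU)^2 p$ cancels one power of $(c+sU)$ when taking expectations at $p(t)$. This yields, again using $\expectat p U = 0$ and $\expectat p {U^2} = 1$,
\begin{align*}
  \expectat {p(t)}{\sqrt{\tfrac p{p(t)}}} &= \expectat p {c + sU} = \cosof{\tfrac t2} \ , \\
  \expectat {p(t)}{\sqrt{\tfrac p{p(t)}}\,U} &= \expectat p {(c + sU)U} = \sinof{\tfrac t2} \ .
\end{align*}
Substituting these two values into the definition of the h-transport and clearing the common denominator $c+sU$ reduces $\transport p {p(t)} U$ to $\frac1{c+sU}\bigl[U(1 - s^2/(1+c)) - s\bigr]$; the identity $s^2 = (1-c)(1+c)$ then collapses $1 - s^2/(1+c)$ to $c$, producing exactly $(cU - s)/(c+sU) = Sp(t)$.

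Finally, regularity comes for free: by the isometry property of the h-transport (\Cref{prop:h-transport}, \Cref{item:h-transport-2}) one has $\normat {p(t)}{Sp(t)} = \normat {p(t)}{\transport p {p(t)} U} = \normat p U = 1 \neq 0$ for every $t$, so the velocity never vanishes. I expect the only real obstacle to be the algebraic simplification in the matching step—precisely the manipulation $1 - s^2/(1+c) = c$ and the cancellation of denominators—which is the ``long exercise'' flavour already announced; everything else is dictated by the two moment conditions on $U$. As an optional conceptual check, one may note that $2\sqrt{p(t)} = (c+sU)\cdot 2\sqrt p$ traces a great circle on the Fisher--Rao sphere $S_>$, since $2\sqrt p$ and $2U\sqrt p$ are orthogonal of norm $2$; this re-proves membership in $\osimplexof{\Omega}$ geometrically.
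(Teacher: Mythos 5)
Your proof is correct and takes essentially the same route as the paper's: the same normalization check from $\expectat p U = 0$ and $\expectat p {U^2} = 1$, the same differentiation of $2\logof{\cosof{\frac t2}+\sinof{\frac t2}U}$ to get $Sp(t)$, and the same two moment computations $\expectat{p(t)}{\sqrt{p/p(t)}}=\cosof{\frac t2}$ and $\expectat{p(t)}{\sqrt{p/p(t)}\,U}=\sinof{\frac t2}$ fed into the h-transport formula, with the final simplification resting on the identity $\sin^2\!\left(\frac t2\right)=\left(1-\cosof{\frac t2}\right)\left(1+\cosof{\frac t2}\right)$, which is exactly the algebra in the paper's third step (organized over a common denominator there, factored slightly differently by you). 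Your two additions --- non-vanishing velocity via the isometry property of $\transport p q$, and the great-circle interpretation on the Fisher--Rao sphere --- are correct embellishments beyond what the paper records, but they do not alter the argument.
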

  \begin{proof}
    First, check that $p(t) \in \osimplexof{\Omega}$ for all $t \in I$:
    \begin{equation*}
      \expectat p {\left(\cosof{\frac t2}+\sinof{\frac t2} U\right)^2} = \cossqof{\frac t2} + 2 \cosof{\frac t2} \sinof{\frac t2} \expectat p {U} + \sinsqof{\frac t2} \expectat p {U^2} = 1 \ . 
    \end{equation*}

Second, compute the differential score:
\begin{equation*}
  Sp(t) = \derivby t \left(2 \logof{\cosof{\frac t2}+\sinof{\frac t2} U} + \log p\right)= 
2 \frac{-\frac12 \sinof{\frac t2} + \frac12 \cosof{\frac t2}U}{\cosof{\frac t2}+\sinof{\frac t2} U} = \frac{- \sinof{\frac t2} + \cosof{\frac t2}U}{\cosof{\frac t2}+\sinof{\frac t2} U} \ .
\end{equation*}

Third, compute $\transport p {p(t)} U$ in steps: $\sqrt{\frac{p}{p(t)}} = \frac1{\cosof{\frac t2}+\sinof{\frac t2} U}$;
\begin{equation*}
  \expectat {p(t)} {\sqrt{\frac{p}{p(t)}}} =  \expectat {p} {\sqrt{\frac{p(t)}{p}}} = \expectat p {\cosof{\frac t2}+\sinof{\frac t2} U} = \cosof{\frac t2} \ ; 
\end{equation*}
\begin{equation*}
  \frac{1+\sqrt{\frac{p}{p(t)}}}{1+\expectat {p(t)} {\sqrt{\frac{p}{p(t)}}}} = \left(1+\frac1{\cosof{\frac t2}+\sinof{\frac t2} U}\right) \frac1{1+\cosof{\frac t2}} = \frac{1+\cosof{\frac t2}+\sinof{\frac t2} U}{\cosof{\frac t2}+\sinof{\frac t2} U} \frac1{1+\cosof{\frac t2}} \ ;
\end{equation*}
\begin{equation*}
  \sqrt{\frac{p}{p(t)}}U = \frac{U}{\cosof{\frac t2}+\sinof{\frac t2} U} \ ;
\end{equation*}
\begin{equation*}
  \expectat {p(t)} {\sqrt{\frac{p}{p(t)}}U} =   \expectat {p} {\sqrt{\frac{p(t)}{p}}U} = \expectat p {\left(\cosof{\frac t2}+\sinof{\frac t2} U\right)U} = \sinof{\frac t2} \ ;
\end{equation*}
\begin{multline*}
  \transport p {p(t)} U = 
  \frac{U}{\cosof{\frac t2}+\sinof{\frac t2} U} - \frac{1+\cosof{\frac t2}+\sinof{\frac t2} U}{\cosof{\frac t2}+\sinof{\frac t2} U} \frac{\sinof{\frac t2}}{1+\cosof{\frac t2}} = \\ \frac{\left(1+\cosof{\frac t2}\right)U - \left(\sinof{\frac t2} + \sinof{\frac t2}\cosof{\frac t2}+\sinsqof{\frac t2} U\right)}{\left(\cosof{\frac t2}+\sinof{\frac t2} U\right)\left(1+\cosof{\frac t2}\right)} = \frac{\left(1+\cosof{\frac t2} - \sinsqof{\frac t2}\right)U - \left(\sinof{\frac t2} + \sinof{\frac t2}\cosof{\frac t2} \right)}{\left(\cosof{\frac t2}+\sinof{\frac t2} U\right)\left(1+\cosof{\frac t2}\right)} = \\ \frac{\cosof{\frac t2}U - \sinof{\frac t2}}{\cosof{\frac t2}+\sinof{\frac t2} U} \ . 
\end{multline*}
This concludes the proof.
\end{proof}
\end{widetext}

\section{Accelerations}
\label{sec:connections}

Second order geometry is usually derived from a notion of covariant derivative (connection), see e.g., \citet{lang:1995}. From the given connection one derives the relevant parallel transport. In our case, it is more natural to start from the transports already defined and derive the connections. This approach has been first applied to IG in \citet{gibilisco|pistone:98}. However, I do not construct directly the connections, but I introduce instead the accelerations associated to each transport.

Let us compute the acceleration of a curve $I \mapsto p(t)$. Let us start with the idea that the ``velocity'' is here the ``$\log$-velocity,''
\begin{equation*}
  t \mapsto (p(t),Sp(t)) = \left(p(t), \derivby t \logof {p(t)} \right) \in S\osimplexof{\Omega}
\end{equation*}

The vector $Sp(t) \in S_{p(t)}\osimplexof{\Omega}$ has to be checked against a curve in the statistical bundle, say $t \mapsto \mtransport p {p(t)} V$ for some $V \in S_p\osimplexof{\Omega}$. One can compute an acceleration as
\begin{multline*}
  \derivby t \scalarat {p(t)} {Sp(t)}{\mtransport p {p(t)} V} = \\ \derivby t \scalarat {p} {\etransport {p(t)} p Sp(t)}{V} = \scalarat {p} {\derivby t \etransport {p(t)} p Sp(t)}{V} = \\ \scalarat {p(t)} {\etransport p {p(t)} \derivby t \etransport {p(t)} p Sp(t)}{\mtransport p {p(t)} V} \ .  
\end{multline*}

\begin{definition}[e-acceleration]
  The \emph{exponential acceleration} $\eacc p(t)$ is
\begin{multline*}
\etransport p {p(t)} \derivby t \etransport {p(t)} p Sp(t) = \\  \etransport p {p(t)} \derivby t \left(\frac{\dot p(t)}{p(t)} - \expectat p {\frac{\dot p(t)}{p(t)}}\right) = \\ \etransport p {p(t)} \left(\frac{\ddot p(t)p(t)-\dot p(t)^2}{p(t)^2} - \expectat p {\frac{\ddot p(t)p(t)-\dot p(t)^2}{p(t)^2}}\right) = \\ \frac{\ddot p(t)p(t)-\dot p(t)^2}{p(t)^2} - \expectat {p(t)} {\frac{\ddot p(t)p(t)-\dot p(t)^2}{p(t)^2}} = \\ \boxed{\frac{\ddot p(t)}{p(t)} - (Sp(t))^2 + \expectat {p(t)} {(Sp(t))^2}} \ .
\end{multline*}
\end{definition}

\begin{proposition}Exponential families have null exponential acceleration.
\end{proposition}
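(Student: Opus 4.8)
The plan is to take a curve in an exponential family in its natural scalar form $p(x;t) = \expof{t f(x) - \psi(t)} p_0(x)$, with $\psi$ the cumulant function fixed by normalisation, and to feed it into the boxed formula for the exponential acceleration,
\begin{equation*}
  \eacc p(t) = \frac{\ddot p(t)}{p(t)} - (Sp(t))^2 + \expectat {p(t)} {(Sp(t))^2} \ .
\end{equation*}
The reduction to this one-dimensional form is harmless: a curve $t \mapsto \theta(t)$ running affinely, $\theta(t) = \theta_0 + t v$, through a multi-parameter exponential family $\expof{\scalarof \theta {T} - \psi(\theta)}$ has score $v \cdot (T - \expectat {p(t)} T)$, which is exactly the score of the scalar family generated by the single sufficient statistic $f = v \cdot T$; so it suffices to treat the scalar case, and it is essential that $t$ enters affinely in the natural parameter.

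First I would record the score. Since $\logof{p(t)} = t f - \psi(t) + \logof{p_0}$, differentiating in $t$ gives $Sp(t) = f - \dot\psi(t)$, and the vanishing of the expected score forces $\dot\psi(t) = \expectat {p(t)} f$, hence $Sp(t) = f - \expectat {p(t)} f$.

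Next I would compute the two $p$-derivatives appearing in the box. From $\dot p(t) = (f - \dot\psi(t))\, p(t)$ one differentiates once more to obtain $\ddot p(t) = \left((f-\dot\psi(t))^2 - \ddot\psi(t)\right) p(t)$, so that $\ddot p(t)/p(t) = (Sp(t))^2 - \ddot\psi(t)$. Substituting into the boxed expression, the two $(Sp(t))^2$ terms cancel and I am left with
\begin{equation*}
  \eacc p(t) = \expectat {p(t)} {(Sp(t))^2} - \ddot\psi(t) \ .
\end{equation*}

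The only remaining point---and the one small obstacle---is to identify $\ddot\psi(t)$ with $\expectat {p(t)} {(Sp(t))^2}$. I would do this by applying the earlier proposition on $\derivby t \expectat {p(t)} f$ to the already established identity $\dot\psi(t) = \expectat {p(t)} f$: differentiating once more,
\begin{equation*}
  \ddot\psi(t) = \derivby t \expectat {p(t)} f = \scalarat {p(t)} {f - \expectat {p(t)} f} {Sp(t)} = \expectat {p(t)} {(Sp(t))^2} \ ,
\end{equation*}
where the last equality uses $Sp(t) = f - \expectat {p(t)} f$. This is precisely the statement that the second derivative of the cumulant function equals the variance $\varat {p(t)} f$. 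With this the two terms cancel and $\eacc p(t) = 0$, as claimed.
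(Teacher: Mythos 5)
Your argument is correct, but it is not the route the paper's proof takes. The paper works straight from the transport definition of the e-acceleration: for $p(t) = \expof{tU - \psi(t)}\cdot p$ the backward-transported score is $\etransport {p(t)} p Sp(t) = U - \expectat p U$, a \emph{constant} curve in the fixed fiber $S_p\osimplexof{\Omega}$, so $\derivby t \etransport {p(t)} p Sp(t) = 0$ and $\eacc p(t) = 0$ in one line --- conceptually, exponential families are exactly the flows of the e-transport sections (cf.\ the proposition on e- and m-flows), hence autoparallel, with no second derivatives of $p$ or $\psi$ ever computed. You instead substitute into the boxed coordinate formula $\frac{\ddot p(t)}{p(t)} - (Sp(t))^2 + \expectat {p(t)} {(Sp(t))^2}$ and reduce everything to the identity $\ddot\psi(t) = \expectat {p(t)} {(Sp(t))^2} = \varat {p(t)} f$. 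The paper actually carries out your computation too, but only as a ``Note'' appended after its proof, and there it invokes $\expectat {p(t)} {(Sp(t))^2} = \ddot\psi(t)$ silently; your derivation of that identity from the earlier proposition on $\derivby t \expectat {p(t)} f$ closes that small gap, which is a genuine gain in self-containedness. Your two framing additions are also sound and absent from the paper: the reduction of a multi-parameter family along an affine path in the natural parameter to the scalar case, and the correct observation that affinity of $t$ in the natural parameter is essential --- a non-affine reparametrization of the same family would not have null e-acceleration. What the paper's route buys in exchange is brevity and a structural explanation (constancy of the transported velocity); what yours buys is an explicit verification that the boxed formula does what the abstract definition promises.
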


\begin{proof}In fact, for $p(t) = \expof{tU - \psi(t)} \cdot p$, one has $\etransport {p(t)} p Sp(t) = U - \expectat p U$, so that $\derivby t \etransport {p(t)} p Sp(t) = 0$.
\end{proof}

Note that for the exponential family above,
\begin{gather*}
  (Sp(t))^2 = (u - \dot \psi(t))^2 \ , \\
  \ddot p(t) = \derivby t [p(t)(u - \dot \psi(t))] = p(t)(u-\dot\psi(t))^2 - p(t)\ddot\psi(t) \ , 
\end{gather*}
so that
\begin{multline*}
  \frac{\ddot p(t)}{p(t)} - (Sp(t))^2 + \expectat {p(t)} {(Sp(t))^2} = \\ (u-\dot\psi(t))^2 - \ddot\psi(t) - (u - \dot \psi(t))^2 + \ddot \psi(t) = 0 \ .
\end{multline*}

A second option is to compute the acceleration as
\begin{multline*}
  \derivby t \scalarat {p(t)} {Sp(t)}{\etransport p {p(t)} V} = \\ \derivby t \scalarat {p} {\mtransport {p(t)} p Sp(t)}{V} = \scalarat {p} {\derivby t \mtransport {p(t)} p Sp(t)}{V} = \\ \scalarat {p(t)} {\mtransport p {p(t)} \derivby t \mtransport {p(t)} p Sp(t)}{\etransport p {p(t)} V} \ . 
\end{multline*}

\begin{definition}[m-acceleration] The \emph{mixture acceleration} $\macc p(t)$ is 
\begin{equation*}
 \mtransport p {p(t)} \derivby t \mtransport {p(t)} p Sp(t) =  \frac {p}{p(t)} \derivby t \left(\frac {p(t)}{p} \frac {\dot p(t)}{p(t)}\right) = \boxed{\frac {\ddot p(t)}{p(t)}} \ . 
\end{equation*}
\end{definition}

\begin{proposition}Mixture models $t \mapsto (1+tU)p$ have null mixture acceleration.
\end{proposition}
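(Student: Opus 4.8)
The plan is to exploit the closed-form expression for the mixture acceleration established immediately above, namely $\macc p(t) = \ddot p(t)/p(t)$. This reduces the claim entirely to a statement about the ordinary second derivative of the curve viewed in $\affineof{\Omega}$, and since $t \mapsto (1+tU)p$ is \emph{affine} in $t$, I expect that second derivative to vanish identically.

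First I would differentiate componentwise. Writing $p(x;t) = (1+tU(x))p(x)$, one reads off $\dot p(x;t) = U(x)p(x)$ and hence $\ddot p(x;t) = 0$ for every $x \in \Omega$ and every $t$. Substituting into the boxed formula gives $\macc p(t) = \ddot p(t)/p(t) = 0$, which is exactly the assertion.

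To mirror the structure of the preceding proof for the exponential case, I could instead verify directly that $\mtransport {p(t)} p Sp(t)$ is constant in $t$. Here $Sp(t) = \derivby t \logof{1+tU} = U/(1+tU)$, while the m-transport back to the fiber at $p$ multiplies by $p(t)/p = 1+tU$, giving $\mtransport {p(t)} p Sp(t) = (1+tU)\,U/(1+tU) = U$, independent of $t$. Its $t$-derivative is therefore zero, and transporting forward via $\mtransport p {p(t)}$ preserves zero.

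The one point needing care is that the curve remains in $\osimplexof{\Omega}$, i.e. $1+tU > 0$ pointwise, which confines $t$ to the interval $I$ identified in \cref{item:m-flow} of \cref{prop:em-flows}; on $I$ all the divisions above are legitimate and the computation is routine, so I foresee no genuine obstacle.
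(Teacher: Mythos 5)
Your proposal is correct and takes essentially the same approach the paper intends: the paper's proof is simply ``Obvious,'' and the obvious point is exactly your first computation, namely that $p(x;t) = (1+tU(x))\,p(x)$ is affine in $t$, so $\ddot p(t) = 0$ and the boxed formula $\macc p(t) = \ddot p(t)/p(t)$ vanishes. Your alternative verification via the constancy of $\mtransport {p(t)} p Sp(t) = U$ and your remark on restricting $t$ to the interval where $1+tU > 0$ are sound but not needed beyond the one-line argument.
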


\begin{proof}
  Obvious.
\end{proof}

\paragraph*{Exercise} One could define a Riemannian accelleration by
$\transport p {p(t)} \derivby t \transport {p(t)} p Sp(t)$. See some related computations in \citet{pistone:2013Entropy,pistone:2013GSI,lods|pistone:2015}.

\subsection{Taylor formula}

Let us apply the definitions of acceleration to the compute the 2nd order Taylor formula. Given a regular function $f \colon \osimplexof{\Omega} \to \R$ and a regular curve $t \mapsto p(t)$, the first derivative of $t \mapsto f(p(t))$ can be written in two ways:
\begin{multline*}
  \derivby t f(p(t)) = \scalarat {p(t)} {\grad f(p(t))}{Sp(t)} \\
\begin{aligned} &= \scalarat {p} {\mtransport {p(t)} p \grad f(p(t))}{\etransport {p(t)} p Sp(t)} \\ &=
\scalarat {p} {\etransport {p(t)} p \grad f(p(t))}{\mtransport {p(t)} p Sp(t)} 
\end{aligned}
\ . 
\end{multline*}

Using the first one,
\begin{multline*}
  \dderivby t f(p(t)) = \scalarat {p} {\derivby t \mtransport {p(t)} p \grad f(p(t))}{\etransport {p(t} p Sp(t)} + \\ 
\scalarat {p} {\mtransport {p(t)} p \grad f(p(t))}{\derivby t \etransport {p(t} p Sp(t)} = \\
\scalarat {p(t)} {\mtransport p {p(t)} \derivby t \mtransport {p(t)} p \grad f(p(t))}{Sp(t)} + \\ \scalarat {p(t)} {\grad f(p(t))}{\eacc p(t)} \ .
\end{multline*}

Assume now that $p(t) = \euler^{tU-\psi(t)} p$, $U \in S_p\osimplexof{\Omega}$, so that $\eacc p(t) = 0$ and the second term above cancels, to give
\begin{equation*}
  \dderivby t f(p(t)) = \scalarat {p(t)} {\mtransport p {p(t)} \derivby t \mtransport {p(t)} p \grad f(p(t))}{Sp(t)} \ .
\end{equation*}

\begin{definition}[m-Hessian] Define the \emph{mixture Hessian}  $\mhessianat U {f(p)}$ to be
\begin{equation*}
 \left. \mtransport p {p(t)} \derivby t \mtransport {p(t)} p \grad f(p(t)) \right|_{t=0} \in S_p\osimplexof{\Omega} 
\end{equation*}
when $p(0) = p$ and $Sp(0) = U$.
\end{definition}

The second derivative above reduces to
\begin{multline*}
    \dderivby t f(p(t)) = \scalarat {p(t)} {\mhessianat {Sp(t)} {f(p(t))}} {Sp(t)} = \\ \scalarat {p(t)} {\mhessianat {U - \dot\psi(t)} {f(p(t))}} {U - \dot\psi(t)}
\end{multline*}
and one can write for $q = \euler^{U-\psi(1)}p$, the Taylor formula
\begin{multline*}
  f(q) - f(p) = \scalarat p {\grad f(p)}{U} + \\ \int_0^1 (1-t) \scalarat {p(t)} {\mhessianat {U - \dot\psi(t)} {f(p(t))}} {U - \dot\psi(t)} \ dt = \\
 \boxed{\scalarat p {\grad f(p)}{U} + \frac12 \scalarat {p} {\mhessianat {U} {f(p)}} {U} + R_2(p,U)} \ .
\end{multline*}

\bigskip
\paragraph*{Exercise.}In a similar way one could derive a Taylor formula for the e-Hessian,
\begin{multline*}
  f(q) - f(p) =  \\
 \boxed{\scalarat p {\grad f(p)}{V} + \frac12 \scalarat {p} {\ehessianat {V} {f(p)}} {V} + R_2(p,U)} \ .
\end{multline*}
Notice that the ``increments'' $U$ and $V$ in the equations above are quite different!
The Riemannian Taylor formula could be derived along similar lines.

\section{Atlases}
\label{sec:exponentialatlas}

I have shown in the previous sections how the calculus on the statistical bundle works. I now turn to the explicit introduction of special atlases of charts on the statistical bundle. Each of atlas will have the following special properties:
\paragraph*{A.} The tangent space computed in the atlas is the corresponding fiber of the statistical bundle;
\paragraph*{B.} The atlas induces induces one of the connections.

Notice that I define an atlas which is not the maximal atlas nor the atlas deduced from the embedding into $\R^\Omega \times \R^\Omega$. It is an \emph{affine atlas}, that is all the transition maps are affine functions.

\begin{definition}[Exponential atlas]
For each $p \in \osimplexof{\Omega}$, define
\begin{multline*}
s_p \colon S\osimplexof{\Omega} \ni (q,w) \mapsto \\ \left(\log\frac q p - \expectat p {\log\frac q p }, \etransport q p w\right) \in S_p\osimplexof{\Omega} \times S_p\osimplexof{\Omega}   
\end{multline*}
\end{definition}

Recall the notation $S_p\osimplexof{\Omega} = B_p$. Notice there is a one chart for each $p \in \osimplexof{\Omega}$ and $s_p(p)=0$. One can say that $s_p$ is the chart centered at $p$..
 
\begin{proposition}[Properties of the e-atlas] \ 
  \begin{enumerate}
  \item If $u = s_p(q)$, then $q = \euler^{u-K_p(u)}\cdot p$ with $K_p(u) = \log\expectat p {\euler^u}$.
  \item The patches are
    \begin{equation*}
      s_p^{-1} \colon (u,v) \mapsto (\euler^{u-K_p(u)}\cdot p, v - dK_p(u)[v])
    \end{equation*}
  \item The transitions are given for $u,v \in B_{p_2}$ by
    \begin{multline*}
      s_{p_1} \circ s_{p_2}^{-1} \colon (u,v) \mapsto \\ \left(\etransport {p_2}{p_1} u + \log\frac{p_2}{p_1} - \expectat {p_1}{\log\frac{p_2}{p_1}}\right) \in B_{p_1} \times B_{p_1}
    \end{multline*}
  \item The tangent bundle identifies with the statistical bundle. If $p(0)=p$, then 
    \begin{equation*}
      \left. \derivby t s_p(p(t)) \right|_{t=0} = \left. \etransport {p(t)} p Sp(t) \right|_{t=0} = Sp(0) \ .
    \end{equation*}
\item The velocity computed in the chart of the lift $t \mapsto (p(t),Sp(t))$ is $t \mapsto (Sp(t),\eacc p(t))$.  
\end{enumerate}
  \end{proposition}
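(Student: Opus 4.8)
The plan is to treat the five items as a cascade resting on one computation, the differential of the cumulant-type map $K_p(u) = \logof{\expectat p {\euler^u}}$. For item 1 I would start from $u = \logof{q/p} - \expectat p {\logof{q/p}}$, note that the subtracted term is a constant $c$, solve $q = \euler^{u+c}\cdot p$, and then impose $\sum_{x} q(x) = 1$; this forces $\euler^{c} = 1/\expectat p {\euler^u}$, i.e.\ $c = -K_p(u)$, which is exactly $q = \euler^{u - K_p(u)}\cdot p$. Centring makes $\expectat p u = 0$, so $u$ indeed lands in $B_p$. For item 2 I would invert the fibre map: since $\etransport q p$ has inverse $\etransport p q$ by the semi-group property of \Cref{prop:affine-transports}, the preimage of $v$ is $w = \etransport p q v = v - \expectat q v$. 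The crux is then to identify $\expectat q v = dK_p(u)[v]$: differentiating $K_p$ gives $dK_p(u)[v] = \expectat p {\euler^{u-K_p(u)} v} = \expectat p {(q/p)\,v} = \expectat q v$, using item 1. This is precisely the stated patch.

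For item 3 I would compose, feeding the base point $q = \euler^{u - K_{p_2}(u)}\cdot p_2$ of $s_{p_2}^{-1}(u,v)$ into $s_{p_1}$. Splitting $\logof{q/p_1} = \logof{q/p_2} + \logof{p_2/p_1}$ and centring at $p_1$ turns the base component into $\etransport{p_2}{p_1} u + \logof{p_2/p_1} - \expectat{p_1}{\logof{p_2/p_1}}$, since $u - \expectat{p_1} u = \etransport{p_2}{p_1} u$; the fibre component (suppressed in the displayed formula) collapses to $\etransport{p_2}{p_1} v$ by the same centring identity together with item 2. Hence the transition is affine, with linear part $\etransport{p_2}{p_1}$ on both components, which is what makes the e-atlas an affine atlas.

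Items 4 and 5 reduce to differentiating the chart representation of a curve. The base component of $s_p(p(t))$ is $\logof{p(t)/p} - \expectat p {\logof{p(t)/p}}$, whose $t$-derivative is $\dot p(t)/p(t) - \expectat p {\dot p(t)/p(t)} = Sp(t) - \expectat p {Sp(t)} = \etransport{p(t)}{p} Sp(t)$ for every $t$; at the centre $p(0) = p$, where $\etransport{p}{p}$ is the identity, this is $Sp(0)$, giving item 4 and identifying the tangent space read in $s_p$ with the fibre $B_p$. Item 5 is then immediate from item 4 and the very \emph{definition} of $\eacc$: reading the lift $(p(t),Sp(t))$ in the chart centred at the moving base point $p(t)$, the base velocity is $Sp(t)$, while the fibre component, i.e.\ the curve $t' \mapsto \etransport{p(t')}{p(t)} Sp(t')$, has velocity $\eacc p(t)$ at $t'=t$.

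The only genuinely delicate point is the differential identity $dK_p(u)[v] = \expectat q v$ behind item 2; this is the exponential-family ``partition-function derivative'' familiar from Boltzmann--Gibbs theory, and once it is in hand items 3--5 are bookkeeping with the semi-group and centring identities of \Cref{prop:affine-transports}, with item 5 essentially definitional. The one place demanding care is item 5, where each instant must be read in the \emph{moving} chart centred at $p(t)$ so that the identity transport appears and the fibre velocity reduces precisely to the boxed expression for $\eacc p(t)$.
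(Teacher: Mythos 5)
Your proof is correct, and since the paper disposes of this proposition with the single line ``It is an exercise,'' yours is in effect the intended argument: the normalization computation for item 1, the cumulant identity $dK_p(u)[v]=\expectat q v$ (the partition-function derivative) for item 2, the splitting $\logof{q/p_1}=\logof{q/p_2}+\logof{p_2/p_1}$ with the centring and semigroup identities for items 3--4, and the definition of $\eacc$ for item 5 are exactly the steps the exercise calls for. You also correctly handle the two points where the paper's statement is silently terse: the displayed transition in item 3 suppresses the fibre component, which as you show collapses to $\etransport {p_2}{p_1} v$, so the transition is affine with linear part $\etransport {p_2}{p_1}$ on both factors; and item 5 must be read in the chart centred at the moving point $p(t)$ (equivalently, transported back by $\etransport p {p(t)}$, under which constants drop from the expectations) for the fibre velocity to reduce to the boxed expression for $\eacc p(t)$.
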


  \begin{proof}
    It is an exercise.
  \end{proof}

The same ideas can be applied to the m-geometry. Notice that specific properties of the finite state space case are used. If such special properties are not available, one must carefully distinguish between $B_p$ and its pre-dual $\prescript {*}{} B_p$, see \citet{pistone:2013GSI}.

\begin{definition}[Mixture atlas]
For each $p \in \osimplexof{\Omega}$, define
\begin{multline*}
\eta_p \colon S\osimplexof{\Omega} \ni (q,w) \mapsto \\ \left(\frac q p - 1, \mtransport q p w\right) \in S_p\osimplexof{\Omega} \times S_p\osimplexof{\Omega}   
\end{multline*}
\end{definition}

\begin{proposition}[Properties of the m-atlas] \ 
  \begin{enumerate}
  \item If $u = \eta_p(q)$, then $q = (1+u)p$.
  \item The patches are
    \begin{equation*}
      \eta_p^{-1} \colon (u,v) \mapsto ((1+u)p,(1+u)w) 
    \end{equation*}
  \item The transitions are
    \begin{equation*}
      \eta_{p_1} \circ \eta_{p_2}^{-1} \colon (u,v) \mapsto \left((1+u)\frac{p_2}{p_1}-1,\mtransport {p_1}{p_2} v \right)
    \end{equation*}
  \item The tangent bundle identifies with the statistical bundle. If $p(0)=p$, then 
    \begin{equation*}
      \left. \derivby t \eta_p(p(t)) \right|_{t=0} = \left. \mtransport {p(t)} p Sp(t) \right|_{t=0} = Sp(0) \ .
    \end{equation*}
\item The velocity computed in the chart of the lift $t \mapsto (p(t),Sp(t))$ is $t \mapsto (Sp(t),\macc p(t))$.  
  \end{enumerate}
\end{proposition}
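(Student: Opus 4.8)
The plan is to verify the five items directly from the defining formula $\eta_p(q,w) = \left(\frac qp - 1, \mtransport q p w\right)$, using only the algebra of the m-transport and its semi-group property; every step is a short computation. Items 1 and 2 are pure inversion. Writing $u$ for the base coordinate $\frac qp - 1$ and solving gives $q = (1+u)p$, which is Item 1. Since $\mtransport q p w = \frac qp w = (1+u)w$, the fibre relation $v = (1+u)w$ recovers $w = (1+u)^{-1} v = \mtransport p q v$, so the patch is $\eta_p^{-1}(u,v) = \left((1+u)p,\ \mtransport p {(1+u)p} v\right)$, giving Item 2.

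For the transition map of Item 3 I would compose the two maps: starting from $(u,v)\in B_{p_2}\times B_{p_2}$, apply $\eta_{p_2}^{-1}$ to obtain the point $q = (1+u)p_2$ with fibre vector $w = \mtransport {p_2}q v$, and then apply $\eta_{p_1}$. The base coordinate becomes $\frac q{p_1} - 1 = (1+u)\frac{p_2}{p_1} - 1$, while the fibre coordinate becomes $\mtransport q {p_1} w = \mtransport q {p_1}\mtransport {p_2}q v = \mtransport {p_2}{p_1} v$, the last equality being exactly the mixture semi-group property of \cref{prop:affine-transports}. This reproduces the stated formula and simultaneously exhibits the transition as affine in $(u,v)$.

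Items 4 and 5 are where the bundle identification sits, and I expect the only conceptual point here. I would express the lift $t\mapsto(p(t),Sp(t))$ in the chart as $\eta_p(p(t),Sp(t)) = (u(t),v(t))$ with $u(t) = \frac{p(t)}p - 1$ and $v(t) = \mtransport {p(t)}p Sp(t) = \frac{\dot p(t)}p$, then differentiate componentwise to obtain the chart velocity $\left(\dot u(t),\dot v(t)\right) = \left(\frac{\dot p(t)}p,\ \frac{\ddot p(t)}p\right)$. For Item 4 the base component already gives the claim, since $\dot u(t) = \mtransport {p(t)} p Sp(t)$ and at $t=0$ one has $p(0)=p$, so the transport $\mtransport {p(0)} p$ is the identity and $\dot u(0)=Sp(0)$. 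The genuinely conceptual step, and the one I expect to be the main obstacle, is Item 5: ``the velocity computed in the chart'' must be read back into the fibre $S_{p(t)}\osimplexof{\Omega}$ through the inverse of the transport defining the chart, namely $\mtransport p {p(t)}$. Applying it gives $\mtransport p {p(t)}\dot u(t) = \frac{\dot p(t)}{p(t)} = Sp(t)$ and $\mtransport p {p(t)}\dot v(t) = \frac{\ddot p(t)}{p(t)} = \macc p(t)$, the latter being exactly the definition of the mixture acceleration, so the chart velocity of the lift is the pair $(Sp(t),\macc p(t))$ as asserted.
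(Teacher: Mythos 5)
Your verification is correct, and it is exactly the direct computation the paper intends by leaving the proof as an exercise: inversion of the chart for items 1--2, the mixture semi-group property $\mtransport q {p_1} \mtransport {p_2} q = \mtransport {p_2}{p_1}$ for the affine transition, and the transport identification $\mtransport p {p(t)}$ to read the chart velocity back into the moving fibre, which reproduces the definition $\macc p(t) = \frac{\ddot p(t)}{p(t)}$ for item 5. Note that your computation also silently corrects two typos in the printed statement: the patch should read $\eta_p^{-1}(u,v) = \left((1+u)p, (1+u)^{-1}v\right)$ rather than $(1+u)w$, and the fibre component of the transition should be $\mtransport {p_2}{p_1} v$ rather than $\mtransport {p_1}{p_2} v$, consistent with the convention $\mtransport p q \colon S_p\osimplexof{\Omega} \to S_q\osimplexof{\Omega}$ and with the e-atlas analogue.
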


\begin{proof} It is an \emph{Exercise}.\end{proof}

\subsection{Using parameters}

Even if one wants to study the geometry of the full simplex, it is possible to introduce parameters because the simplex is finite-dimensional. Parts of the presentation in this section are taken from \citet{pistone|rogantin:2015.1502.06718}.

Computations are frequently performed in a \emph{parametrization}, even in applications such as Compositional Data Analysis, which is descriptive statistics of the full simplex, see \citet{aitchinson:1986}.

A parametrization of the open simplex is a 1-to-1 mapping
\begin{equation*}
  \pi \colon \Theta \ni \bm \theta \mapsto \pi(\bm\theta) \in \osimplexof{\Omega},
\end{equation*}
$\Theta$ being an open set in $\R^n$, $n = \#\Omega - 1$. As the $j$-th coordinate curve is obtained by fixing the other $(n-1)$ components and moving $\theta_j$ only, the differential scores of each $j$-th coordinate curve are defined as the random variables
\begin{equation*}
  S_j \pi(\bm \theta) = \frac{\partial}{\partial \theta_j} \log \pi(\bm \theta), \quad j = 1,\dots,n.
\end{equation*}

The sequence $(S_j \pi(\bm \theta) \colon j = 1,\dots,n)$ is assumed to be a vector basis of the fiber  $S_{\pi(\bm\theta)}\osimplexof{\Omega}$. The expression of the inner product in such a basis is
\begin{multline*}
  \scalarat{\pi(\bm\theta)}{\sum_{i=1}^n \alpha_i S_i \pi(\bm \theta)}{\sum_{j=1}^n \beta_j S_j \pi(\bm \theta)} = \\ \sum_{i,j=1}^n \alpha_i\beta_j \scalarat{\pi(\bm\theta)}{S_i \pi(\bm \theta)}{S_j \pi(\bm \theta)} \ .
\end{multline*}
\begin{definition}[Fisher Information]
  \label{def:fisher-information}
The matrix
\begin{multline*}
  I(\bm\theta) = \left[\scalarat{\pi(\bm\theta)}{S_i \pi(\bm \theta)}{S_j \pi(\bm \theta)}\right]_{i,i=1}^n = \\ \left[\scalarat{\pi(\bm\theta)}{\partial_i \log \pi(\bm \theta)}{\partial_j \log \pi(\bm \theta)}\right]_{i,i=1}^n = \\ \left[\sum_{x \in \Omega} \frac{\partial_i \pi(x;\bm\theta) \partial_i \pi(x;\bm\theta)}{\pi(x;\bm \theta)}\right]_{i,i=1}^n
\end{multline*}
is the \emph{Fisher information matrix} of the parametrization $\pi$. Notice that the Fisher Information matrix depends on the parametrization.
\end{definition}

Consider a curve expressed in the parametrization,
\begin{equation*}
t \mapsto p(t) = \pi(\btheta(t)) \ ,  
\end{equation*}
and compute the differential score in the parametrization as
\begin{equation*}
  Sp(t) = \derivby t \log \pi(\btheta(t)) = \sum_{i=1}^n S_i\pi(\btheta(t))\dot\theta_i(t) \ .
\end{equation*}

Let us now turn to consider the expression of the natural gradient in the parametrization. Let  be given $f \colon \osimplexof{\Omega} \to \R$. The random variable $\grad f(p)$ is defined by
\begin{equation*}
  \derivby t f(p(t)) = \scalarat {p(t)} {\grad f(p(t))}{Sp(t)} \ ,
\end{equation*}
that is, for $p(t) = \pi(\btheta(t))$,
\begin{equation*}
  \derivby t f(\pi(\btheta(t))) = \scalarat {\pi(\btheta(t))} {\grad f(\pi(\btheta((t))))}{S\pi(\btheta(t))} \ .
\end{equation*}
If one writes $\tilde f(\btheta) = f\circ\pi(\btheta)$ and expresses the differential score in the basis,
\begin{multline*}
  \sum_{i=1}^n \partial_i \tilde f(\btheta(t)) \dot\theta_i(t) = \\ \sum_{i=1}^n \scalarat {\pi(\btheta(t))} {\grad f(\pi(\btheta((t))))}{S_j\pi(\btheta(t))} \dot\theta_i(t) \ .
\end{multline*}

As $\btheta$ and $\dot\btheta$ in the equation above are generic, it follows the system of equations 
\begin{equation*}
  \partial_i \tilde f(\btheta) = \scalarat {\pi(\btheta)} {\grad f(\pi(\btheta))}{S_j\pi(\btheta)} \ , \quad i=1,\dots,n \ .
\end{equation*}

This gives the form of the natural gradient that was originally proposed by \citet{amari:1998natural}.

\begin{proposition} The expression of $\grad f (\pi(\btheta))$ has components in the basis $(S_j\pi(\btheta) \colon j=1,\dots n)$ given by
\begin{equation*}
  I(\btheta)^{-1} \nabla \tilde f(\btheta) \ .
\end{equation*}
\end{proposition}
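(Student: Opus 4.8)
The plan is to solve, for the unknown coefficients of the natural gradient, the linear system derived immediately above the statement, exploiting the fact that the Fisher information matrix is precisely the Gram matrix of the score basis. First, since the family $(S_j\pi(\btheta))_{j=1}^n$ is assumed to be a vector basis of the fiber $S_{\pi(\btheta)}\osimplexof{\Omega}$, I would expand the natural gradient in this basis, writing $\grad f(\pi(\btheta)) = \sum_{k=1}^n g_k(\btheta)\, S_k\pi(\btheta)$ for a unique coefficient vector $g(\btheta) = (g_1(\btheta),\dots,g_n(\btheta))^t$. The claim to be established is exactly that $g(\btheta) = I(\btheta)^{-1}\nabla\tilde f(\btheta)$.

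Next I would substitute this expansion into the defining system
\begin{equation*}
  \partial_i \tilde f(\btheta) = \scalarat {\pi(\btheta)} {\grad f(\pi(\btheta))}{S_i\pi(\btheta)}, \quad i = 1,\dots,n,
\end{equation*}
obtained just above. By bilinearity of $\scalarat{\pi(\btheta)}{\cdot}{\cdot}$ the right-hand side becomes $\sum_{k=1}^n g_k(\btheta)\,\scalarat{\pi(\btheta)}{S_k\pi(\btheta)}{S_i\pi(\btheta)}$, and by \Cref{def:fisher-information} each inner product $\scalarat{\pi(\btheta)}{S_k\pi(\btheta)}{S_i\pi(\btheta)}$ is the matrix entry $I(\btheta)_{ik}$. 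Hence the system reads $\partial_i\tilde f(\btheta) = \sum_{k=1}^n I(\btheta)_{ik}\, g_k(\btheta)$, i.e. in matrix form $\nabla\tilde f(\btheta) = I(\btheta)\, g(\btheta)$, using the symmetry of $I(\btheta)$.

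Finally I would invert this relation. The only point requiring justification is that $I(\btheta)$ is nonsingular: since $I(\btheta)$ is the Gram matrix of the linearly independent family $(S_j\pi(\btheta))_{j=1}^n$ with respect to the inner product $\scalarat{\pi(\btheta)}{\cdot}{\cdot}$, which is positive definite on the fiber (because $\expectat{\pi(\btheta)}{U^2} = \sum_x U(x)^2\pi(x;\btheta) > 0$ for $U \ne 0$ on the open simplex), it is symmetric positive definite, hence invertible. Solving $\nabla\tilde f(\btheta) = I(\btheta)g(\btheta)$ then yields $g(\btheta) = I(\btheta)^{-1}\nabla\tilde f(\btheta)$, which are exactly the sought components of $\grad f(\pi(\btheta))$ in the basis $(S_j\pi(\btheta))$. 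I do not expect a genuine obstacle here: once the defining system is rewritten through the Gram-matrix interpretation of $I(\btheta)$, the statement reduces to a one-line linear-algebra inversion, and the nondegeneracy of $I(\btheta)$ is already guaranteed by the standing assumption that the scores form a basis.
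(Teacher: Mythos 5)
Your proposal is correct and follows essentially the same route as the paper, which derives the linear system $\partial_i \tilde f(\btheta) = \scalarat{\pi(\btheta)}{\grad f(\pi(\btheta))}{S_i\pi(\btheta)}$ just before the statement and leaves the final inversion implicit. You supply exactly the intended completion---expanding $\grad f$ in the score basis, recognizing $I(\btheta)$ as the Gram matrix of that basis, and justifying its invertibility via positive definiteness of the inner product on the fiber---which is a welcome explicit filling of the step the paper omits.
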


\subsection{Special parametrizations}

The \emph{common parametrization} of the (flat) simplex $\osimplexof{\Omega}$ is the projection on the \emph{solid simplex}
\begin{gather*}
\Gamma_n = \setof{\bm\eta \in \R^n}{0<\eta_j,  \sum_{j=1}^n \eta_j< 1} \ , \\
  \pi \colon \Gamma_n \ni \bm\eta \mapsto \left(1 - \sum_{j=1}^n \eta_j,\eta_1,\dots,\eta_n\right) \in \osimplexof{\Omega} \ ,
\end{gather*}
in which case $\partial_j \pi(\bm\eta)$, $j=1,\dots,n$, is the random variable with values $-1$ at $x=0$, 1 at $x=j$, 0 otherwise, hence $\partial_j \pi(\bm \eta) = \left((X=j)-(X=0)\right)$ and
\begin{equation*}
  S_j \pi(\bm \eta) = \left((X=j)-(X=0)\right)/\pi(\bm\eta).
\end{equation*}

The element $I_{jh}(\bm\eta)$ of the Fisher information matrix is
\begin{multline*}
\expectat {\pi(\bm\eta)}{\frac{(X=j)-(X=0)}{\pi(X;\bm\eta)}\frac{(X=h)-(X=0)}{\pi(X;\bm\eta)}} = \\  \sum_x \pi(x,\bm\eta)^{-1}\left((x=j)(j=h)+(x = 0)\right) = \\ \eta_j^{-1}(j=h) + \left(1 - \sum_k \eta_k\right)^{-1} \ ,
\end{multline*}
hence,
\begin{equation*}
  I(\bm\eta) = \diagof{\bm\eta}^{-1}  + \left(1 - \sum_{j=1}^n \eta_j\right)^{-1}[1]_{i,j=1}^n\ .
\end{equation*}

\paragraph*{Example}
Consider $n=3$.  The Fisher information matrix, its inverse and the determinant of the inverse are, respectively,
\begin{multline*}
 I(\eta_1,\eta_2,\eta_3)  =  (1 - \eta_1 - \eta_2-\eta_3)^{-1} \times \\
  \begin{bmatrix}
\eta_1^{-1}(1 - \eta_2-\eta_3) & 1 &1 \\ 1 & \eta_2^{-1}(1 - \eta_1-\eta_3) & 1 \\  1 &1& \eta_3^{-1}(1 - \eta_1-\eta_2)
  \end{bmatrix},
\end{multline*}
\begin{equation*}
I(\eta_1,\eta_2,\eta_3)^{-1}  =
  \begin{bmatrix}
(1 - \eta_{1}) \eta_{1} & -\eta_{1} \eta_{2} & -\eta_{1} \eta_{3}
\\
-\eta_{1} \eta_{2} & (1 - \eta_{2}) \eta_{2} & -\eta_{2} \eta_{3}
\\
-\eta_{1} \eta_{3} &  -\eta_{2} \eta_{3} & (1 - \eta_{3}) \eta_{3}
  \end{bmatrix}\ , 
\end{equation*}
\begin{equation*}
\det\left(I(\eta_1,\eta_2,\eta_3)^{-1} \right) = (1-\eta_1- \eta_2- \eta_3)\eta_1 \eta_2 \eta_3 \ .
\end{equation*}
Note that the computation of the inverse of $I(\bm\eta)$ is an application of the Sherman-Morrison formula and the computation of the determinant of $I(\bm\eta)^{-1}$ is an application of the matrix determinant lemma.

For general $n$,
\begin{proposition}\label{prop:I-1} \
\begin{enumerate}
\item The inverse of the Fisher information matrix is
\begin{equation*}
I(\bm\eta)^{-1}=\diagof{\bm\eta}  - \bm\eta \bm\eta^t \ .
\end{equation*}
\item In particular,  $I(\bm\eta)^{-1}$ is zero  on the vertexes of the simplex, only.
\item The determinant of the inverse Fisher information matrix is
\begin{equation*}
\det\left(I(\bm\eta)^{-1}\right)=\left(1- \sum_{i=1}^n  \eta_i\right)  \prod_{i=1}^n \eta_i \ .
\end{equation*}
\item The determinant of $I(\bm\eta)^{-1}$  is zero on the borders of the simplex, only.
\item On the interior of each  facet, the rank of  $I(\bm\eta)^{-1}$ is $n-1$ and the $n-1$ liner independent column vectors  generate the subspace parallel to the facet itself.
\end{enumerate}
\end{proposition}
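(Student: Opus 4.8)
The plan is to treat the matrix $I(\bm\eta)^{-1} = \diagof{\bm\eta} - \bm\eta\bm\eta^t$ throughout as a rank-one perturbation of a diagonal matrix, writing $\eta_0 = 1 - \sum_{j=1}^n \eta_j$ so that $I(\bm\eta) = \diagof{\bm\eta}^{-1} + \eta_0^{-1}\one\one^t$ and the quantity in Part~3 reads $\eta_0 \prod_{j=1}^n \eta_j = \prod_{j=0}^n \eta_j$, the product of all $n+1$ barycentric coordinates. Three identities will be used repeatedly: $\diagof{\bm\eta}^{-1}\bm\eta = \one$, $\one^t\diagof{\bm\eta} = \bm\eta^t$, and $\one^t\bm\eta = 1 - \eta_0$.

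For Part~1, rather than invoke the Sherman--Morrison formula I would verify the claimed inverse directly, expanding $I(\bm\eta)\left(\diagof{\bm\eta} - \bm\eta\bm\eta^t\right)$; the coefficient of the rank-one term $\one\bm\eta^t$ collapses to $-1 + \eta_0^{-1} - \eta_0^{-1}(1-\eta_0) = 0$, leaving the identity. Part~2 is then immediate: the diagonal entries of $I(\bm\eta)^{-1}$ are $\eta_i(1-\eta_i)$ and the off-diagonal entries are $-\eta_i\eta_j$, so the matrix vanishes exactly when every $\eta_i \in \{0,1\}$ and at most one coordinate is nonzero, i.e. exactly at the origin and the $n$ standard basis vectors, the $n+1$ vertices. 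Part~3 follows from the matrix determinant lemma applied with $A = \diagof{\bm\eta}$ and the rank-one term $-\bm\eta\bm\eta^t$, giving $\det(\diagof{\bm\eta} - \bm\eta\bm\eta^t) = \prod_i\eta_i \cdot (1 - \bm\eta^t\diagof{\bm\eta}^{-1}\bm\eta) = \prod_i\eta_i\cdot(1 - \sum_i\eta_i)$. Part~4 is read off the factorization $\prod_{j=0}^n\eta_j$: it is positive on the open simplex and vanishes precisely when some barycentric coordinate is zero, which is the border.

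The substantive step is Part~5, which I would split according to which facet is considered, a facet being the locus where exactly one of $\eta_0,\eta_1,\dots,\eta_n$ vanishes while the rest are strictly positive. If $\eta_k = 0$ with $k \ge 1$, then the $k$-th row and column of both $\diagof{\bm\eta}$ and $\bm\eta\bm\eta^t$ vanish, so row and column $k$ of $I(\bm\eta)^{-1}$ are zero, while the complementary $(n-1)\times(n-1)$ block is $\diagof{\bm\eta'} - \bm\eta'\bm\eta'^t$ in the remaining coordinates; by Part~3 its determinant is $\eta_0\prod_{j\ne k}\eta_j > 0$, so the rank is exactly $n-1$ and the columns, all having zero $k$-th entry, span the parallel subspace $\setof{v \in \R^n}{v_k = 0}$ of that facet. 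If instead $\eta_0 = 0$, i.e. $\sum_{j=1}^n\eta_j = 1$ with all $\eta_j > 0$, I would compute the kernel directly: $I(\bm\eta)^{-1}v = 0$ reads $\eta_i(v_i - \bm\eta^t v) = 0$ for every $i$, and since all $\eta_i > 0$ this forces $v_i = \bm\eta^t v$ constant, i.e. $v \in \R\one$; hence the kernel is one-dimensional and the rank is $n-1$. Because $I(\bm\eta)^{-1}$ is symmetric, its column space is the orthogonal complement of its kernel, namely $\setof{v}{\sum_i v_i = 0}$, which is exactly the subspace parallel to the facet $\sum_j\eta_j = 1$.

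The main obstacle is the $\eta_0 = 0$ facet in Part~5: unlike the coordinate facets it is not obtained by deleting a row and column, so the block argument does not apply and one must instead identify the kernel explicitly and then invoke symmetry to pass from kernel to column space. Everything else is bookkeeping with the identities $\diagof{\bm\eta}^{-1}\bm\eta = \one$ and $\one^t\bm\eta = 1-\eta_0$, together with the two standard rank-one lemmas the paper has already flagged.
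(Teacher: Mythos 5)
Your proposal is correct, and on Parts 1--4 and the coordinate-facet half of Part 5 it matches the paper's proof essentially step for step: direct verification of $I(\bm\eta)I(\bm\eta)^{-1} = \mathrm{id}$, inspection of the entries $\eta_i(1-\eta_i)$ and $-\eta_i\eta_j$ for the vertices, the matrix determinant lemma for Part 3, reading Part 4 off the factorization, and the deleted-row-and-column block argument with the dimension-$(n-1)$ determinant for the facets $\eta_k = 0$. Where you genuinely diverge is the facet $1-\sum_i \eta_i = 0$. The paper stays with minors: it deletes the $j$-th row and column, computes the principal minor $\bigl(1-\sum_{i\ne j}\eta_i\bigr)\prod_{i\ne j}\eta_i$, observes it equals $\prod_i \eta_i \ne 0$ on that facet (so the rank is $n-1$), and then asserts that the columns are orthogonal to the constant vector---a fact whose justification, namely $\one^t I(\bm\eta)^{-1} = \bigl(1-\sum_i\eta_i\bigr)\bm\eta^t = 0$ on the facet, the paper leaves unstated. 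You instead solve $I(\bm\eta)^{-1}v = 0$ entrywise as $\eta_i(v_i - \bm\eta^t v) = 0$, conclude $\ker I(\bm\eta)^{-1} = \R\one$, and pass from kernel to column space by symmetry. Your route buys a cleaner identification of the column space (the orthogonal complement of the kernel is exactly $\setof{v}{\sum_i v_i = 0}$, with no implicit column-sum computation), while the paper's route has the virtue of handling both facet types by a single mechanism, nonvanishing $(n-1)\times(n-1)$ minors. One pedantic point: your entrywise computation only shows $\ker \subseteq \R\one$; that the kernel is nontrivial requires either the one-line check $I(\bm\eta)^{-1}\one = \bigl(1-\sum_j\eta_j\bigr)\bm\eta = 0$ on the facet, or an appeal to your Part 4. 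That is a one-line patch, not a gap.
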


\begin{proof} \
\paragraph*{1.} By direct computation, $I(\bm\eta) I(\bm\eta)^{-1}$ is the identity matrix.
\paragraph*{2.} The diagonal elements of  $I(\bm\eta)^{-1}$ are zero if $\eta_j=1$ or $\eta_j=0$, for $j=1,\dots,n$. If, for a given $j$,  $\eta_j=1$, then the elements of $I(\bm\eta)^{-1}$ are zero if $\eta_h=0$, $h \ne j$. The remaining case corresponds to  $\eta_j=0$  for all $j$. Then $I(\bm\eta)^{-1}=0$  on all the vertexes of the simplex.
\paragraph*{3.} It follows from Matrix Determinant Lemma.
\paragraph*{4.} The determinant factors in terms corresponding to the equations of the facets.
\paragraph*{5.} Given $i$, the conditions  $\eta_i=0$ and $\eta_j\ne 0,1$ for all $j\ne i$, define the interior of the facet orthogonal to standard base vector $e_i$. In this case the $i$-th row and the $i$-th column of $ I(\bm\eta)^{-1}$  are zero and  the complement matrix corresponds to the inverse of a Fisher information matrix in dimension $n-1$ with non zero determinant. It follows that the subspace generated by the columns has dimension $n-1$ and coincides with the space orthogonal to $\eta_i$.
Consider  the facet defined by $\left(1- \sum_{i=1}^n \eta_i\right)=0$, $\eta_i\ne 0,1$ for all $i$.  For a given $j$,  the matrix without the $j$-th row and the $j$-th column has determinant
$\left(1-\sum_{i=1,i \ne j }^n \eta_i \right)\prod_{i=1, i \ne j}^n \eta_i$. On the considered facet this determinant is different to zero and $I(\bm\eta)^{-1}$  has rank $n-1$ and their columns are orthogonal to the constant vector.
\end{proof}

\paragraph*{Exercise.} Another parametrization is the \emph{exponential parametrization} based on the exponential family with sufficient statistics $X_j = (X=j)$, $j=1,\dots,n$,
\begin{gather*}
  \pi \colon \mathbb R^n \ni \bm\theta \mapsto \expof{\sum_{j=1}^n \theta_j X_j - \psi(\bm\theta)}\frac1{n+1} \ , \quad \text{with}
\\
\psi(\bm\theta) = \logof{1 + \sum_j \euler^{\theta_j}} - \logof{n+1} \ .
\end{gather*}

Instead of using a parametrization on an open set of $\R^n$, $n = \#\Omega -1$, one could use as parameter set a manifold with the correct dimension. This approach has been made systematic in the presentation of IG by \citet{Ay|Jost|Le|Schwachhofer:2017IGbook}. The original example has been already discussed in \cref{sec:FR}.

\section{Generalised statistical bundle}
\label{sec:gener-stat-bundle}

This section is devoted to a brief discuss of a generalisation of IG based on the idea of a ``deformed'' exponential functions. That is, every positive density is represented as a random variable transformed by a function with shape similar to that of the exponential function. Such models where first introduced as a replacement of Gibbs statistics by \citet{tsallis:1988}. The presentation below follows \citet{naudts:2011GTh} and \citet*{montrucchio|pistone:2017}.

The basic relation leading to the definition of differential score can be generalised as follows. Let be given a positive real function $A$ with domain $]0,+\infty[$ and define the $A$-logarithm to be the strictly increasing function
\begin{equation*}
  \log_A(x) = \int_1^x \frac{du}{A(u)} \ .
\end{equation*}
Note that $\log_A(1) = 0$, that $\log_A$ is concave if $A$ is increasing, and that $A(x) = x$ gives the usual logarithm. The $A$-exponential is $\exp_A = \log_A^{-1}$. It holds $\exp_S'(y) = A(\exp_A(y))$.

A notable example is the Tsallis logarithm, or $q$-logarithm, which is obtained when $A(x) = x^q$ for some given real $q$. The deformed cases, i.e., $q \neq 1$, can be computed explicitly as
\begin{gather*}
  \ln_q(x) = \int_1^x \frac{du}{u^q} = \frac{x^{1-q}-1}{1-q} \\
  \exp_q(y) = \left(q+(1-q)y\right)^{\frac 1{1-q}}
  \end{gather*}

It is possible to define a $q$-differential score,
\begin{equation*}
  S^{(q)}p(t) = \derivby t \ln_q(p(t)) = \frac {\dot p(t)}{p(t)^q} \ ,
\end{equation*}
together with a $q$-statistical bundle
\begin{equation*}
  S^{(q)} \osimplexof{\Omega} = \setof{(p,U)}{p \in \osimplexof{\Omega}, \sum_{x\in\Omega} U(x) p(x)^q = 0}
\end{equation*}

It is possible to repeat in this setting the construction that led to a definition of entropy. One needs a section $U$ of the $q$-statistical bundle such that $(p,U(p)) \in S^{(q)} \in \osimplexof{\Omega}$ and $p = \exp_q((U(q) - H_q(p))$. The conditions are satisfied with
\begin{equation*}
 p = \exp_q(\ln_q p) = \exp_q(\ln_q p - \mathbb M_{p^q}(\ln_q p) + \mathbb M_{p^q}(\ln_q p)) \ ,
\end{equation*}
where $\mathbb M_{p^q}$ is the sum with weight $p^q$. The generalised entropy is
\begin{multline*}
  H_q(p) = - \mathbb M_{p^q} (\ln_q p) = - \sum_{x \in \Omega} p(x)^q \ln_q p(x) = \\ - \sum_{x \in \Omega} p(x)^q \frac{p(x)^{1-q}-1}{1-q} = \frac{-1 + \sum_{x \in \Omega} p(x)^q}{1-q} \ , 
\end{multline*}
that is, the Tsallis entropy.

Other interesting examples are the deformed logarithms defined in \citet{kaniadakis:2001PhA,kaniadakis:2001PhLA}, a special case being
\begin{equation*}
\ln_{1} = \frac{x-x^{-1}}{2} = \int_1^x \frac{2 u^2}{1+u^2} \ du \ ,   
\end{equation*}
and the deformed logarithm defined in \citet{newton:2012},
\begin{equation*}
  \ln_{\text N}(x) = \log x + x - 1 = \int_1^x \frac{u}{1+u} \ du \ .
\end{equation*}

\section{Exercises}

\paragraph*{1.}
  Study the curve 
  \begin{equation*}
    t \mapsto \left(\frac12 +\frac12(1-tU)^2-\frac t2 \expectat p {U^2}\right) \cdot p , \quad U \in S_p\osimplexof{\Omega} \ .
  \end{equation*}
See \citet{eguchi:2005igaia}.

\paragraph*{2..}
 Study the curve 
    \begin{equation*}
      p(t) = \left(tU + \sqrt{1 - t^2 \expectat p {U^2}}\right)^2 \cdot p \ .
    \end{equation*}
    See \citet{burdet|combe|nencka:2001}.

\paragraph*{3.}
 Check whether
 \begin{equation*}
   t \mapsto p(t) = (1+t^2\expectat p {U^2})^{-1} (1+tU)^2 \cdot p
 \end{equation*}
 is the flow of the h-transport.

\paragraph*{4.}
Chose a $U \in S_p\osimplexof{\Omega}$or each with unit $p$-norm, $\expectat p {U^2} = 1$, and consider the model  
\begin{equation*}
   t \mapsto p(t) = \frac{(1+(\sinh t)U)^2}{\cosh^2 t} \cdot p \ ,
 \end{equation*}
where $1+(\sinh t)U > 0$ if $t \in I$, $I$ neighborhood of 0, so that $p(t) > 0$ and $\expectat p {(1+(\sinh t)U)^2} = 1 + \sinh^2 t = \cosh^2 t$. $I \ni t \mapsto p(t)$ is a regular curve with differential score
\begin{equation*}
  Sp(t) = 2 \left(\frac{(\cosh t)U}{1+(\sinh t)U} - \frac{\sinh t}{\cosh t}\right) \ .
\end{equation*}

Compute $\transport p {p(t)} U$.

\paragraph*{5} It is possible to define the Riemannian atlas through the embedding of $\osimplexof{\Omega}$ onto the the tangent space of the unit sphere, see a presentation in the style of the previous ones in \citet{pistone:2013GSI}.


\begin{thebibliography}{44}%
\makeatletter
\providecommand \@ifxundefined [1]{%
 \@ifx{#1\undefined}
}%
\providecommand \@ifnum [1]{%
 \ifnum #1\expandafter \@firstoftwo
 \else \expandafter \@secondoftwo
 \fi
}%
\providecommand \@ifx [1]{%
 \ifx #1\expandafter \@firstoftwo
 \else \expandafter \@secondoftwo
 \fi
}%
\providecommand \natexlab [1]{#1}%
\providecommand \enquote  [1]{``#1''}%
\providecommand \bibnamefont  [1]{#1}%
\providecommand \bibfnamefont [1]{#1}%
\providecommand \citenamefont [1]{#1}%
\providecommand \href@noop [0]{\@secondoftwo}%
\providecommand \href [0]{\begingroup \@sanitize@url \@href}%
\providecommand \@href[1]{\@@startlink{#1}\@@href}%
\providecommand \@@href[1]{\endgroup#1\@@endlink}%
\providecommand \@sanitize@url [0]{\catcode `\\12\catcode `\$12\catcode
  `\&12\catcode `\#12\catcode `\^12\catcode `\_12\catcode `\%12\relax}%
\providecommand \@@startlink[1]{}%
\providecommand \@@endlink[0]{}%
\providecommand \url  [0]{\begingroup\@sanitize@url \@url }%
\providecommand \@url [1]{\endgroup\@href {#1}{\urlprefix }}%
\providecommand \urlprefix  [0]{URL }%
\providecommand \Eprint [0]{\href }%
\providecommand \doibase [0]{https://doi.org/}%
\providecommand \selectlanguage [0]{\@gobble}%
\providecommand \bibinfo  [0]{\@secondoftwo}%
\providecommand \bibfield  [0]{\@secondoftwo}%
\providecommand \translation [1]{[#1]}%
\providecommand \BibitemOpen [0]{}%
\providecommand \bibitemStop [0]{}%
\providecommand \bibitemNoStop [0]{.\EOS\space}%
\providecommand \EOS [0]{\spacefactor3000\relax}%
\providecommand \BibitemShut  [1]{\csname bibitem#1\endcsname}%
\let\auto@bib@innerbib\@empty
\bibitem [{\citenamefont {Amari}\ and\ \citenamefont
  {Nagaoka}(2000)}]{amari|nagaoka:2000}%
  \BibitemOpen
  \bibfield  {author} {\bibinfo {author} {\bibfnamefont {S.}~\bibnamefont
  {Amari}}\ and\ \bibinfo {author} {\bibfnamefont {H.}~\bibnamefont
  {Nagaoka}},\ }\href@noop {} {\emph {\bibinfo {title} {Methods of information
  geometry}}}\ (\bibinfo  {publisher} {American Mathematical Society},\
  \bibinfo {year} {2000})\ pp.\ \bibinfo {pages} {x+206},\ \bibinfo {note}
  {translated from the 1993 Japanese original by Daishi Harada}\BibitemShut
  {NoStop}%
\bibitem [{\citenamefont {Amari}(2016)}]{amari:2016}%
  \BibitemOpen
  \bibfield  {author} {\bibinfo {author} {\bibfnamefont {S.-i.}\ \bibnamefont
  {Amari}},\ }\href {https://doi.org/10.1007/978-4-431-55978-8} {\emph
  {\bibinfo {title} {Information geometry and its applications}}},\ \bibinfo
  {series} {Applied Mathematical Sciences}, Vol.\ \bibinfo {volume} {194}\
  (\bibinfo  {publisher} {Springer, [Tokyo]},\ \bibinfo {year} {2016})\ pp.\
  \bibinfo {pages} {xiii+374}\BibitemShut {NoStop}%
\bibitem [{\citenamefont {Ay}\ \emph {et~al.}(2017{\natexlab{a}})\citenamefont
  {Ay}, \citenamefont {Jost}, \citenamefont {L\^{e}},\ and\ \citenamefont
  {Schwachh\"{o}fer}}]{Ay|Jost|Le|Schwachhofer:2017IGbook}%
  \BibitemOpen
  \bibfield  {author} {\bibinfo {author} {\bibfnamefont {N.}~\bibnamefont
  {Ay}}, \bibinfo {author} {\bibfnamefont {J.}~\bibnamefont {Jost}}, \bibinfo
  {author} {\bibfnamefont {H.~V.}\ \bibnamefont {L\^{e}}},\ and\ \bibinfo
  {author} {\bibfnamefont {L.}~\bibnamefont {Schwachh\"{o}fer}},\ }\href
  {https://doi.org/10.1007/978-3-319-56478-4} {\emph {\bibinfo {title}
  {Information geometry}}},\ \bibinfo {series} {Ergebnisse der Mathematik und
  ihrer Grenzgebiete. 3. Folge. A Series of Modern Surveys in Mathematics
  [Results in Mathematics and Related Areas. 3rd Series. A Series of Modern
  Surveys in Mathematics]}, Vol.~\bibinfo {volume} {64}\ (\bibinfo  {publisher}
  {Springer, Cham},\ \bibinfo {year} {2017})\ pp.\ \bibinfo {pages}
  {xi+407}\BibitemShut {NoStop}%
\bibitem [{\citenamefont {Landau}\ and\ \citenamefont
  {Lifshits}(1980)}]{landau|lifshits:1980}%
  \BibitemOpen
  \bibfield  {author} {\bibinfo {author} {\bibfnamefont {L.~D.}\ \bibnamefont
  {Landau}}\ and\ \bibinfo {author} {\bibfnamefont {E.~M.}\ \bibnamefont
  {Lifshits}},\ }\href@noop {} {\emph {\bibinfo {title} {Course of Theoretical
  Physics. Statistical Physics.}}},\ \bibinfo {edition} {3rd}\ ed.,\
  Vol.~\bibinfo {volume} {V}\ (\bibinfo  {publisher} {Butterworth-Heinemann},\
  \bibinfo {year} {1980})\BibitemShut {NoStop}%
\bibitem [{\citenamefont {Pistone}\ and\ \citenamefont
  {Sempi}(1995)}]{pistone|sempi:95}%
  \BibitemOpen
  \bibfield  {author} {\bibinfo {author} {\bibfnamefont {G.}~\bibnamefont
  {Pistone}}\ and\ \bibinfo {author} {\bibfnamefont {C.}~\bibnamefont
  {Sempi}},\ }\bibfield  {title} {\bibinfo {title} {An infinite-dimensional
  geometric structure on the space of all the probability measures equivalent
  to a given one},\ }\href@noop {} {\bibfield  {journal} {\bibinfo  {journal}
  {Ann. Statist.}\ }\textbf {\bibinfo {volume} {23}},\ \bibinfo {pages} {1543}
  (\bibinfo {year} {1995})}\BibitemShut {NoStop}%
\bibitem [{\citenamefont {Lang}(1995)}]{lang:1995}%
  \BibitemOpen
  \bibfield  {author} {\bibinfo {author} {\bibfnamefont {S.}~\bibnamefont
  {Lang}},\ }\href@noop {} {\emph {\bibinfo {title} {Differential and
  {R}iemannian manifolds}}},\ \bibinfo {edition} {3rd}\ ed.,\ \bibinfo {series}
  {Graduate Texts in Mathematics}, Vol.\ \bibinfo {volume} {160}\ (\bibinfo
  {publisher} {Springer-Verlag},\ \bibinfo {year} {1995})\ pp.\ \bibinfo
  {pages} {xiv+364}\BibitemShut {NoStop}%
\bibitem [{\citenamefont {Klingenberg}(1995)}]{klingenberg:1995}%
  \BibitemOpen
  \bibfield  {author} {\bibinfo {author} {\bibfnamefont {W.~P.~A.}\
  \bibnamefont {Klingenberg}},\ }\href {https://doi.org/10.1515/9783110905120}
  {\emph {\bibinfo {title} {Riemannian geometry}}},\ \bibinfo {edition} {2nd}\
  ed.,\ \bibinfo {series} {De Gruyter Studies in Mathematics}, Vol.~\bibinfo
  {volume} {1}\ (\bibinfo  {publisher} {Walter de Gruyter \& Co., Berlin},\
  \bibinfo {year} {1995})\ pp.\ \bibinfo {pages} {x+409}\BibitemShut {NoStop}%
\bibitem [{\citenamefont {Rockafellar}(1970)}]{rockafellar:1970}%
  \BibitemOpen
  \bibfield  {author} {\bibinfo {author} {\bibfnamefont {R.~T.}\ \bibnamefont
  {Rockafellar}},\ }\href@noop {} {\emph {\bibinfo {title} {Convex
  analysis}}},\ Princeton Mathematical Series, No. 28\ (\bibinfo  {publisher}
  {Princeton University Press},\ \bibinfo {year} {1970})\ pp.\ \bibinfo {pages}
  {xviii+451}\BibitemShut {NoStop}%
\bibitem [{\citenamefont {Barvinok}(2002)}]{barvinok:2002}%
  \BibitemOpen
  \bibfield  {author} {\bibinfo {author} {\bibfnamefont {A.}~\bibnamefont
  {Barvinok}},\ }\href@noop {} {\emph {\bibinfo {title} {A course in
  convexity}}},\ \bibinfo {series} {Graduate Studies in Mathematics},
  Vol.~\bibinfo {volume} {54}\ (\bibinfo  {publisher} {American Mathematical
  Society, Providence, RI},\ \bibinfo {year} {2002})\ pp.\ \bibinfo {pages}
  {x+366}\BibitemShut {NoStop}%
\bibitem [{\citenamefont {Pistone}(2013{\natexlab{a}})}]{pistone:2013GSI}%
  \BibitemOpen
  \bibfield  {author} {\bibinfo {author} {\bibfnamefont {G.}~\bibnamefont
  {Pistone}},\ }\bibfield  {title} {\bibinfo {title} {Nonparametric information
  geometry},\ }in\ \href@noop {} {\emph {\bibinfo {booktitle} {Geometric
  science of information}}},\ \bibinfo {series} {Lecture Notes in Comput.
  Sci.}, Vol.\ \bibinfo {volume} {8085},\ \bibinfo {editor} {edited by\
  \bibinfo {editor} {\bibfnamefont {F.}~\bibnamefont {Nielsen}}\ and\ \bibinfo
  {editor} {\bibfnamefont {F.}~\bibnamefont {Barbaresco}}}\ (\bibinfo
  {publisher} {Springer, Heidelberg},\ \bibinfo {year} {2013})\ pp.\ \bibinfo
  {pages} {5--36},\ \bibinfo {note} {first International Conference, GSI 2013
  Paris, France, August 28-30, 2013 Proceedings}\BibitemShut {NoStop}%
\bibitem [{\citenamefont {Ay}\ \emph {et~al.}(2017{\natexlab{b}})\citenamefont
  {Ay}, \citenamefont {Jost}, \citenamefont {L\^e},\ and\ \citenamefont
  {Schwachh\"ofer}}]{ay|jost|le|schwachhofer:2017}%
  \BibitemOpen
  \bibfield  {author} {\bibinfo {author} {\bibfnamefont {N.}~\bibnamefont
  {Ay}}, \bibinfo {author} {\bibfnamefont {J.}~\bibnamefont {Jost}}, \bibinfo
  {author} {\bibfnamefont {H.~V.}\ \bibnamefont {L\^e}},\ and\ \bibinfo
  {author} {\bibfnamefont {L.}~\bibnamefont {Schwachh\"ofer}},\ }\href@noop {}
  {\emph {\bibinfo {title} {Information geometry}}},\ \bibinfo {series}
  {Ergebnisse der Mathematik und ihrer Grenzgebiete. 3. Folge. A Series of
  Modern Surveys in Mathematics [Results in Mathematics and Related Areas. 3rd
  Series. A Series of Modern Surveys in Mathematics]}, Vol.~\bibinfo {volume}
  {64}\ (\bibinfo  {publisher} {Springer, Cham},\ \bibinfo {year} {2017})\ pp.\
  \bibinfo {pages} {xi+407}\BibitemShut {NoStop}%
\bibitem [{\citenamefont {Montrucchio}\ and\ \citenamefont
  {Pistone}(2019)}]{montrucchio|pistone:2019-arXiv:1905.07547}%
  \BibitemOpen
  \bibfield  {author} {\bibinfo {author} {\bibfnamefont {L.}~\bibnamefont
  {Montrucchio}}\ and\ \bibinfo {author} {\bibfnamefont {G.}~\bibnamefont
  {Pistone}},\ }\bibfield  {title} {\bibinfo {title} {Kantorovich distance on a
  weighted graph}} (\bibinfo {year} {2019}),\ \bibinfo {note} {arXiv:1905.07547
  [math.PR]}\BibitemShut {NoStop}%
\bibitem [{\citenamefont {Ambrosio}\ \emph {et~al.}(2008)\citenamefont
  {Ambrosio}, \citenamefont {Gigli},\ and\ \citenamefont
  {Savar\'e}}]{ambrosio|gigli|savare:2008}%
  \BibitemOpen
  \bibfield  {author} {\bibinfo {author} {\bibfnamefont {L.}~\bibnamefont
  {Ambrosio}}, \bibinfo {author} {\bibfnamefont {N.}~\bibnamefont {Gigli}},\
  and\ \bibinfo {author} {\bibfnamefont {G.}~\bibnamefont {Savar\'e}},\
  }\href@noop {} {\emph {\bibinfo {title} {Gradient flows in metric spaces and
  in the space of probability measures}}},\ \bibinfo {edition} {2nd}\ ed.,\
  Lectures in Mathematics ETH Zürich\ (\bibinfo  {publisher} {Birkhäuser
  Verlag, Basel},\ \bibinfo {year} {2008})\ pp.\ \bibinfo {pages}
  {x+334}\BibitemShut {NoStop}%
\bibitem [{\citenamefont {Amari}(1985)}]{amari:85}%
  \BibitemOpen
  \bibfield  {author} {\bibinfo {author} {\bibfnamefont {S.}~\bibnamefont
  {Amari}},\ }\href@noop {} {\emph {\bibinfo {title} {Differential-geometrical
  methods in statistics}}},\ \bibinfo {series} {Lecture Notes in Statistics},
  Vol.~\bibinfo {volume} {28}\ (\bibinfo  {publisher} {Springer-Verlag},\
  \bibinfo {year} {1985})\ pp.\ \bibinfo {pages} {v+290}\BibitemShut {NoStop}%
\bibitem [{\citenamefont {Lauritzen}(1987)}]{lauritzen:87ims}%
  \BibitemOpen
  \bibfield  {author} {\bibinfo {author} {\bibfnamefont {S.~L.}\ \bibnamefont
  {Lauritzen}},\ }\bibinfo {title} {Differential geometry in statistical
  inference}\ (\bibinfo  {publisher} {Institute of Mathematical Statistics},\
  \bibinfo {address} {New York, NY, USA},\ \bibinfo {year} {1987})\ Chap.\
  \bibinfo {chapter} {Statistical Manifolds}, pp.\ \bibinfo {pages}
  {163--216}\BibitemShut {NoStop}%
\bibitem [{\citenamefont {Murray}\ and\ \citenamefont
  {Rice}(1993)}]{murray|rice:93}%
  \BibitemOpen
  \bibfield  {author} {\bibinfo {author} {\bibfnamefont {M.~K.}\ \bibnamefont
  {Murray}}\ and\ \bibinfo {author} {\bibfnamefont {J.~W.}\ \bibnamefont
  {Rice}},\ }\href@noop {} {\emph {\bibinfo {title} {Differential Geometry and
  Statistics}}},\ \bibinfo {series} {Monographs on Statistics and Applied
  Probability}\ No.~\bibinfo {number} {48}\ (\bibinfo  {publisher} {Chapman \&
  Hall},\ \bibinfo {year} {1993})\BibitemShut {NoStop}%
\bibitem [{\citenamefont {Kass}\ and\ \citenamefont
  {Vos}(1997)}]{kass|vos:1997}%
  \BibitemOpen
  \bibfield  {author} {\bibinfo {author} {\bibfnamefont {R.~E.}\ \bibnamefont
  {Kass}}\ and\ \bibinfo {author} {\bibfnamefont {P.~W.}\ \bibnamefont {Vos}},\
  }\href {https://doi.org/10.1002/9781118165980} {\emph {\bibinfo {title}
  {Geometrical foundations of asymptotic inference}}},\ Wiley Series in
  Probability and Statistics: Probability and Statistics\ (\bibinfo
  {publisher} {John Wiley \& Sons, Inc., New York},\ \bibinfo {year} {1997})\
  pp.\ \bibinfo {pages} {xii+355},\ \bibinfo {note} {a Wiley-Interscience
  Publication}\BibitemShut {NoStop}%
\bibitem [{\citenamefont {Gibilisco}\ and\ \citenamefont
  {Pistone}(1998)}]{gibilisco|pistone:98}%
  \BibitemOpen
  \bibfield  {author} {\bibinfo {author} {\bibfnamefont {P.}~\bibnamefont
  {Gibilisco}}\ and\ \bibinfo {author} {\bibfnamefont {G.}~\bibnamefont
  {Pistone}},\ }\bibfield  {title} {\bibinfo {title} {Connections on
  non-parametric statistical manifolds by {O}rlicz space geometry},\
  }\href@noop {} {\bibfield  {journal} {\bibinfo  {journal} {IDAQP}\ }\textbf
  {\bibinfo {volume} {1}},\ \bibinfo {pages} {325} (\bibinfo {year}
  {1998})}\BibitemShut {NoStop}%
\bibitem [{\citenamefont {L\^{e}}(2017)}]{le:2017AISM}%
  \BibitemOpen
  \bibfield  {author} {\bibinfo {author} {\bibfnamefont {H.~V.}\ \bibnamefont
  {L\^{e}}},\ }\bibfield  {title} {\bibinfo {title} {The uniqueness of the
  {F}isher metric as information metric},\ }\href
  {https://doi.org/10.1007/s10463-016-0562-0} {\bibfield  {journal} {\bibinfo
  {journal} {Ann. Inst. Statist. Math.}\ }\textbf {\bibinfo {volume} {69}},\
  \bibinfo {pages} {879} (\bibinfo {year} {2017})}\BibitemShut {NoStop}%
\bibitem [{\citenamefont {Pistone}\ \emph {et~al.}(2001)\citenamefont
  {Pistone}, \citenamefont {Riccomagno},\ and\ \citenamefont
  {Wynn}}]{pistone|riccomagno|wynn:2001}%
  \BibitemOpen
  \bibfield  {author} {\bibinfo {author} {\bibfnamefont {G.}~\bibnamefont
  {Pistone}}, \bibinfo {author} {\bibfnamefont {E.}~\bibnamefont
  {Riccomagno}},\ and\ \bibinfo {author} {\bibfnamefont {H.~P.}\ \bibnamefont
  {Wynn}},\ }\href@noop {} {\emph {\bibinfo {title} {Algebraic statistics:
  Computational commutative algebra in statistics}}},\ \bibinfo {series}
  {Monographs on Statistics and Applied Probability}, Vol.~\bibinfo {volume}
  {89}\ (\bibinfo  {publisher} {Chapman \& Hall/CRC, Boca Raton, FL},\ \bibinfo
  {year} {2001})\ pp.\ \bibinfo {pages} {xvii+160}\BibitemShut {NoStop}%
\bibitem [{\citenamefont {Pachter}\ and\ \citenamefont
  {Sturmfels}(2005)}]{pachter|sturmfels:2005}%
  \BibitemOpen
  \bibinfo {editor} {\bibfnamefont {L.}~\bibnamefont {Pachter}}\ and\ \bibinfo
  {editor} {\bibfnamefont {B.}~\bibnamefont {Sturmfels}},\ eds.,\ \href@noop {}
  {\emph {\bibinfo {title} {Algebraic Statistics for Computational Biology}}}\
  (\bibinfo  {publisher} {Cambridge University Press},\ \bibinfo {year}
  {2005})\BibitemShut {NoStop}%
\bibitem [{\citenamefont {Drton}\ \emph {et~al.}(2009)\citenamefont {Drton},
  \citenamefont {Sturmfels},\ and\ \citenamefont
  {Sullivant}}]{drton|sturmfels|sullivan:2009}%
  \BibitemOpen
  \bibfield  {author} {\bibinfo {author} {\bibfnamefont {M.}~\bibnamefont
  {Drton}}, \bibinfo {author} {\bibfnamefont {B.}~\bibnamefont {Sturmfels}},\
  and\ \bibinfo {author} {\bibfnamefont {S.}~\bibnamefont {Sullivant}},\ }\href
  {https://doi.org/10.1007/978-3-7643-8905-5} {\emph {\bibinfo {title}
  {Lectures on algebraic statistics}}},\ \bibinfo {series} {Oberwolfach
  Seminars}, Vol.~\bibinfo {volume} {39}\ (\bibinfo  {publisher} {Birkhäuser
  Verlag},\ \bibinfo {year} {2009})\ pp.\ \bibinfo {pages}
  {viii+171}\BibitemShut {NoStop}%
\bibitem [{\citenamefont {Watanabe}(2009)}]{watanabe:2009}%
  \BibitemOpen
  \bibfield  {author} {\bibinfo {author} {\bibfnamefont {S.}~\bibnamefont
  {Watanabe}},\ }\href {https://doi.org/10.1017/CBO9780511800474} {\emph
  {\bibinfo {title} {Algebraic geometry and statistical learning theory}}},\
  \bibinfo {series} {Cambridge Monographs on Applied and Computational
  Mathematics}, Vol.~\bibinfo {volume} {25}\ (\bibinfo  {publisher} {Cambridge
  University Press, Cambridge},\ \bibinfo {year} {2009})\ pp.\ \bibinfo {pages}
  {viii+286}\BibitemShut {NoStop}%
\bibitem [{\citenamefont {Aoki}\ \emph {et~al.}(2012)\citenamefont {Aoki},
  \citenamefont {Hara},\ and\ \citenamefont
  {Takemura}}]{aoki|hara|takemura:2012}%
  \BibitemOpen
  \bibfield  {author} {\bibinfo {author} {\bibfnamefont {S.}~\bibnamefont
  {Aoki}}, \bibinfo {author} {\bibfnamefont {H.}~\bibnamefont {Hara}},\ and\
  \bibinfo {author} {\bibfnamefont {A.}~\bibnamefont {Takemura}},\ }\href
  {https://doi.org/10.1007/978-1-4614-3719-2} {\emph {\bibinfo {title} {Markov
  bases in algebraic statistics}}},\ Springer Series in Statistics\ (\bibinfo
  {publisher} {Springer, New York},\ \bibinfo {year} {2012})\ pp.\ \bibinfo
  {pages} {xii+298}\BibitemShut {NoStop}%
\bibitem [{\citenamefont {Zwiernik}(2016)}]{zwiernick:2016}%
  \BibitemOpen
  \bibfield  {author} {\bibinfo {author} {\bibfnamefont {P.}~\bibnamefont
  {Zwiernik}},\ }\href@noop {} {\emph {\bibinfo {title} {Semialgebraic
  statistics and latent tree models}}},\ \bibinfo {series} {Monographs on
  Statistics and Applied Probability}, Vol.\ \bibinfo {volume} {146}\ (\bibinfo
   {publisher} {Chapman \& Hall/CRC, Boca Raton, FL},\ \bibinfo {year} {2016})\
  pp.\ \bibinfo {pages} {xx+225}\BibitemShut {NoStop}%
\bibitem [{\citenamefont {Sullivan}(2018)}]{sullivan:2018book}%
  \BibitemOpen
  \bibfield  {author} {\bibinfo {author} {\bibfnamefont {S.}~\bibnamefont
  {Sullivan}},\ }\href@noop {} {\emph {\bibinfo {title} {Algebraic
  Statistics}}},\ \bibinfo {series} {Graduate Studies in Mathematics}\ No.\
  \bibinfo {number} {194}\ (\bibinfo  {publisher} {AMS},\ \bibinfo {year}
  {2018})\BibitemShut {NoStop}%
\bibitem [{\citenamefont {Gibilisco}\ \emph {et~al.}(2010)\citenamefont
  {Gibilisco}, \citenamefont {Riccomagno}, \citenamefont {Rogantin},\ and\
  \citenamefont {Wynn}}]{gibilisco|riccomagno|rogantin|wynn:2010}%
  \BibitemOpen
  \bibinfo {editor} {\bibfnamefont {P.}~\bibnamefont {Gibilisco}}, \bibinfo
  {editor} {\bibfnamefont {E.}~\bibnamefont {Riccomagno}}, \bibinfo {editor}
  {\bibfnamefont {M.~P.}\ \bibnamefont {Rogantin}},\ and\ \bibinfo {editor}
  {\bibfnamefont {H.~P.}\ \bibnamefont {Wynn}},\ eds.,\ \href@noop {} {\emph
  {\bibinfo {title} {Algebraic and geometric methods in statistics}}}\
  (\bibinfo  {publisher} {Cambridge University Press},\ \bibinfo {year}
  {2010})\ pp.\ \bibinfo {pages} {xvi+430}\BibitemShut {NoStop}%
\bibitem [{\citenamefont {Malag\`o}\ \emph {et~al.}(2009)\citenamefont
  {Malag\`o}, \citenamefont {Matteucci},\ and\ \citenamefont
  {Pistone}}]{malago|matteucci|pistone:2009NIPS}%
  \BibitemOpen
  \bibfield  {author} {\bibinfo {author} {\bibfnamefont {L.}~\bibnamefont
  {Malag\`o}}, \bibinfo {author} {\bibfnamefont {M.}~\bibnamefont
  {Matteucci}},\ and\ \bibinfo {author} {\bibfnamefont {G.}~\bibnamefont
  {Pistone}},\ }\href@noop {} {\bibinfo {title} {Stochastic relaxation as a
  unifying approach in 0/1 programming}} (\bibinfo {year} {2009}),\ \bibinfo
  {note} {{NIPS} 2009 Workshop on Discrete Optimization in Machine Learning:
  Submodularity, Sparsity \& Polyhedra (DISCML), December 11-12, 2009, Whistler
  Resort \& Spa, Canada}\BibitemShut {NoStop}%
\bibitem [{\citenamefont {Malag\`o}\ \emph
  {et~al.}(2011{\natexlab{a}})\citenamefont {Malag\`o}, \citenamefont
  {Matteucci},\ and\ \citenamefont
  {Pistone}}]{malago|matteucci|pistone:2011CEC}%
  \BibitemOpen
  \bibfield  {author} {\bibinfo {author} {\bibfnamefont {L.}~\bibnamefont
  {Malag\`o}}, \bibinfo {author} {\bibfnamefont {M.}~\bibnamefont
  {Matteucci}},\ and\ \bibinfo {author} {\bibfnamefont {G.}~\bibnamefont
  {Pistone}},\ }\bibfield  {title} {\bibinfo {title} {Stochastic natural
  gradient descent by estimation of empirical covariances.},\ }in\ \href
  {https://doi.org/10.1109/CEC.2011.5949720} {\emph {\bibinfo {booktitle} {IEEE
  Congress on Evolutionary Computation}}}\ (\bibinfo  {publisher} {IEEE},\
  \bibinfo {year} {2011})\ pp.\ \bibinfo {pages} {949--956}\BibitemShut
  {NoStop}%
\bibitem [{\citenamefont {Malag\`o}\ \emph
  {et~al.}(2011{\natexlab{b}})\citenamefont {Malag\`o}, \citenamefont
  {Matteucci},\ and\ \citenamefont
  {Pistone}}]{malago|matteucci|pistone:2011FOGA}%
  \BibitemOpen
  \bibfield  {author} {\bibinfo {author} {\bibfnamefont {L.}~\bibnamefont
  {Malag\`o}}, \bibinfo {author} {\bibfnamefont {M.}~\bibnamefont
  {Matteucci}},\ and\ \bibinfo {author} {\bibfnamefont {G.}~\bibnamefont
  {Pistone}},\ }\bibfield  {title} {\bibinfo {title} {Towards the geometry of
  estimation of distribution algorithms based on the exponential family},\ }in\
  \href@noop {} {\emph {\bibinfo {booktitle} {Proceedings of the 11th workshop
  on Foundations of genetic algorithms}}},\ \bibinfo {series and number} {FOGA
  '11}\ (\bibinfo  {publisher} {ACM},\ \bibinfo {address} {New York, NY, USA},\
  \bibinfo {year} {2011})\ pp.\ \bibinfo {pages} {230--242}\BibitemShut
  {NoStop}%
\bibitem [{\citenamefont {Malag\`o}\ \emph {et~al.}(2013)\citenamefont
  {Malag\`o}, \citenamefont {Matteucci},\ and\ \citenamefont
  {Pistone}}]{malago|matteucci|pistone:2013CEC}%
  \BibitemOpen
  \bibfield  {author} {\bibinfo {author} {\bibfnamefont {L.}~\bibnamefont
  {Malag\`o}}, \bibinfo {author} {\bibfnamefont {M.}~\bibnamefont
  {Matteucci}},\ and\ \bibinfo {author} {\bibfnamefont {G.}~\bibnamefont
  {Pistone}},\ }\bibfield  {title} {\bibinfo {title} {Natural gradient, fitness
  modelling and model selection: A unifying perspective},\ }in\ \href@noop {}
  {\emph {\bibinfo {booktitle} {IEEE Congress on Evolutionary Computation}}}\
  (\bibinfo  {publisher} {IEEE},\ \bibinfo {year} {2013})\ pp.\ \bibinfo
  {pages} {486--493}\BibitemShut {NoStop}%
\bibitem [{\citenamefont {Pistone}(2013{\natexlab{b}})}]{pistone:2013Entropy}%
  \BibitemOpen
  \bibfield  {author} {\bibinfo {author} {\bibfnamefont {G.}~\bibnamefont
  {Pistone}},\ }\bibfield  {title} {\bibinfo {title} {Examples of the
  application of nonparametric information geometry to statistical physics},\
  }\href {https://doi.org/10.3390/e15104042} {\bibfield  {journal} {\bibinfo
  {journal} {Entropy}\ }\textbf {\bibinfo {volume} {15}},\ \bibinfo {pages}
  {4042} (\bibinfo {year} {2013}{\natexlab{b}})}\BibitemShut {NoStop}%
\bibitem [{\citenamefont {Lods}\ and\ \citenamefont
  {Pistone}(2015)}]{lods|pistone:2015}%
  \BibitemOpen
  \bibfield  {author} {\bibinfo {author} {\bibfnamefont {B.}~\bibnamefont
  {Lods}}\ and\ \bibinfo {author} {\bibfnamefont {G.}~\bibnamefont {Pistone}},\
  }\bibfield  {title} {\bibinfo {title} {Information geometry formalism for the
  spatially homogeneous {B}oltzmann equation},\ }\href@noop {} {\bibfield
  {journal} {\bibinfo  {journal} {Entropy}\ }\textbf {\bibinfo {volume} {17}},\
  \bibinfo {pages} {4323} (\bibinfo {year} {2015})}\BibitemShut {NoStop}%
\bibitem [{\citenamefont {Pistone}\ and\ \citenamefont
  {Rogantin}(2015)}]{pistone|rogantin:2015.1502.06718}%
  \BibitemOpen
  \bibfield  {author} {\bibinfo {author} {\bibfnamefont {G.}~\bibnamefont
  {Pistone}}\ and\ \bibinfo {author} {\bibfnamefont {M.~P.}\ \bibnamefont
  {Rogantin}},\ }\bibfield  {title} {\bibinfo {title} {The gradient flow of the
  polarization measure. with an appendix}} (\bibinfo {year} {2015}),\ \bibinfo
  {note} {arXiv:1502.06718}\BibitemShut {NoStop}%
\bibitem [{\citenamefont {Aitchison}(1986)}]{aitchinson:1986}%
  \BibitemOpen
  \bibfield  {author} {\bibinfo {author} {\bibfnamefont {J.}~\bibnamefont
  {Aitchison}},\ }\href {https://doi.org/10.1007/978-94-009-4109-0} {\emph
  {\bibinfo {title} {The statistical analysis of compositional data}}},\
  Monographs on Statistics and Applied Probability\ (\bibinfo  {publisher}
  {Chapman \& Hall, London},\ \bibinfo {year} {1986})\ pp.\ \bibinfo {pages}
  {xvi+416}\BibitemShut {NoStop}%
\bibitem [{\citenamefont {Amari}(1998)}]{amari:1998natural}%
  \BibitemOpen
  \bibfield  {author} {\bibinfo {author} {\bibfnamefont {S.-I.}\ \bibnamefont
  {Amari}},\ }\bibfield  {title} {\bibinfo {title} {Natural gradient works
  efficiently in learning},\ }\href
  {https://doi.org/10.1162/089976698300017746} {\bibfield  {journal} {\bibinfo
  {journal} {Neural Computation}\ }\textbf {\bibinfo {volume} {10}},\ \bibinfo
  {pages} {251} (\bibinfo {year} {1998})}\BibitemShut {NoStop}%
\bibitem [{\citenamefont {Tsallis}(1988)}]{tsallis:1988}%
  \BibitemOpen
  \bibfield  {author} {\bibinfo {author} {\bibfnamefont {C.}~\bibnamefont
  {Tsallis}},\ }\bibfield  {title} {\bibinfo {title} {Possible generalization
  of {B}oltzmann-{G}ibbs statistics},\ }\href@noop {} {\bibfield  {journal}
  {\bibinfo  {journal} {J. Statist. Phys.}\ }\textbf {\bibinfo {volume} {52}},\
  \bibinfo {pages} {479} (\bibinfo {year} {1988})}\BibitemShut {NoStop}%
\bibitem [{\citenamefont {Naudts}(2011)}]{naudts:2011GTh}%
  \BibitemOpen
  \bibfield  {author} {\bibinfo {author} {\bibfnamefont {J.}~\bibnamefont
  {Naudts}},\ }\href {https://doi.org/10.1007/978-0-85729-355-8} {\emph
  {\bibinfo {title} {Generalised thermostatistics}}}\ (\bibinfo  {publisher}
  {Springer-Verlag London Ltd.},\ \bibinfo {year} {2011})\ pp.\ \bibinfo
  {pages} {x+201}\BibitemShut {NoStop}%
\bibitem [{\citenamefont {Montrucchio}\ and\ \citenamefont
  {Pistone}(2017)}]{montrucchio|pistone:2017}%
  \BibitemOpen
  \bibfield  {author} {\bibinfo {author} {\bibfnamefont {L.}~\bibnamefont
  {Montrucchio}}\ and\ \bibinfo {author} {\bibfnamefont {G.}~\bibnamefont
  {Pistone}},\ }\bibfield  {title} {\bibinfo {title} {Deformed exponential
  bundle: the linear growth case},\ }in\ \href@noop {} {\emph {\bibinfo
  {booktitle} {{G}eometric {S}cience of {I}nformation}}},\ \bibinfo {series and
  number} {\bibinfo {series} {LNCS}\ No.\ \bibinfo {number} {10589}},\ \bibinfo
  {editor} {edited by\ \bibinfo {editor} {\bibfnamefont {F.}~\bibnamefont
  {Nielsen}}\ and\ \bibinfo {editor} {\bibfnamefont {F.}~\bibnamefont
  {Barbaresco}}}\ (\bibinfo  {publisher} {Springer},\ \bibinfo {year} {2017})\
  pp.\ \bibinfo {pages} {239--246},\ \bibinfo {note} {third International
  Conference, GSI 2017, Paris, France, November 7-9, 2017,
  Proceedings}\BibitemShut {NoStop}%
\bibitem [{\citenamefont
  {Kaniadakis}(2001{\natexlab{a}})}]{kaniadakis:2001PhA}%
  \BibitemOpen
  \bibfield  {author} {\bibinfo {author} {\bibfnamefont {G.}~\bibnamefont
  {Kaniadakis}},\ }\bibfield  {title} {\bibinfo {title} {Non-linear kinetics
  underlying generalized statistics},\ }\href@noop {} {\bibfield  {journal}
  {\bibinfo  {journal} {Physica A}\ }\textbf {\bibinfo {volume} {296}},\
  \bibinfo {pages} {405} (\bibinfo {year} {2001}{\natexlab{a}})}\BibitemShut
  {NoStop}%
\bibitem [{\citenamefont
  {Kaniadakis}(2001{\natexlab{b}})}]{kaniadakis:2001PhLA}%
  \BibitemOpen
  \bibfield  {author} {\bibinfo {author} {\bibfnamefont {G.}~\bibnamefont
  {Kaniadakis}},\ }\bibfield  {title} {\bibinfo {title} {H-theorem and
  generalized entropies within the framework of nonlinear kinetics},\
  }\href@noop {} {\bibfield  {journal} {\bibinfo  {journal} {Physics Letters
  A}\ }\textbf {\bibinfo {volume} {288}},\ \bibinfo {pages} {283} (\bibinfo
  {year} {2001}{\natexlab{b}})}\BibitemShut {NoStop}%
\bibitem [{\citenamefont {Newton}(2012)}]{newton:2012}%
  \BibitemOpen
  \bibfield  {author} {\bibinfo {author} {\bibfnamefont {N.~J.}\ \bibnamefont
  {Newton}},\ }\bibfield  {title} {\bibinfo {title} {An infinite-dimensional
  statistical manifold modelled on {H}ilbert space},\ }\href
  {https://doi.org/10.1016/j.jfa.2012.06.007} {\bibfield  {journal} {\bibinfo
  {journal} {J. Funct. Anal.}\ }\textbf {\bibinfo {volume} {263}},\ \bibinfo
  {pages} {1661} (\bibinfo {year} {2012})}\BibitemShut {NoStop}%
\bibitem [{\citenamefont {Eguchi}(2005)}]{eguchi:2005igaia}%
  \BibitemOpen
  \bibfield  {author} {\bibinfo {author} {\bibfnamefont {S.}~\bibnamefont
  {Eguchi}},\ }\bibfield  {title} {\bibinfo {title} {Tubular modelling approach
  to statistical method for observational studies}} (\bibinfo {year} {2005}),\
  \bibinfo {note} {2nd International Symposium on Information Geometry and its
  Applications Tokyo}\BibitemShut {NoStop}%
\bibitem [{\citenamefont {Burdet}\ \emph {et~al.}(2001)\citenamefont {Burdet},
  \citenamefont {Combe},\ and\ \citenamefont
  {Nencka}}]{burdet|combe|nencka:2001}%
  \BibitemOpen
  \bibfield  {author} {\bibinfo {author} {\bibfnamefont {G.}~\bibnamefont
  {Burdet}}, \bibinfo {author} {\bibfnamefont {P.}~\bibnamefont {Combe}},\ and\
  \bibinfo {author} {\bibfnamefont {H.}~\bibnamefont {Nencka}},\ }\bibfield
  {title} {\bibinfo {title} {On real {H}ilbertian info-manifolds},\ }in\
  \href@noop {} {\emph {\bibinfo {booktitle} {Disordered and complex systems
  ({L}ondon, 2000)}}},\ \bibinfo {series} {AIP Conf. Proc.}, Vol.\ \bibinfo
  {volume} {553}\ (\bibinfo  {publisher} {Amer. Inst. Phys.},\ \bibinfo {year}
  {2001})\ pp.\ \bibinfo {pages} {153--158}\BibitemShut {NoStop}%
\end{thebibliography}
\end{document}